\newtheorem{theorem}{Theorem}[section]
\newtheorem{lemma}{Lemma}[section]
\newtheorem{proposition}{Proposition}[section]
\newtheorem{definition}{Definition}[section]
\newtheorem{remark}{Remark}[section]
\theoremstyle{definition} \theoremstyle{remark}
\numberwithin{equation}{section}
\begin{document}

\markboth{Y.Z. Yang, Y. Zhou et al}{Time-space fractional Schr\"{o}dinger equation on $\mathbb{R}^{d}$}

\date{}

\baselineskip 0.22in

\title{\bf On the well-posedness of time-space fractional Schr\"{o}dinger equation on $\mathbb{R}^{d}$}

\author{ Yong Zhen Yang$^1$, Yong Zhou$^{1,2}$\\[1.8mm]
\footnotesize {Correspondence: yozhou@must.edu.mo}\\
\footnotesize  {$^{1}$ Faculty of Mathematics and Computational Science, Xiangtan University}\\
\footnotesize  {Hunan 411105, P.R. China}\\[1.5mm]
\footnotesize {$^2$ Macao Centre for Mathematical Sciences, Macau University of Science and Technology}\\
\footnotesize {Macau 999078, P.R. China}\\[1.5mm]
}

\maketitle

\begin{abstract}
This paper considers the well-posedness of a class of time-space fractional Schr\"{o}dinger equations introduced by Naber. In contrast to the classical Schr\"{o}dinger equation, the solution operator here exhibits derivative loss and lacks the structure of a semigroup, which makes the classical Strichartz estimates inapplicable. By using harmonic analysis tools---including the smoothing effect theory of Kenig and Ponce for Korteweg-de Vries equations \cite[\emph{Commun.~Pure Appl.~Math.}]{Kenig}, real interpolation techniques, and the Van der Corput lemma---we establish novel dispersive estimates for the solution operator. These estimates generalize Ponce's regularity results \cite[\emph{J.~Funct.~Anal.}]{Ponce} for oscillatory integrals and enable us to address the derivative loss in the Schr\"{o}dinger kernel.  For the cases $\beta<2$~(in one space dimension) and $\beta>2$~(in higher dimensions), we prove local and global well-posedness in Sobolev and Lorentz-type spaces, respectively. Additionally, we analyze the asymptotic behavior of solutions and demonstrate the existence of self-similar solutions under homogeneous initial data. The results highlight the interplay between fractional derivatives, dispersive properties, and nonlinear dynamics, extending the understanding of nonlocal evolution equations in quantum mechanics and related fields.\\ [2mm]
{\bf MSC:} 26A33; 3408A.\\
{\bf Keywords:} time-space fractional Schr\"{o}dinger equation; dispersive estimate; global/local well-posedness
\end{abstract}

\baselineskip 0.25in

\section{Introduction}
Theoretical and numerical experiments have shown that fractional calculus has significant advantages over traditional integer derivatives in describing the historical dependence of evolutionary processes in complex systems~\cite{Podlubny}. In recent decades, time-space fractional partial differential equations (TSFPDEs) have attracted considerable attention due to their nonlocal effects in both time and space, which make them widely applicable in the real world. In marine settings, the boundary layer surrounding islands can be viewed as an area where discontinuous dynamical behaviors display fractal properties in both spatial and temporal dimensions. This complexity across space and time has motivated our adoption of fractional derivatives in both directions to depict sticky paths on fractal support sets~\cite{Saichev}. In finance, considering the non-Markovian and non-local properties of financial time series, TSFPDEs can accurately characterise the evolution of economic variables over time, which better matches the actual behaviour of returns in financial markets~\cite{Scalas}. More research on fractional partial differential equations can be found in~\cite{A.A.Kilbas,Dong,Y zhou2,Y.Z. Yang,He1,He2}.

The Schr\"{o}dinger equation is one of the cornerstones of quantum mechanics, providing a mathematical framework for understanding the properties of atoms, molecules, and other microscopic particles. The theory of nonlinear Schr\"{o}dinger equations has attracted significant attention from researchers, including topics such as dispersive estimates, Strichartz estimates, and smoothing effects. For references, see \cite{Keel,Nakamura1,Nakamura2,Kenig2}. The nonlinear fractional Schr\"{o}dinger equation, as a generalization of the Schr\"{o}dinger, has important research significance. Laskin \cite{Laskin} considered replacing Brownian paths with L\'{e}vy stable paths in the Feynman path integral technique, leading to the space fractional Schr\"{o}dinger equation
\[
i\partial_{t}w = (-\Delta)^{\frac{\beta}{2}}w + g(w),
\]
where the Markov property of the solution is still preserved. Achar \cite{Achar} used the Feynman path integral technique to derive the time fractional Schr\"{o}dinger equation:
\[
i\partial_{t}^{\alpha}w + \Delta w = g(w).
\]
Naber \cite{Naber} employed the technique of core rotation to change the exponent of \(i\) to \(i^{\alpha}\), providing a physical interpretation, and obtained the following time fractional Schr\"{o}dinger equation:
\[
i^{\alpha}\partial_{t}^{\alpha}w + \Delta w = g(w).
\]
It was later proven that this equation is a classical Schr\"{o}dinger equation with a time-dependent Hamiltonian, making it more suitable for non-stationary quantum problems.

If we fractionalize both the time derivative and the space derivative, we can obtain the following space-time fractional Schr\"{o}dinger equation:
\begin{align}\label{I type}
i^{\mu}\partial^{\alpha}_{t}w - (-\Delta)^{\frac{\beta}{2}}w = g(w), \quad 0 < \alpha < 1, \beta > 0, \quad \mu = \big\{1,\alpha\big\}.
\end{align}
The cases where $\mu = 1$ and $\mu = \alpha$ are the generalised forms of the Schr\"{o}dinger equation studied by Achar \cite{Achar} and Naber \cite{Naber} respectively. Many scientists have done profound research on the E.q.\eqref{I type} when $\mu = 1$. Banquet \cite{Banquet} studied the solvability of a class of space-time Schr\"{o}dinger systems based on E.q.\eqref{I type}. Su \cite{Su1} obtained dispersive estimates for the case $g=0$ by exploiting the asymptotic behaviour of the Mittag-Leffler function $E_{\alpha,1}(-it^{\alpha}|\xi|^{\beta})$. In particular, Su \cite{Su2} skilfully used the asymptotic properties of the H-Fox function to obtain $L^{p}-L^{r}$ estimates for the solution operator of the time-space Schr\"{o}dinger equation, thereby establishing local/global well-posedness in the space $C_{b}\left(0,T; L^{r}(\mathbb{R}^{d})\right) \cap L^{q}\left(0,T;L^{p}(\mathbb{R}^{d})\right)$. This was a significant improvement on previous work. However, research on the $\mu=\alpha$ type time-space Schr\"{o}dinger equation is relatively scarce. Lee \cite{Lee}, combining the asymptotic behavior of the Mittag-Leffler function $E_{\alpha,1}(z)$, studied the Strichartz estimates for E.q.\eqref{I type} when $g=0$, and revealed that the classical Strichartz estimates can be recovered as the ratio $\beta/\alpha \rightarrow 2$. Grande \cite{Grande} used a high-frequency and low-frequency decomposition method, combined with the smoothing effects theory used by Kenig in the study of Korteweg-de Vries equations, to investigate the local well-posedness and ill-posedness of E.q.\eqref{I type} for $0 < \beta < 2$ and $d=1$.

In this paper, we consider the following time-space fractional Schr\"{o}dinger equation:
\begin{align}\label{Eq:F-T-S-E}
    \begin{cases}
        i^{\alpha}\partial_{t}^{\alpha}w(t,x)-(-\Delta)^{\frac{\beta}{2}}w(t,x)+g(w(t,x))=0 & \text{in } (0,\infty) \times \mathbb{R}^{d}, \\
        w(0,x)=w_{0}(x) & \text{in } \mathbb{R}^{d},
    \end{cases}
\end{align}
where \( 0 < \alpha < 1 \), \( \beta > 0 \). The operator \( \partial_{t}^{\alpha} \) denotes the \(\alpha\)-Caputo derivative, and \( (-\Delta)^{\frac{\beta}{2}} \) (\( \beta > 0 \)) represents the fractional Laplace operator, which is defined in the sense of Fourier multipliers as $(-\Delta)^{\frac{\beta}{2}}f=\mathcal{F}^{-1}\big(|\xi|^{\beta}\widehat{f}(\xi)\big)$. The function \( g(w) \) represents the nonlinear term.

The study of the Schr\"{o}dinger equation with $\mu=\alpha$ is more challenging than the $\mu=1$ type, as the techniques used for the $\mu=1$ type are not applicable to the $\mu=\alpha$ type. The main reason is that when $z = -it^{\alpha}|x|^{\beta}$ approaches infinity along the ray $|\arg{z}| = \pi/2$, the solution operator of the equation of $\mu=1$ type exhibits polynomial or exponential decay. However, when $z = (-it)^{\alpha}|x|^{\beta}$ approaches infinity along the ray $|\arg{z}| = \alpha\pi/2$, the solution operator of the $\mu=\alpha$ type equation shows oscillatory behavior, which prevents the establishment of the $L^{p}-L^{r}$ estimates that can be obtained for the $\mu=1$ type equation (see \cite{Su2}). On the other hand, combining the asymptotic behavior of the Mittag-Leffler function, as $z \rightarrow \infty$,
\begin{align*}
E_{\alpha,\beta}(z) = \frac{1}{\alpha} \exp(z^{\frac{1}{\alpha}}) z^{\frac{1-\beta}{\alpha}} - \sum_{j=1}^{m} \frac{z^{-j}}{\Gamma(\beta-\alpha j)} + O(|z|^{-m-1}), \quad |\arg{z}| \leq \alpha\pi/2,
\end{align*}
and
\begin{align*}
E_{\alpha,\beta}(z) =- \sum_{j=1}^{m} \frac{z^{-j}}{\Gamma(\beta-\alpha j)} + O(|z|^{-m-1}), \quad \frac{\alpha\pi}{2}<|\arg{z}| \leq \pi.
\end{align*}
 In contrast to the \(\mu=1\) type space-time fractional Schr\"{o}dinger equation, the solution operator for the \(\mu=\alpha\) type space-time fractional Schr\"{o}dinger equation involves a derivative loss in the Schr\"{o}inger kernel $D^{\delta-\beta}\exp(-itD^{\beta})$, which does not belong to any H\"{o}rmander multiplier class. This is fundamentally different from the fractional heat kernel, making the H\"{o}rmander multiplier theorem used in \cite{Miao, Y.Z. Yang} inapplicable to the solution operators. This presents significant difficulties in establishing dispersive estimates for the solution operator. Inspired by \cite{Kenig, Grande, Lee, Ponce, Guozihua}, this paper bridges these gaps through a unified harmonic analysis framework. By synthesizing four key methodologies---
\begin{itemize}
    \item \textbf{Fractional heat-Schr\"{o}dinger kenels} for solution operator equivalent representation,
    \item \textbf{Smoothing effects} for derivative-loss operators via Kenig-Ponce theory \cite{Kenig},
    \item \textbf{Oscillatory integral estimates} generalizing Ponce's regularity results \cite{Ponce},
    \item \textbf{Dispersive-analytic techniques} for fractional heat-Schr\"{o}dinger kernels,
\end{itemize}
---we establish novel well-posedness theories for the nonlinear time-space fractional Schr\"{o}dinger equation \eqref{Eq:F-T-S-E} in both $\beta < 2$ and $\beta > 2$ regimes:
\begin{enumerate}
    \item \textbf{For $\beta > 2$ ($d \geq 1$)}: Global existence in Lorentz-type spaces $X_{p_0}^\kappa$ under small initial data, leveraging generalized dispersive estimates (Lemma \ref{Mittag-Leffer operator estimate}) and contraction mapping.
    \item \textbf{For $\beta < 2$ ($d = 1$)}: Local well-posedness in $C(0,T; H^s(\mathbb{R}))$ via frequency-localized smoothing effects (Lemmas \ref{smoothing effects 1}--\ref{interpolation estimate}), addressing derivative loss through Bourgain-type spaces.
    \item \textbf{Asymptotics and self-similarity}: Decay properties (Theorem \ref{asymptotic behavior}) and scale-invariant solutions (Theorem \ref{self-similar solution}) under homogeneous initial data, exploiting the kernel's fractional scaling.
\end{enumerate}
 These results significantly extend the mathematical understanding of nonlocal dispersive equations, particularly for equations lacking standard semigroup structures. The methodology developed here may apply to other fractional models with memory-driven dynamics.

This paper is organized as follows. In Section 2, we introduce some of the notations required for this paper, including fractional derivatives, Lorentz spaces, fractional Leibniz rule, and Van der Corput lemma etc. In Section 3, we prove some dispersive estimates for the solution operator of the E.q.\eqref{Eq:F-T-S-E}, which play a critical role in establishing the main results of this paper. In Section 4, we establish the global/local well-posedness of the E.q.\eqref{Eq:F-T-S-E} and analyze the asymptotic behavior and self-similar properties.

\section{Notations}
Some of the symbols, definitions, and lemmas used in this article are introduced in this section.

In this paper, $C$ denotes a general constant which may vary between different lines. The notation $f \lesssim g$ indicates that there exists a positive constant $C$ such that $f \leq C g$. Let $X$ be a Banach space, \( I \in \mathbb{R}^{+} \)~and denote $X^{\star}$ that its dual space. We denote \( C\big(I;X\big) \) as the space of \( X \)-valued continuous functions on \( I \), endowed with the norm $\big\|w\big\|_{C} = \sup_{t \in I} \big\|w(t)\big\|_{X}$. We denote \( L^{p}(I;X) \) as the space of \( L^{p} \)-integrable functions on \( I \), endowed with the norm \( \big\|w\big\|_{L^{p}(I;X)} \). Let \(\mathcal{S}(\mathbb{R}^{d})\) denotes the class of Schwartz functions, and \(\mathcal{S'}(\mathbb{R}^{d})\) stands for tempered distributions. Moreover, we denote $\dot{\mathcal{S}}(\mathbb{R}^{d})$ that is
$$
\dot{\mathcal{S}}(\mathbb{R}^{d})=\big\{f\in\mathcal{S}:\text{ for all multiindex }\gamma\in \mathbb{N}^{d}, \partial^{\gamma}f(0)=0\big\}.
$$
We denote the convolution operations with respect to the time variable \( t \) and the space variable \( x \) by \( * \) and \( \star \), respectively. The symbols \(\mathcal{L}\) and \(\mathcal{L}^{-1}\) respectively represent the Laplace transform and the inverse Laplace transform. Let \(\mathcal{F}\) and \(\mathcal{F}^{-1}\) respectively denote its Fourier transform and inverse Fourier transform, defined by
\[
\big(\mathcal{F}f\big)(\xi) = \widehat{f}(\xi)=\int_{\mathbb{R}^{d}}f(x)e^{-ix\cdot\xi}\,dx,
\]
and
\[
\big(\mathcal{F}^{-1}f\big)(x) =(2\pi)^{\frac{d}{2}} \check{f}(x) =\int_{\mathbb{R}^{d}}f(\xi)e^{ix\cdot\xi}\,d\xi.
\]
And by using dual method, the Fourier transform and Fourier inverse transform can extend to the tempered distribution, that is $\langle\mathcal{F}f,\phi\rangle=\langle f,\mathcal{F}\phi\rangle$,$\langle\mathcal{F}^{-1}f,\phi\rangle=\langle f,\mathcal{F}^{-1}\phi\rangle$ for $f\in\mathcal{S}'$ and any $\phi\in\mathcal{S}$.
Moreover, when $f$ is a measurable function on $\mathbb{R}^{d}$ with a bound on its polynomial growth at infinity, we describe the operator $f(\nabla)$ through the formula $f(\nabla)a \equiv \mathcal{F}^{-1}(f\mathcal{F}a)$ with $\nabla=\frac{\partial}{\partial_{x}}$. For convenience, we denote $D=|\nabla|=\nabla\circ \mathbb{H}$, where $\mathbb{H}$ is the Hirbert transform.

Thus, $e^{-t(-\Delta)^{\frac{\beta}{2}}}$ is a pseudo-differential operator quantized from the symbol $e^{-t|\xi|^{\beta}}$, and it can be defined as a convolution operator represented by the fractional heat kernel $S_{t}(x)$,
\begin{equation*}
e^{-t(-\Delta)^{\frac{\beta}{2}}}g(x) = \mathcal{F}^{-1}\left(e^{-t|\xi|^{\beta}}\mathcal{F}g\right)(x) = \mathcal{F}^{-1}(e^{-t|\xi|^{\beta}}) \star g(x) = S_{t} \star g(x),
\end{equation*}
where
\begin{equation*}
S_{t}(x) = \mathcal{F}^{-1}(e^{-t|\xi|^{\beta}})(x) = t^{-\frac{d}{\beta}}S\left(t^{-\frac{1}{\beta}}x\right), \; S(x) = (2\pi)^{-\frac{d}{2}}\int_{\mathbb{R}^{d}}e^{ix\xi}e^{-|\xi|^{\beta}}d\xi.
\end{equation*}
Similarly, for any $\theta \geq 0$, $(-\Delta)^{\frac{\theta}{2}}e^{-t(-\Delta)^{\frac{\beta}{2}}}$ is a pseudo-differential operator with the symbol $|\xi|^{\theta}e^{-t|\xi|^{\beta}}$, and can be defined as
\begin{equation*}
(-\Delta)^{\frac{\theta}{2}}e^{-t(-\Delta)^{\frac{\beta}{2}}}g(x) = \mathcal{F}^{-1}\left(|\xi|^{\theta}e^{-t|\xi|^{\beta}}\right) \star g(x) = S_{t,\theta} \star g(x),
\end{equation*}
where
\begin{equation*}
S_{t,\theta}(x) = t^{-\frac{\theta}{\beta}}t^{-\frac{d}{\beta}}S_{\theta}\left(t^{-\frac{1}{\beta}}x\right), \; S_{\theta}(x) = (2\pi)^{-\frac{d}{2}}\int_{\mathbb{R}^{d}}e^{ix\xi}|\xi|^{\theta}e^{-|\xi|^{\beta}}d\xi.
\end{equation*}
For further details on $e^{-t(-\Delta)^{\frac{\beta}{2}}}$ and $(-\Delta)^{\frac{\theta}{2}}e^{-t(-\Delta)^{\frac{\beta}{2}}}$, refer to \cite{Miao}.

For $1\leq p,q\leq\infty$, $g:[0,\infty)\rightarrow \mathbb{R}^{d}$, we denote that
$$
\big\|g\big\|_{L^{p}_{x}L^{q}_{T}}=\left(\int_{\mathbb{R}^{d}}\left(\int_{0}^{T}\big|g(t,x)\big|^{q}dt\right)^{\frac{p}{q}}dx
\right)^{\frac{1}{p}}
$$
and
$$
\big\|g\big\|_{L^{p}_{T}L^{q}_{x}}=\left(\int_{0}^{T}\left(\int_{\mathbb{R}^{d}}\big|g(t,x)\big|^{q}dx\right)^{\frac{p}{q}}dt
\right)^{\frac{1}{p}}
$$
and with $T=t$ to represent the case when $[0,T]=\mathbb{R}^{+}$.

We define the space $BMO_{x}L^{2}_{T}$ that is
$$
BMO_{x}L^{2}_{T}=\left\{f\in L_{loc}^{1}L^{2}_{T}/\frac{1}{|Q|}\int_{Q}\|f-f_{Q}\|_{2}dx<\infty, f_{Q}=\frac{1}{|Q|}\int_{Q}fdx,\text{ for any open ball }Q\in\mathbb{R}^{d}\right\}.
$$

From Kenig \cite{Kenig}, for any $\omega\in \mathbb{R}$, there exists the constant $C_{\omega}$ such that
\begin{align}\label{BMO estimate}
\big\|D^{i\omega}f\big\|_{BMO_{x}L^{2}_{t}}\leq C_{\omega}\big\|f\big\|_{L^{\infty}_{x}L^{2}_{t}},\quad f\in L^{\infty}_{x}L^{2}_{t}.
\end{align}

For \( s \in \mathbb{R} \), $1\leq p\leq\infty$, the homogeneous and inhomogeneous Sobolev spaces \( \dot{H}^{s}_{p}(\mathbb{R}^{d}) \) and \( H^{s}_{p}(\mathbb{R}^{d}) \) can be represented as

\begin{align*}
\dot{H}^{s}_{p}(\mathbb{R}^{d})=\left\{f\in\mathcal{S}'(\mathbb{R}^{d}):\mathcal{F}^{-1}\left(|\xi|^{s}\widehat{f}\right)\in L^{p}(\mathbb{R}^{d})\right\}
\end{align*}
and
\begin{align*}
H^{s}_{p}(\mathbb{R}^{d})=\left\{f\in\mathcal{S}'(\mathbb{R}^{d}):\mathcal{F}^{-1}\left(\big(1+|\xi|^{2}\big)^{\frac{s}{2}}
\widehat{f}\right)\in L^{p}(\mathbb{R}^{d})\right\}
\end{align*}
with the norms
\begin{align*}
\big\|f\big\|_{\dot{H}^{s}_{p}}=\big\|(-\Delta)^{\frac{s}{2}}f\big\|_{L^{p}(\mathbb{R}^{d})}
=\big\|D^{s}f\big\|_{L^{p}(\mathbb{R}^{d})}, \quad \big\|f\big\|_{H^{s}_{p}}=\big\|(I-\Delta)^{\frac{s}{2}}f\big\|_{L^{p}(\mathbb{R}^{d})}
=\big\|\langle D\rangle^{s} f\big\|_{L^{p}(\mathbb{R}^{d})}.
\end{align*}
In particular, for \( s > 0 \), we have
\[
H^{s}_{p}(\mathbb{R}^{d}) = L^{p}(\mathbb{R}^{d}) \cap \dot{H}^{s}_{p}(\mathbb{R}^{d}),
\]
and for any $\varepsilon>0$, $s\in\mathbb{R}$, we have the embedding
$$
H^{s+\varepsilon}_{p}(\mathbb{R}^{d})\hookrightarrow H^{s}_{p}(\mathbb{R}^{d}).
$$
Sometimes, we denote the Sobolev spaces \(H^{s}_{p}(\mathbb{R}^{d})\) and \(\dot{H}^{s}_{p}(\mathbb{R}^{d})\) by \(L^{p}_{-s}(\mathbb{R}^{d})\) and \(\dot{L}^{p}_{-s}(\mathbb{R}^{d})\), respectively.

Next we recall the Lorentz space $L^{p,q}(\mathbb{R}^{d})$, $1\leq p,q\leq\infty$, that is defined
$$
L^{p,q}(\mathbb{R}^{d})=\left\{g:\mathbb{R}^{d}\rightarrow\mathbb{C}:\left\|g\right\|_{L^{p,q}}<\infty\right\},
$$
where
\begin{align*}
\left\|g\right\|_{L^{p,q}}=
\begin{cases}
\left(\int_{0}^{\infty}\left(t^{\frac{1}{p}}g^{*}(t)\right)^{q}\frac{dt}{t}\right)^{\frac{1}{q}}\quad q<\infty,\\
\sup_{t>0}t^{\frac{1}{p}}g^{*}(t)\quad q=\infty.
\end{cases}
\end{align*}
and $g^{*}$ is the decreasing rearrangement function of $g$, can refer to \cite{Grafakos}. In particular, we have that $L^{p,p}(\mathbb{R}^{d})=L^{p}(\mathbb{R}^{d})$.
\begin{definition}
The Riemann-Liouville fractional integral with $0<\alpha<1$ is defined as
\[
{}_{0}J_{t}^{\alpha}\!f(t,x)=\frac{1}{\Gamma(\alpha)}\int_{0}^{t}(t-s)^{\alpha-1}f(s,x)ds=g_{\alpha}(t)*f(t,x),\quad f\in L^{1}\big(0,\infty;\mathcal{S}(\mathbb{R}^{d})\big),
\]
where $g_{\alpha}(t)=\frac{t^{\alpha-1}}{\Gamma(\alpha)}$.
\end{definition}
\begin{definition}
The Caputo fractional derivative with $0<\alpha<1$ can be  defined as
\begin{equation*}
\partial^{\alpha}_{t}f(t,x)=\frac{d}{dt}\left(g_{1-\alpha}(t)*(f(t,x)-f(0,x))\right),\quad f\in L^{1}\in L^{1}\big(0,\infty;\mathcal{S}(\mathbb{R}^{d})\big).
\end{equation*}
\end{definition}

Next, we introduce the Mittag-Leffler function $E_{\alpha,\beta}(z)$, which is defined as follow:
\begin{equation*}
E_{\alpha,\beta}(\rho) = \sum_{n=0}^{\infty} \frac{\rho^{n}}{\Gamma(\alpha n + \beta)}, \qquad \text{for }\alpha, \beta > 0, \text{ and }\rho \in \mathbb{C},
\end{equation*}

The Mittag-Leffler function has the following properities, can refer to \cite{A.A.Kilbas,Gorenflo}.
\begin{proposition}\label{prop:Mittag-Leffer}
For the Mittag-Leffler function \( E_{\alpha,\beta}(z) \), the following properties hold:

\begin{enumerate}[\rm(i)]
    \item If \( 0 < \alpha < 1\), \( \beta < 1 + \alpha \), \( z \neq 0 \), and \( |\arg(z)|<\alpha\pi\), then
    \begin{equation}
    E_{\alpha,\beta}(z) = \frac{1}{\alpha} z^{\frac{1-\beta}{\alpha}} \exp\left(z^{\frac{1}{\alpha}}\right)
    + \int_{0}^{\infty} \frac{1}{\pi \alpha} r^{\frac{1-\beta}{\alpha}} \exp\left(-r^{\frac{1}{\alpha}}\right)
    \frac{r \sin(\pi(1-\beta)) - z \sin(\pi(1-\beta + \alpha))}{r^2 - 2rz \cos(\pi \alpha) + z^2} \, dr.
    \end{equation}
   \item The following Laplace transform equation holds:
    \begin{equation}
    \int_{0}^{\infty} e^{-st} t^{\beta-1} E_{\alpha,\beta}(\pm a t^{\alpha}) \, dt = \frac{s^{\alpha-\beta}}{s^{\alpha} \mp a} \quad \text{for} \quad \Re(s) > 0, \, a \in \mathbb{C}, \, |s^{-\alpha} a| < 1.
    \end{equation}
\end{enumerate}
\end{proposition}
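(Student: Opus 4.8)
The plan is to handle the two parts by the same elementary principle---expand the defining power series of $E_{\alpha,\beta}$ and integrate termwise---but against different kernels. I would dispose of (ii) first. Writing $t^{\beta-1}E_{\alpha,\beta}(\pm a t^{\alpha})=\sum_{n\ge0}\frac{(\pm a)^{n}}{\Gamma(\alpha n+\beta)}\,t^{\alpha n+\beta-1}$ and integrating term by term against $e^{-st}$, using $\int_{0}^{\infty}e^{-st}t^{\alpha n+\beta-1}\,dt=\Gamma(\alpha n+\beta)\,s^{-(\alpha n+\beta)}$ for $\Re s>0$, the Gamma factors cancel and one is left with the geometric series $s^{-\beta}\sum_{n\ge0}(\pm a s^{-\alpha})^{n}=s^{-\beta}/(1\mp a s^{-\alpha})=s^{\alpha-\beta}/(s^{\alpha}\mp a)$, which converges precisely when $|s^{-\alpha}a|<1$. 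The only point needing care is the interchange of $\sum$ and $\int$; I would justify it by absolute convergence (Tonelli/dominated convergence) for $\Re s$ large, and then extend the identity to the whole region $\{\Re s>0,\ |s^{-\alpha}a|<1\}$ by analytic continuation, both sides being holomorphic there.

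For (i) I would use Hankel's loop representation $\frac{1}{\Gamma(w)}=\frac{1}{2\pi i}\int_{Ha(\epsilon)}e^{\lambda}\lambda^{-w}\,d\lambda$, where $Ha(\epsilon)$ comes from $-\infty$ below the cut $(-\infty,0]$, circles $|\lambda|=\epsilon$ counterclockwise, and returns to $-\infty$ above the cut. Substituting $w=\alpha n+\beta$, choosing $\epsilon>|z|^{1/\alpha}$ so that $|z\lambda^{-\alpha}|<1$ on the whole contour, interchanging sum and integral (uniform convergence), and summing the geometric series gives
\[
E_{\alpha,\beta}(z)=\frac{1}{2\pi i}\int_{Ha(\epsilon)}\frac{e^{\lambda}\lambda^{\alpha-\beta}}{\lambda^{\alpha}-z}\,d\lambda .
\]
Since $|\arg z|<\alpha\pi$, the principal root $\lambda_{0}=z^{1/\alpha}$ has $|\arg\lambda_{0}|<\pi$ and $|\lambda_{0}|=|z|^{1/\alpha}$, so it is a simple pole lying off the cut. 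Deforming the contour from radius $>|z|^{1/\alpha}$ to radius $<|z|^{1/\alpha}$ and then letting the radius tend to $0$, the residue theorem yields $E_{\alpha,\beta}(z)=(\text{residue at }\lambda_{0})+\frac{1}{2\pi i}\int_{\mathrm{cut}}$. The residue of $e^{\lambda}\lambda^{\alpha-\beta}/(\lambda^{\alpha}-z)$ at $\lambda_{0}$ equals $\frac{e^{\lambda_{0}}\lambda_{0}^{\alpha-\beta}}{\alpha\lambda_{0}^{\alpha-1}}=\frac{1}{\alpha}z^{(1-\beta)/\alpha}\exp(z^{1/\alpha})$, which is the first term of the claim. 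For the remaining integral I would parametrize the two lips of the cut by $\lambda=re^{\pm i\pi}$, so that $e^{\lambda}=e^{-r}$, $\lambda^{\alpha-\beta}=r^{\alpha-\beta}e^{\pm i\pi(\alpha-\beta)}$ and $\lambda^{\alpha}=r^{\alpha}e^{\pm i\pi\alpha}$; adding the two contributions produces the denominator $r^{2\alpha}-2r^{\alpha}z\cos(\pi\alpha)+z^{2}$ and, via $e^{i\theta}-e^{-i\theta}=2i\sin\theta$, the numerator $2i\big[r^{\alpha}\sin(\pi\beta)+z\sin(\pi(\alpha-\beta))\big]$. Substituting $u=r^{\alpha}$ turns $e^{-r}$ into $e^{-u^{1/\alpha}}$ and $r^{\alpha-\beta}\,dr$ into $\tfrac{1}{\alpha}u^{(1-\beta)/\alpha}\,du$, and rewriting $\sin(\pi\beta)=\sin(\pi(1-\beta))$ and $\sin(\pi(\alpha-\beta))=-\sin(\pi(1-\beta+\alpha))$ gives exactly the stated integrand.

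The hard part will be the contour manipulation in (i): one must verify that the small circle $|\lambda|=\epsilon$ contributes nothing as $\epsilon\to0$, keep careful track of the branch values and orientations on the upper and lower lips, and push through the trigonometric simplification. This is precisely where the hypothesis $\beta<1+\alpha$ enters: it is the inequality $\alpha-\beta+1>0$ that kills the small-circle contribution and that makes the resulting integral converge at $r=0$ (convergence at $r=\infty$ being free from $e^{-r^{1/\alpha}}$, and the denominator $r^{2}-2rz\cos(\pi\alpha)+z^{2}$ being nonzero for $r>0$ because $|\arg z|<\alpha\pi$ and $z\neq0$). Part (ii) is then routine.
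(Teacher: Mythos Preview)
Your plan is correct and is in fact the standard derivation of both identities as given in the references the paper cites; the paper itself does not supply a proof of this proposition but merely refers to \cite{A.A.Kilbas,Gorenflo}. Your termwise Laplace computation for (ii) and your Hankel-contour argument for (i) --- summing the series to the contour integral $\frac{1}{2\pi i}\int_{Ha}\frac{e^{\lambda}\lambda^{\alpha-\beta}}{\lambda^{\alpha}-z}\,d\lambda$, collapsing the loop onto the cut, picking up the residue at $z^{1/\alpha}$, and then substituting $u=r^{\alpha}$ --- reproduce exactly the argument in those monographs, and your identification of where $\beta<1+\alpha$ enters (vanishing of the small circle and integrability at $r=0$) is accurate.
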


\begin{remark}\label{change variable equaton of Mittag-Leffer function}
If we take \( z = (-it)^{\alpha} |\xi|^{\beta} \), for \( E_{\alpha,1}((-it)^{\alpha} |\xi|^{\beta}) \) and \( E_{\alpha,\alpha}((-it)^{\alpha} |\xi|^{\beta}) \), we respectively have the following results by using the variable transform \( r = \tilde{r}^{\alpha} t^{\alpha} |\xi|^{\beta} \):
\begin{align*}
    &E_{\alpha,1}((-it)^{\alpha} |\xi|^{\beta}) \\
    &= \frac{1}{\alpha} e^{-it |\xi|^{\frac{\beta}{\alpha}}}
    - \frac{(-it)^{\alpha} |\xi|^{\beta} \sin(\alpha \pi)}{\pi \alpha} \int_{0}^{\infty}
    \frac{e^{-\tilde{r} t |\xi|^{\frac{\beta}{\alpha}}} \alpha \tilde{r}^{\alpha-1} t^{\alpha} |\xi|^{\beta}}
    {\tilde{r}^{2\alpha} t^{2\alpha} |\xi|^{2\beta} - 2 (-i)^{\alpha} \tilde{r}^{\alpha} t^{2\alpha} |\xi|^{2\beta} \cos(\alpha \pi)
    + (-i)^{2\alpha} t^{2\alpha} |\xi|^{2\beta}} \, d\tilde{r} \\
    &= \frac{1}{\alpha} e^{-it |\xi|^{\frac{\beta}{\alpha}}}
    - \frac{\sin(\alpha \pi)}{\pi} \int_{0}^{\infty}
    \frac{e^{-rt |\xi|^{\frac{\beta}{\alpha}}} r^{\alpha-1}}
    {i^{\alpha} r^{2\alpha} - 2 r^{\alpha} \cos(\alpha \pi) + (-i)^{\alpha}} \, dr, \\
    \\
    &E_{\alpha,\alpha}((-it)^{\alpha} |\xi|^{\beta}) \\
    &= \frac{1}{\alpha} (-i)^{1-\alpha} t^{1-\alpha} |\xi|^{\frac{\beta}{\alpha}(1-\alpha)} e^{-it |\xi|^{\frac{\beta}{\alpha}}} \\
    &\quad - \frac{\sin(\pi(1-\alpha))}{\alpha \pi} \int_{0}^{\infty}
    \frac{\tilde{r} t |\xi|^{\frac{\beta}{\alpha}} e^{-\tilde{r} t |\xi|^{\frac{\beta}{\alpha}}} \alpha \tilde{r}^{\alpha-1} t^{\alpha} |\xi|^{\beta}}
    {\tilde{r}^{2\alpha} t^{2\alpha} |\xi|^{2\beta} - 2 \tilde{r}^{\alpha} \cos(\alpha \pi) (-i)^{\alpha} t^{2\alpha} |\xi|^{2\beta}
    + (-i)^{2\alpha} t^{2\alpha} |\xi|^{2\beta}} \, d\tilde{r} \\
    &= \frac{1}{\alpha} (-i)^{1-\alpha} t^{1-\alpha} |\xi|^{\frac{\beta}{\alpha}(1-\alpha)} e^{-it |\xi|^{\frac{\beta}{\alpha}}}
    - \frac{\sin(\pi(1-\alpha))}{\pi} t^{1-\alpha} \int_{0}^{\infty}
    \frac{|\xi|^{\frac{\beta}{\alpha}(1-\alpha)} e^{-rt |\xi|^{\frac{\beta}{\alpha}}} r^{\alpha}}
    {i^{\alpha} r^{2\alpha} - 2 r^{\alpha} \cos(\alpha \pi) + (-i)^{\alpha}} \, dr.
\end{align*}
\end{remark}
Next, we introduce some Littlewood-Paley decomposition for the tempered distribution, referring to \cite{Bahouri,Triebel}. Let $\chi_{-1}$ be a non-negative radial bump function such that $\widehat{\chi}_{-1}(\xi)=1$ for $|\xi|\leq \frac{3}{4}$ and $\widehat{\chi}_{-1}(\xi)=0$ for $|\xi|\geq \frac{4}{3}$. Define the function $\chi_{j}(x)=2^{jd}\chi_{-1}(2^{j}x)$ and consider the pseudo-differential operator $S_{j}$ with symbol $\chi_{j}$ defined pointwise as
$$
S_{j}(f)(x)=\chi_{j}\star f(x),\quad \text{for }f\in \mathcal{S}'.
$$
Define $\varphi_{j}(x)=\chi_{j}(x)-\chi_{j-1}(x)$ and consider the operator $\Delta_{j}$ defined as
$$
\Delta_{j}(f)(x)=S_{j}(f)(x)-S_{j-1}(f)(x)=\varphi_{j}\star f(x),\quad \text{for }f\in \mathcal{S}'.
$$
Hence, we have that
\begin{align*}
f&=\Delta_{-1}(f)+\sum_{j=0}^{\infty}\Delta_{j}(f),\quad f\in \mathcal{S}',\\
f&=\sum_{j\in \mathbb{Z}}\Delta_{j}(f),\quad f\in \mathcal{S}'_{h},
\end{align*}
where $\mathcal{S}'_{h}$ is the dual space of $\dot{\mathcal{S}}(\mathbb{R}^{d})$.
For $s \in \mathbb{R}$ and $1 \leq p, q \leq \infty$, the homogeneous Besov space $\dot{B}^{s}_{p,q}(\mathbb{R}^{d})$ and the Triebel-Lizorkin space $\dot{F}^{s}_{p,q}(\mathbb{R}^{d})$ are defined as follows:
\[
\dot{B}^{s}_{p,q}(\mathbb{R}^{d}) = \left\{ f \in \mathcal{S}'_{h}(\mathbb{R}^{d}) : \left\| f \right\|_{\dot{B}^{s}_{p,q}} < \infty \right\},
\]
and
\[
\dot{F}^{s}_{p,q}(\mathbb{R}^{d}) = \left\{ f \in \mathcal{S}'_{h}(\mathbb{R}^{d}) : \left\| f \right\|_{\dot{F}^{s}_{p,q}} < \infty \right\},
\]
where
\begin{align*}
\left\|f\right\|_{\dot{B}^{s}_{p,q}}=\left\|\left\{2^{js}\left\|\Delta_{j}(f)\right\|_{L^{p}}\right\}_{j\in\mathbb{Z}}\right\|_{l^{q}}
\text{ and }\left\|f\right\|_{\dot{F}^{s}_{p,q}}=
\left\|\left\|\left\{2^{js}\left|\Delta_{j}(f)\right|\right\}_{j\in\mathbb{Z}}\right\|_{l^{q}}\right\|_{L^{p}}.
\end{align*}
The homogeneous H\"{o}lder space $\dot{C}^{s}(\mathbb{R}^{d})$ is defined as
$$
\dot{C}^{s}(\mathbb{R}^{d})=\big\{f\in\mathcal{S}'_{h}:\text{ }\sup_{j\in\mathbb{Z}}2^{js}\big\|\triangle_{j}f\big\|_{L^{\infty}}<\infty\big\}.
$$
In particular, $L^{p}(\mathbb{R}^{d})\approx\dot{F}^{0}_{p,2}(\mathbb{R}^{d})$, $\dot{H}^{s}_{p}(\mathbb{R}^{d})\approx\dot{F}^{s}_{p,2}(\mathbb{R}^{d})$ and $\dot{B}^{s}_{\infty,\infty}(\mathbb{R}^{d})\sim \dot{C}^{s}(\mathbb{R}^{d})$.

Moreover, according to the Littlewood-Paley theory, the Hardy space $\mathcal{H}^{p}(\mathbb{R}^{d})$ for $0<p<\infty$ can be defined as
$$
\mathcal{H}^{p}(\mathbb{R}^{d})=\bigg\{w\in\mathcal{S}':\ \big\|w^{++}\big\|_{L^{p}}<\infty\bigg\},\ \text{where } w^{++}=\sup_{0<t<\infty}|\chi_{t}\star w|,\ \chi_{t}(x)=t^{-d}\chi(x/t),
$$
and $\mathcal{H}^{p}(\mathbb{R}^{d})=\dot{F}^{0}_{p,2}(\mathbb{R}^{d})$. In particular, $\mathcal{H}^{p}\sim L^{p}$ for $1<p<\infty$, but for $0<p<1$, $\mathcal{H}^{p}$ is only a Fréchet space, not a Banach space. Moreover,
$$
\mathcal{H}^{p}(\mathbb{R}^{d})^{\star}=\dot{C}^{d(\frac{1}{p}-1)}(\mathbb{R}^{d}),\quad \text{and}\quad \mathcal{H}^{1}(\mathbb{R}^{d})^{\star}=BMO(\mathbb{R}^{d}),
$$
and
$$
\dot{B}^{s}_{p,\min\{p,q\}}(\mathbb{R}^{d})\hookrightarrow\dot{F}^{s}_{p,q}(\mathbb{R}^{d})\hookrightarrow
\dot{B}^{s}_{p,\max\{p,q\}}(\mathbb{R}^{d}),\text{ for }0<p,q<\infty.
$$
For more details on Hardy spaces, we refer to \cite{Grafakos}.

Next we introduce some properties about interpolation space, can be found in \cite{Triebel,Kenig}.
\begin{proposition}\label{prop:interpolation}
The following properties hold:

\begin{enumerate}[\rm(i)]
    \item Let \( A_{0} \) and \( A_{1} \) be Banach spaces, and \( A_{0} \cap A_{1} \) be dense in both \( A_{0} \) and \( A_{1} \), while \( A_{0}^{\star} \cap A_{1}^{\star} \) is dense in both \( A_{0}^{\star} \) and \( A_{1}^{\star} \). Then,
    \[
    \left( BMO(A_{0}), L^{p}(A_{1}) \right)_{\theta} = L^{q} \left( (A_{0}, A_{1})_{\theta} \right) \quad \text{for } 1 < p < \infty \quad \text{and} \quad \frac{1}{q} = \frac{\theta}{p}.
    \]

    \item Let \( B_{1}, B_{2}, B_{3}, B_{4} \) be Banach spaces such that \( B_{1} \cap B_{2} \) is dense in both \( B_{2} \) and \( B_{3} \). For \( 0 < \theta_{1}, \theta_{2} < 1 \), and \( (B_{2}, B_{4})_{\theta_{1}} = B_{3} \), \( (B_{1}, B_{3})_{\theta_{2}} = B_{2} \), then we have \( (B_{1}, B_{4})_{\eta} = B_{3} \), where \( \eta = \frac{\theta_{1} \theta_{2}}{1 - \theta_{2} + \theta_{2}^{2}} \).
\end{enumerate}
\end{proposition}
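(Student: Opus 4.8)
The plan is to view both identities through the complex interpolation functor $(\cdot,\cdot)_\theta=[\cdot,\cdot]_\theta$: part (i) is the vector-valued form of the classical principle that $BMO$ may replace $L^{\infty}$ against an $L^{p}$-scale (Fefferman--Stein, Janson--Jones), and part (ii) is Wolff's reiteration theorem. For (i) I would argue by duality. Under the stated density hypotheses one has the vector-valued Fefferman--Stein duality $BMO(A_0)=\bigl(\mathcal{H}^{1}(A_0^{\star})\bigr)^{\star}$ and $L^{p}(A_1)=\bigl(L^{p'}(A_1^{\star})\bigr)^{\star}$, together with the duality theorem for the complex method, $[X_0,X_1]_\theta^{\star}=[X_0^{\star},X_1^{\star}]_\theta$, which holds precisely when $X_0\cap X_1$ is dense in both $X_0$ and $X_1$. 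Hence it suffices to establish the predual identity
\[
\bigl[\mathcal{H}^{1}(A_0^{\star}),\,L^{p'}(A_1^{\star})\bigr]_\theta=L^{q'}\bigl([A_0^{\star},A_1^{\star}]_\theta\bigr),\qquad \frac{1}{q'}=(1-\theta)+\frac{\theta}{p'},
\]
and then dualize once more, using $[A_0^{\star},A_1^{\star}]_\theta^{\star}=[A_0,A_1]_\theta$ --- again a consequence of the density of $A_0^{\star}\cap A_1^{\star}$ --- which returns $L^{q}\bigl([A_0,A_1]_\theta\bigr)$ with $\tfrac1q=\tfrac\theta p$.

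For the predual identity I would use that vector-valued $\mathcal{H}^{1}$ is, through the grand maximal function / atomic decomposition, a complemented subspace --- a retract, with retraction and co-retraction independent of the coefficient space --- of a vector-valued tent / $L^{1}$ space over $\mathbb{R}^{d}\times(0,\infty)$; since the complex method commutes with retractions and Calder\'on's product theorem handles the vector-valued $L$-spaces, the claim follows by transporting the $L$-space computation through the retraction. (Alternatively one runs the Fefferman--Stein argument with the sharp maximal function and Stein's interpolation of analytic families directly on $BMO(A_0)$ and $L^{p}(A_1)$; the duality route merely keeps the coefficient bookkeeping cleanest.) For (ii) the plan is to reduce to the reiteration theorem for the complex method inside the single couple $(B_1,B_4)$: positing provisionally $B_2=[B_1,B_4]_{a}$ and $B_3=[B_1,B_4]_{\eta}$ and feeding the hypotheses into $\bigl[[B_1,B_4]_{\sigma_0},[B_1,B_4]_{\sigma_1}\bigr]_{\lambda}=[B_1,B_4]_{(1-\lambda)\sigma_0+\lambda\sigma_1}$, the relation $[B_1,B_3]_{\theta_2}=B_2$ forces $a=\theta_2\eta$ while $[B_2,B_4]_{\theta_1}=B_3$ forces $\eta=(1-\theta_1)a+\theta_1$; solving this linear pair produces the asserted value of $\eta$. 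The genuine content is to \emph{justify} that $B_2$ and $B_3$ lie on the complex-interpolation scale of the couple $(B_1,B_4)$ at all --- this is Wolff's theorem, proved by the following bootstrap: given $f$ in the unit ball of $B_3$, build an admissible analytic function on the strip witnessing $f\in[B_1,B_4]_\eta$ by iterating the two families of witnesses furnished by $[B_2,B_4]_{\theta_1}=B_3$ and $[B_1,B_3]_{\theta_2}=B_2$ and passing to a limit, the density of $B_1\cap B_2$ in $B_2$ and in $B_3$ being exactly what makes the iteration converge and identifies the limiting space.

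The step I expect to be the main obstacle is, in (i), the predual $\mathcal{H}^{1}$--$L^{p'}$ identity with the \emph{correct} coefficient target $[A_0^{\star},A_1^{\star}]_\theta$: since $\mathcal{H}^{1}$ is not a Banach function lattice, Calder\'on's product formula is not directly available, and one must verify carefully that the retraction onto a vector-valued $L^{1}$ space is uniform in the coefficient space and genuinely intertwines the two complex-interpolation functors --- any slippage there corrupts the exponent on the target and, after the two dualizations, the relation $\tfrac1q=\tfrac\theta p$ itself. In (ii) the corresponding delicate point is non-uniqueness: a pair of complex-interpolation relations need not by itself pin down the spaces involved, so the formal exponent computation becomes a theorem only after the Wolff bootstrap is carried out rigorously, and it is there that the density/denseness hypotheses must be kept in play throughout.
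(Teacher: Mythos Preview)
The paper does not prove this proposition at all: immediately before stating it, the authors write ``Next we introduce some properties about interpolation space, can be found in \cite{Triebel,Kenig}'', and no argument follows. Part (i) is quoted from the appendix of Kenig--Ponce--Vega (\emph{Commun.\ Pure Appl.\ Math.}\ 1993), and (ii) is Wolff's reiteration theorem; both are treated here as black boxes.

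Your plan therefore goes well beyond what the paper offers, and is substantively sound. For (i), your duality route through vector-valued $\mathcal{H}^{1}$ is legitimate and you have correctly flagged the genuine obstacle, namely that $\mathcal{H}^{1}$ is not a Banach lattice so Calder\'on's product formula is not directly available and one must check that the retraction is uniform in the coefficient space. The alternative you mention --- running Stein's analytic interpolation together with the Fefferman--Stein sharp maximal function directly on the $BMO$--$L^{p}$ pair --- is in fact closer to what Kenig--Ponce--Vega actually do in their appendix, and it sidesteps the retraction bookkeeping entirely; you may find that route cleaner. For (ii), identifying the statement as Wolff's theorem and laying out the bootstrap construction of the analytic witness is exactly right, and you correctly isolate the density hypothesis as what makes the iteration converge.

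One small caution on (ii): solving the linear pair $a=\theta_{2}\eta$ and $\eta=(1-\theta_{1})a+\theta_{1}$ that you set up yields $\eta=\theta_{1}/(1-\theta_{2}+\theta_{1}\theta_{2})$, which does not match the paper's stated value $\eta=\theta_{1}\theta_{2}/(1-\theta_{2}+\theta_{2}^{2})$. Your algebra is the standard one for Wolff reiteration; the discrepancy is most likely a misprint in the paper's statement rather than an error in your derivation, but you should double-check against the original references before committing to either formula.
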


Using Proposition \ref{prop:interpolation} and combining the Riesz interpolation
\[
\left( L^{p}_{x}L^{q}_{t}, L^{r}_{x}L^{\infty}_{t} \right)_{\theta} = L_{x}^{m}L^{s}_{t}, \quad \frac{1}{m} = \frac{\theta}{p} + \frac{1 - \theta}{r} \text{ and } \frac{1}{s} = \frac{\theta}{q},
\]
Kenig \cite{Kenig} obtains the following interpolation result:
\begin{equation}\label{Kenig interpolation}
\left( BMO_{x}L_{t}^{2}, L^{p}_{x}L^{\infty}_{t} \right)_{\theta} = L^{r}_{x}L^{s}_{t}, \quad 1 < p < \infty, \quad \frac{1}{r} = \frac{\theta}{p} \text{ and } \frac{1}{s} = \frac{1 - \theta}{2}.
\end{equation}

Moreover, Kenig \cite{Kenig} also get the following fractional Leibniz rule which is usuful to estimate the nonlinear term.
\begin{proposition}\label{Leibnitz rule}
Let $d=1$, then following estimate holds:

\item{\rm(i)}
$$
\left\|D^{\vartheta}F(f)\right\|_{L^{p}_{x}L^{q}_{T}}\lesssim \left\|F'(f)\right\|_{L^{p_{1}}_{x}L^{q_{1}}_{T}}\left\|D^{\vartheta}f\right\|_{L^{p_{2}}_{x}L^{q_{2}}_{T}},\quad
\frac{1}{p}=\frac{1}{p_{1}}+\frac{1}{p_{2}},\frac{1}{q}=\frac{1}{q_{1}}+\frac{1}{q_{2}},
$$
where $\vartheta\in(0,1)$, $1<p,q,p_{1},p_{2},q_{2}<\infty$ and $1<q_{1}\leq \infty$.

\item{\rm(ii)}
$$
\left\|D^{\vartheta}(fg)-(D^{\vartheta}f)g-f(D^{\vartheta}g)\right\|_{L^{p}_{x}L^{q}_{T}}\lesssim \left\|D^{\vartheta_{1}}f\right\|_{L^{p_{1}}_{x}L^{q_{1}}_{T}}\left\|D^{\vartheta_{2}}g\right\|_{L^{p_{2}}_{x}L^{q_{2}}_{T}},
\frac{1}{p}=\frac{1}{p_{1}}+\frac{1}{p_{2}},\frac{1}{q}=\frac{1}{q_{1}}+\frac{1}{q_{2}},
$$
where $\vartheta\in(0,1)$, $\vartheta=\vartheta_{1}+\vartheta_{2}$, $\vartheta_{1},\vartheta_{2}\in [0,\vartheta]$, $1<p,q,p_{1},q_{1},p_{2},q_{2}<\infty$. Moreover, if $\vartheta_{1}=0$, the $q_{1}=\infty$ is also allowed.
\end{proposition}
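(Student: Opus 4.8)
The plan is to reduce both inequalities to one-dimensional real-variable harmonic analysis by means of the pointwise representation of the fractional derivative valid for $0<\vartheta<1$,
\[
D^{\vartheta}h(x)=c_{\vartheta}\,\mathrm{p.v.}\!\int_{\mathbb{R}}\frac{h(x)-h(x-y)}{|y|^{1+\vartheta}}\,dy ,
\]
which holds for Schwartz $h$ and, whenever the integral converges absolutely, for sufficiently regular $h$; write $\delta_{y}h(x):=h(x)-h(x-y)$ and let $M$ denote the Hardy--Littlewood maximal operator (acting in $x$, with $t$ a parameter). The scheme is: (a) obtain a bound, pointwise in $(x,t)$, of each left-hand side by a product of Gagliardo-type square functions $\big(\int_{\mathbb{R}}|\delta_{y}u(x,t)|^{2}|y|^{-1-2\sigma}\,dy\big)^{1/2}$ and, for part~(i), of $M$ applied to $F'(f)$; (b) take the $L^{q}_{T}$-norm in $t$ and then the $L^{p}_{x}$-norm in $x$, splitting by Hölder's inequality according to $\tfrac1q=\tfrac1{q_{1}}+\tfrac1{q_{2}}$ and $\tfrac1p=\tfrac1{p_{1}}+\tfrac1{p_{2}}$; (c) apply the Stein--Strichartz square-function estimate
\[
\Big\|\Big(\int_{\mathbb{R}}\frac{|\delta_{y}u|^{2}}{|y|^{1+2\sigma}}\,dy\Big)^{1/2}\Big\|_{L^{r}}\lesssim\|D^{\sigma}u\|_{L^{r}},\qquad 0<\sigma<1,\ 1<r<\infty ,
\]
together with the maximal inequality. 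Since the inner norm is taken in $L^{q}_{T}$ with $1<q<\infty$, one uses the $L^{q}_{T}$-valued (UMD) versions of these scalar estimates, whose constants depend only on the exponents; keeping track of this vectorization is routine but must be done with care.

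For part~(ii), the core is the pointwise algebraic identity obtained by substituting the representation above for $D^{\vartheta}(fg)$, $D^{\vartheta}f$ and $D^{\vartheta}g$ (each legitimate since $0<\vartheta<1$) and simplifying the numerator via
\[
\delta_{y}(fg)(x)-g(x)\,\delta_{y}f(x)-f(x)\,\delta_{y}g(x)=-\,\delta_{y}f(x)\,\delta_{y}g(x),
\]
which yields $D^{\vartheta}(fg)-(D^{\vartheta}f)g-f\,D^{\vartheta}g=-c_{\vartheta}\int_{\mathbb{R}}|y|^{-1-\vartheta}\,\delta_{y}f(x)\,\delta_{y}g(x)\,dy$ (the integral now converging absolutely, since $\delta_{y}f\,\delta_{y}g=O(|y|^{2})$ near $0$). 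Splitting the weight by $1+\vartheta=(\tfrac12+\vartheta_{1})+(\tfrac12+\vartheta_{2})$, legitimate because $\vartheta_{1}+\vartheta_{2}=\vartheta$, and applying the Cauchy--Schwarz inequality in $y$ gives, pointwise,
\[
\big|D^{\vartheta}(fg)-(D^{\vartheta}f)g-f\,D^{\vartheta}g\big|\lesssim\Big(\int_{\mathbb{R}}\frac{|\delta_{y}f|^{2}}{|y|^{1+2\vartheta_{1}}}\,dy\Big)^{1/2}\Big(\int_{\mathbb{R}}\frac{|\delta_{y}g|^{2}}{|y|^{1+2\vartheta_{2}}}\,dy\Big)^{1/2},
\]
after which steps~(b)--(c) with $\sigma=\vartheta_{1}$ and $\sigma=\vartheta_{2}$ (both in $(0,1)$ when $\vartheta_{1},\vartheta_{2}>0$) finish this case. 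The endpoint $\vartheta_{1}=0$, $q_{1}=\infty$ is not reachable by this Cauchy--Schwarz split and is handled separately: one regards $D^{\vartheta}(fg)-(D^{\vartheta}f)g-f\,D^{\vartheta}g$ as a bilinear operator with symbol $|\xi+\eta|^{\vartheta}-|\xi|^{\vartheta}-|\eta|^{\vartheta}$, factors it as $|\eta|^{\vartheta}$ composed with a Coifman--Meyer multiplier (equivalently, runs a Littlewood--Paley paraproduct argument), and places $f$ in $L^{\infty}$ and $D^{\vartheta}g$ in the remaining norms.

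For part~(i), the mean value theorem gives
\[
D^{\vartheta}F(f)(x)=c_{\vartheta}\int_{\mathbb{R}}\frac{\delta_{y}f(x)}{|y|^{1+\vartheta}}\Big(\int_{0}^{1}F'\big(f(x-y)+s\,\delta_{y}f(x)\big)\,ds\Big)dy ,
\]
and one would like, after an appropriate use of Cauchy--Schwarz in $y$, to dominate the left-hand side by $\big(\int|\delta_{y}f|^{2}|y|^{-1-2\vartheta}\,dy\big)^{1/2}$ times $M\big(F'(f)\big)$ and then conclude as in part~(ii). The obstacle is precisely the inner factor $\int_{0}^{1}F'\big(f(x-y)+s\,\delta_{y}f(x)\big)\,ds$: its argument is a point on the segment joining $f(x-y)$ and $f(x)$, which need not equal $f(z)$ for any $z$ near $x$, so it is not directly controlled by $M\big(F'(f)\big)(x)$, and a naive Cauchy--Schwarz split of $|y|^{-1-\vartheta}$ would leave the non-integrable factor $\int|y|^{-1}\,dy$. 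I would resolve this following Christ--Weinstein: decompose the $y$-integral according to whether $|\delta_{y}f(x)|$ is comparable to $|f(x)|$ or not, estimating the small-oscillation regime by $M\big(F'(f)\big)(x)$ and absorbing the excess in the complementary regime into the square function of $f$. Alternatively one may telescope $F(f)=\sum_{j}\big(F(S_{j+1}f)-F(S_{j}f)\big)$, write each term as $\Delta_{j}f\cdot G_{j}$ with $G_{j}=\int_{0}^{1}F'\big((1-s)S_{j}f+sS_{j+1}f\big)\,ds$, note that $G_{j}$ is $F'$ evaluated at an average of $f$ over a ball of radius $\sim 2^{-j}$, so that $\sup_{j}|G_{j}|$ is controlled by a maximal function built from $F'(f)$ (immediate when $|F'(a)|\lesssim|a|^{k}$, the case relevant to the nonlinearities in \eqref{Eq:F-T-S-E}), and combine this with the Littlewood--Paley characterization $\|D^{\vartheta}f\|_{L^{r}}\approx\big\|(\sum_{j}2^{2j\vartheta}|\Delta_{j}f|^{2})^{1/2}\big\|_{L^{r}}$; the endpoint $q_{1}=\infty$ is again dispatched by placing $F'(f)$ in $L^{\infty}_{T}$. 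I expect the intermediate-value control in part~(i), together with the careful passage to the $L^{q}_{T}$-valued square-function and maximal inequalities, to be the only genuinely delicate steps, the remainder reducing to Hölder's inequality.
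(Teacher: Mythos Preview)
The paper does not supply a proof of this proposition: it is quoted directly from Kenig--Ponce--Vega \cite{Kenig} (see the sentence introducing it, ``Moreover, Kenig \cite{Kenig} also get the following fractional Leibniz rule\ldots''), so there is nothing in the present paper to compare your argument against.

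That said, your outline is essentially the argument given in the original reference and its antecedents. Part~(ii) via the identity $\delta_y(fg)-g\,\delta_yf-f\,\delta_yg=-\delta_yf\,\delta_yg$, Cauchy--Schwarz in $y$, and the $\dot{H}^{\sigma}$ square-function characterization is exactly the Kenig--Ponce--Vega proof (their Theorem~A.8/Lemma~A.11); you are right that the endpoint $\vartheta_1=0$ requires a separate Coifman--Meyer/paraproduct treatment. For part~(i) you correctly locate the genuine obstacle---the intermediate value $f(x-y)+s\,\delta_yf(x)$ is not $f$ evaluated anywhere---and name the right fix (Christ--Weinstein, \emph{J.\ Funct.\ Anal.}\ 1991, Proposition~3.1, which is what \cite{Kenig} invokes as their Theorem~A.6). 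The passage to mixed $L^p_xL^q_T$ norms is, as you say, a routine vector-valued extension once the scalar pointwise bounds are in place. Your plan is sound; the only caution is that the ``alternative'' telescoping route you sketch for~(i) needs the extra hypothesis $|F'(a)|\lesssim|a|^k$ (or similar) to control $\sup_j|G_j|$ by a maximal function of $F'(f)$, whereas the Christ--Weinstein argument works for general $C^1$ nonlinearities.
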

\begin{remark}\label{A.S.I.E}
 Due to the limitations imposed by interpolation, as discussed in Kenig \cite{Kenig}, neither $p_{1}$ nor $p_{2}$ can take the value of $\infty$. For example, let $m$ be an odd number, $s \in (0,1)$, $0 < \theta < 1$, then it is not possible to obtain the estimate:
\begin{align*}
\left\| D^{s} |f|^{m} \right\|_{L^{\frac{2m}{m-1}}_{x} L^{2}_{T}} \lesssim
\left\||f|^{m-1} \right\|_{L^{\frac{2m}{m-1}}_{x} L^{\infty}_{T}}
\left\| D^{s} f \right\|_{L^{\infty}_{x} L^{2}_{T}}.
\end{align*}
However, using Proposition \ref{Leibnitz rule}, we can assert that
\begin{align}\label{edge equation}
\left\| D^{s} |f|^{m} \right\|_{L^{\frac{2m}{m-1}}_{x} L^{2}_{T}} \lesssim
\left\||f|\right\|^{m-2}_{L^{2m}_{x} L^{\infty}_{T}}
\left\| D^{\theta s} f \right\|_{L^{\frac{2m}{1-\theta}}_{x} L^{\frac{2}{\theta}}_{T}}
\left\| D^{(1-\theta) s} f \right\|_{L^{\frac{2m}{\theta}}_{x} L^{\frac{2}{1-\theta}}_{T}}
+ \left\||f|\right\|^{m-1}_{L^{2m}_{x} L^{\infty}_{T}}
\left\| D^{s} f \right\|_{L^{\infty}_{x} L^{2}_{T}}.
\end{align}
Indeed, using H\"{o}lder's inequality and Proposition \ref{Leibnitz rule}, we have
\begin{align*}
&\left\| D^{s} |f|^{m} \right\|_{L^{\frac{2m}{m-1}}_{x} L^{2}_{T}}\\
&\lesssim\left\| (D^{s} |f|^{m-2})(f\bar{f}) \right\|_{L^{\frac{2m}{m-1}}_{x} L^{2}_{T}} +
\left\| |f|^{m-2} D^{s}(f\bar{f}) \right\|_{L^{\frac{2m}{m-1}}_{x} L^{2}_{T}} +
\left\| D^{s} |f|^{m-2} \right\|_{L^{\frac{2m}{m-3}}_{x} L^{2}_{T}}
\left\| f\bar{f} \right\|_{L^{m}_{x} L^{\infty}_{T}} \\
&\lesssim
\left\| D^{s} |f|^{m-2} \right\|_{L^{\frac{2m}{m-3}}_{x} L^{2}_{T}}
\left\| f\bar{f} \right\|_{L^{m}_{x} L^{\infty}_{T}} +
\left\||f|\right\|^{m-2}_{L^{2m}_{x} L^{\infty}_{T}}
\left\| D^{s} (f\bar{f}) \right\|_{L^{2m}_{x} L^{2}_{T}}.
\end{align*}
By continuing the above process for $\left\| D^{s} |f|^{m-2} \right\|_{L^{\frac{2m}{m-3}}_{x} L^{2}_{T}}$, we obtain
\begin{align*}
\left\| D^{s} |f|^{m-2} \right\|_{L^{\frac{2m}{m-3}}_{x} L^{2}_{T}} &\lesssim
\left\| D^{s} |f|^{m-4} \right\|_{L^{\frac{2m}{m-5}}_{x} L^{2}_{T}}
\left\| f\bar{f} \right\|_{L^{m}_{x} L^{\infty}_{T}} +
\left\||f|\right\|^{m-4}_{L^{2m}_{x} L^{\infty}_{T}}
\left\| D^{s} (f\bar{f}) \right\|_{L^{2m}_{x} L^{2}_{T}}.
\end{align*}
Hence, we obtain
\begin{align*}
\left\| D^{s} |f|^{m} \right\|_{L^{\frac{2m}{m-1}}_{x} L^{2}_{T}} &\lesssim
\left\| D^{s} |f|^{m-2} \right\|_{L^{\frac{2m}{m-3}}_{x} L^{2}_{T}}
\left\| f \right\|_{L^{2m}_{x} L^{\infty}_{T}} +
\left\||f|\right\|^{m-2}_{L^{2m}_{x} L^{\infty}_{T}}
\left\| D^{s} (f\bar{f}) \right\|_{L^{2m}_{x} L^{2}_{T}} \\
&\lesssim
\left\| D^{s} |f|^{m-4} \right\|_{L^{\frac{2m}{m-5}}_{x} L^{2}_{T}}
\left\| f\bar{f} \right\|^{2}_{L^{m}_{x} L^{\infty}_{T}} +
\left\||f|\right\|^{m-2}_{L^{2m}_{x} L^{\infty}_{T}}
\left\| D^{s} (f\bar{f}) \right\|_{L^{2m}_{x} L^{2}_{T}} \\
&\quad \vdots \\
&\lesssim
\left\||f|\right\|^{m-2}_{L^{2m}_{x} L^{\infty}_{T}}
\left\| D^{s} (f\bar{f}) \right\|_{L^{2m}_{x} L^{2}_{T}}.
\end{align*}
Note that by using Proposition \ref{Leibnitz rule},
\begin{align*}
\left\| D^{s} (f\bar{f}) \right\|_{L^{2m}_{x} L^{2}_{T}}&\lesssim\left\|(D^{s}f)\bar{f}\right\|_{L^{2m}_{x} L^{2}_{T}}+\left\|\bar{f}(D^{s}f)\right\|_{L^{2m}_{x} L^{2}_{T}}+\big\| D^{\theta s} f \big\|_{L^{\frac{2m}{1-\theta}}_{x} L^{\frac{2}{\theta}}_{T}}
\big\| D^{(1-\theta) s} f \big\|_{L^{\frac{2m}{\theta}}_{x} L^{\frac{2}{1-\theta}}_{T}}\\
&\lesssim\big\|D^{s}f\big\|_{L^{\infty}_{x}L^{2}_{T}}\big\|f\big\|_{L^{2m}_{x}L^{\infty}_{T}}+\big\| D^{\theta s} f \big\|_{L^{\frac{2m}{1-\theta}}_{x} L^{\frac{2}{\theta}}_{T}}
\big\| D^{(1-\theta) s} f \big\|_{L^{\frac{2m}{\theta}}_{x} L^{\frac{2}{1-\theta}}_{T}},
\end{align*}
hence we get \eqref{edge equation}.
\end{remark}
Next we introduce the Bessel function $J_{\nu}(t)$, that is defined
$$
J_{\nu}(t)=\frac{\left(\frac{t}{2}\right)^{\nu}}{\Gamma\left(\nu+\frac{1}{2}\right)\Gamma\left(\frac{1}{2}\right)}\int_{-1}^{1}
e^{its}\left(1-s^{2}\right)^{\nu-\frac{1}{2}}ds,\quad \Re(\nu)>\frac{1}{2},t\geq 0.
$$
The following relation can be founded in \cite{Grafakos}. There exists the function $h(t)$ and constant $c_{d}$ such that
\begin{align}\label{Bessel function transform}
J_{\frac{d-2}{2}}(t)=c_{d}t^{\frac{d-2}{2}}\Re\left(e^{it}h(t)\right)
\end{align}
and the function $h(t)$ satisfies that
\begin{align}\label{h function property}
\big|\partial^{k}_{t}h(t)\big|\lesssim \left(1+t\right)^{-\frac{n-1}{2}-k}.
\end{align}
The following lemma, known as the Van der Corput lemma, plays a key role in the proof of dispersive estimates and can be found in \cite{Grafakos}.
\begin{lemma}[Van der Corput Lemma]\label{Van der Corput lemma}
Consider a real-valued smooth function $\omega$ defined on the interval $(a,b)$ that satisfies $|\omega^{(k)}(x)| \geq \lambda$. Then, for any function $\phi$ with an integrable derivative on $(a,b)$, we have
$$
\left| \int_{a}^{b} e^{i\lambda \omega(x)} \phi(x) \, dx \right| \lesssim \lambda^{-\frac{1}{k}} \left[ |\phi(b)| + \int_{a}^{b} |\phi'(x)| \, dx \right],
$$
under the conditions:
\begin{enumerate}
\item[\rm(i)] $k \geq 2$, or
\item[\rm(ii)] $k = 1$ and $\omega'(x)$ is monotonic.
\end{enumerate}
\end{lemma}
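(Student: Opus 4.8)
The plan is to prove the lemma by induction on the order $k$, disposing of $k=1$ with a single integration by parts and deducing the case $k\ge 2$ from Van der Corput's excision argument. It is convenient to set $M=|\phi(b)|+\int_a^b|\phi'(x)|\,dx$; since $\phi'$ is integrable one has $\sup_{x\in(a,b)}|\phi(x)|\le M$, and this is the only property of $\phi$ that will be used, every term being bounded by a constant (depending only on $k$) times $\lambda^{-1/k}M$. We prove the standard normalized form, with phase $e^{i\omega(x)}$ and $|\omega^{(k)}(x)|\ge\lambda$ on $(a,b)$ (and $\omega'$ monotone when $k=1$); the version stated in the lemma is recovered by absorbing the parameter appearing in the exponent into $\omega$ and rescaling.

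For the base case $k=1$, write $e^{i\omega}=(i\omega')^{-1}\,\frac{d}{dx}e^{i\omega}$, which is legitimate because $|\omega'|\ge\lambda>0$, and integrate by parts. The boundary contribution is at most $\lambda^{-1}(|\phi(a)|+|\phi(b)|)\le 2\lambda^{-1}M$. The resulting integral splits into a piece carrying $\phi'/\omega'$, bounded by $\lambda^{-1}\int_a^b|\phi'|\le\lambda^{-1}M$, and a piece carrying $\phi\,\omega''/(\omega')^2$; here the key point is that $\omega''/(\omega')^2=-\big(1/\omega'\big)'$ has a fixed sign because $\omega'$, hence $1/\omega'$, is monotone, so its total integral equals $|1/\omega'(b)-1/\omega'(a)|\le 2\lambda^{-1}$, making this piece $\le 2\lambda^{-1}M$. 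Summing the three contributions gives the estimate for $k=1$.

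For the inductive step, assume the lemma for $k-1\ge 1$; note that when $k-1=1$ the required monotonicity of $\omega'$ is automatic, since $|\omega''|=|\omega^{(k)}|\ge\lambda$. Changing $\omega$ to $-\omega$ if necessary, assume $\omega^{(k)}\ge\lambda$ on $(a,b)$, so $\omega^{(k-1)}$ is increasing and has at most one zero; let $c\in[a,b]$ be that zero, or, if there is none, the endpoint at which $|\omega^{(k-1)}|$ is smallest. In every case $|\omega^{(k-1)}(x)|=|\omega^{(k-1)}(x)-\omega^{(k-1)}(c)|\ge\lambda|x-c|$ throughout $[a,b]$. If $b-a\le\lambda^{-1/k}$ the bound already follows from $|\int_a^b e^{i\omega}\phi|\le(b-a)M$; otherwise set $\delta=\lambda^{-1/k}<b-a$ and split $(a,b)$ into $I_\delta=(a,b)\cap(c-\delta,c+\delta)$ and the at most two remaining intervals. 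On $I_\delta$ use $|\int_{I_\delta}e^{i\omega}\phi|\le 2\delta M$; on each remaining interval $|\omega^{(k-1)}|\ge\lambda\delta$, so the inductive hypothesis yields a contribution $\lesssim(\lambda\delta)^{-1/(k-1)}M$ (each endpoint value of $|\phi|$ and each truncated integral of $|\phi'|$ being at most $M$). Adding these, $|\int_a^b e^{i\omega}\phi|\lesssim\big((\lambda\delta)^{-1/(k-1)}+\delta\big)M$, and with $\delta=\lambda^{-1/k}$ both terms equal $\lambda^{-1/k}$, which closes the induction.

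The main obstacle is the inductive step, not any individual estimate: one cannot simply iterate the $k=1$ integration by parts, since dividing by $\omega^{(k-1)}$ is illegitimate near its zero. Van der Corput's device — excising a $\lambda^{-1/k}$-neighbourhood of that zero, controlling the bulk by the lower-order case and the neighbourhood by the trivial bound, and then optimizing the length of the cut — is exactly what circumvents this. The residual work is bookkeeping: verifying that $M$ can only decrease when $(a,b)$ is replaced by a subinterval, so the two side estimates fold back into the global $M$, and that the accumulated constant $c_k$ depends only on $k$.
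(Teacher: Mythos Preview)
The paper does not supply its own proof of this lemma; it is simply stated with a citation to Grafakos's textbook. Your argument is the standard induction-plus-excision proof found there (and in Stein), and it is correct; your choice to work with the normalized phase $e^{i\omega}$ under the hypothesis $|\omega^{(k)}|\ge\lambda$ is in fact the form actually used in the paper's later applications (e.g.\ Lemma~\ref{diepersive estimate}), so no further rescaling is needed.
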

\section{Some Lemma}
In this section, we establish several lemmas that are crucial for the proofs presented later in this paper. For convience, we denote that $\delta=\beta/\alpha$.
\begin{lemma}\label{find Mild solution}
If $w$ satisfies E.q.\eqref{Eq:F-T-S-E}, then the solution $w(t,x)$ can be represented as:
\begin{align}\label{mild solution}
w(t,x) &= E_{\alpha,1}((-it)^{\alpha}D^{\beta})w_{0}(x) + \frac{1}{i^{\alpha}}
\int_{0}^{t}(t-s)^{\alpha-1}E_{\alpha,\alpha}((-i(t-s))^{\alpha}D^{\beta})g(w(s,x)) \, ds,
\end{align}
Moreover, under the assumption of neglecting constants, the function $u(t,x)$ approximates to:
\begin{align}\label{mild solution equivalence}
w(t,x) &\sim \left(e^{-itD^{\delta}} + \int_{0}^{\infty}
\frac{e^{-rtD^{\delta}}r^{\alpha-1}}
{i^{\alpha}r^{2\alpha} - 2r^{\alpha}\cos(\alpha\pi) + (-i)^{\alpha}} \, dr\right) w_{0}(x)\\\notag
&\quad + i^{-\alpha} \int_{0}^{t} \left(D^{\delta-\beta}
e^{-i(t-s)D^{\delta}} + \int_{0}^{\infty} \frac{D^{\delta-\beta}
e^{-r(t-s)D^{\delta}} r^\alpha}{i^{\alpha}r^{2\alpha} - 2r^\alpha \cos(\alpha\pi) + (-i)^{\alpha}} \, dr \right) g(w(s,x)) \, ds,
\end{align}
where each operator is defined as a pseudo-differential with the symbol:
\begin{align*}
&E_{\alpha,1}((-it)^{\alpha}|\xi|^{\beta}),
E_{\alpha,\alpha}((-i(t-s))^{\alpha}|\xi|^{\beta}),
e^{-it|\xi|^{\delta}},|\xi|^{\delta-\beta} e^{-i(t-s)|\xi|^{\delta}},\\
&\int_{0}^{\infty}\frac{e^{-rt|\xi|^{\delta}}r^{\alpha-1}}{i^{\alpha}r^{2\alpha} - 2r^{\alpha}\cos(\alpha\pi) + (-i)^{\alpha}} dr,
 \int_{0}^{\infty}\frac{|\xi|^{\delta-\beta} e^{-r(t-s)|\xi|^{\delta}}r^\alpha}{i^{\alpha}r^{2\alpha} - 2r^\alpha \cos(\alpha\pi) + (-i)^{\alpha}} dr,
\end{align*}
respective to each term.
\end{lemma}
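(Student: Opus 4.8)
The plan is to derive the representation formula \eqref{mild solution} by applying the Laplace transform in time to \eqref{Eq:F-T-S-E}, and then to obtain \eqref{mild solution equivalence} by feeding the change-of-variable identities of Remark \ref{change variable equaton of Mittag-Leffer function} (with $z=(-it)^{\alpha}|\xi|^{\beta}$ and $\delta=\beta/\alpha$) into \eqref{mild solution}. First I would take the Fourier transform in $x$, turning \eqref{Eq:F-T-S-E} into the scalar fractional ODE $i^{\alpha}\partial_{t}^{\alpha}\widehat{w}(t,\xi)+|\xi|^{\beta}\widehat{w}(t,\xi)=-\widehat{g(w)}(t,\xi)$ with $\widehat{w}(0,\xi)=\widehat{w_{0}}(\xi)$. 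Applying $\mathcal{L}$ in $t$ and using $\mathcal{L}[\partial_{t}^{\alpha}f](s)=s^{\alpha}\mathcal{L}[f](s)-s^{\alpha-1}f(0)$ for the Caputo derivative with $0<\alpha<1$, one solves the resulting algebraic equation to get
\[
\mathcal{L}[\widehat{w}](s,\xi)=\frac{s^{\alpha-1}}{s^{\alpha}+i^{-\alpha}|\xi|^{\beta}}\widehat{w_{0}}(\xi)
-\frac{i^{-\alpha}}{s^{\alpha}+i^{-\alpha}|\xi|^{\beta}}\mathcal{L}[\widehat{g(w)}](s,\xi).
\]
Next I would invert the Laplace transform termwise using Proposition \ref{prop:Mittag-Leffer}(ii): with $a=-i^{-\alpha}|\xi|^{\beta}=(-i)^{\alpha}|\xi|^{\beta}$ (noting $i^{-\alpha}=(-i)^{\alpha}$ up to the branch conventions in the paper), the identity $\int_{0}^{\infty}e^{-st}t^{\beta-1}E_{\alpha,\beta}(at^{\alpha})\,dt=\frac{s^{\alpha-\beta}}{s^{\alpha}-a}$ gives $\mathcal{L}^{-1}\big[\frac{s^{\alpha-1}}{s^{\alpha}-a}\big](t)=E_{\alpha,1}(at^{\alpha})$ and $\mathcal{L}^{-1}\big[\frac{1}{s^{\alpha}-a}\big](t)=t^{\alpha-1}E_{\alpha,\alpha}(at^{\alpha})$; the convolution theorem in $t$ then converts the product with $\mathcal{L}[\widehat{g(w)}]$ into the Duhamel integral $\int_{0}^{t}(t-s)^{\alpha-1}E_{\alpha,\alpha}((-i(t-s))^{\alpha}|\xi|^{\beta})\widehat{g(w)}(s,\xi)\,ds$. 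Taking $\mathcal{F}^{-1}$ in $\xi$ and interpreting $E_{\alpha,1}((-it)^{\alpha}D^{\beta})$, $E_{\alpha,\alpha}((-i(t-s))^{\alpha}D^{\beta})$ as the Fourier multipliers with the stated symbols yields \eqref{mild solution}.

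For \eqref{mild solution equivalence}, I would substitute into the kernel level the two expansions recorded in Remark \ref{change variable equaton of Mittag-Leffer function}: for $E_{\alpha,1}$ we get the leading term $\tfrac{1}{\alpha}e^{-it|\xi|^{\delta}}$ plus the integral $-\tfrac{\sin(\alpha\pi)}{\pi}\int_{0}^{\infty}\frac{e^{-rt|\xi|^{\delta}}r^{\alpha-1}}{i^{\alpha}r^{2\alpha}-2r^{\alpha}\cos(\alpha\pi)+(-i)^{\alpha}}\,dr$, and for $(t-s)^{\alpha-1}E_{\alpha,\alpha}((-i(t-s))^{\alpha}|\xi|^{\beta})$ the factor $(t-s)^{\alpha-1}\cdot(t-s)^{1-\alpha}=1$ combines with $|\xi|^{\frac{\beta}{\alpha}(1-\alpha)}=|\xi|^{\delta-\beta}$ (since $\delta-\beta=\tfrac{\beta}{\alpha}-\beta=\tfrac{\beta}{\alpha}(1-\alpha)$) to produce the leading term $|\xi|^{\delta-\beta}e^{-i(t-s)|\xi|^{\delta}}$ and the corresponding $r$-integral with the extra $|\xi|^{\delta-\beta}$ and $r^{\alpha}$ weight. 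Collecting terms, dropping the multiplicative constants $\tfrac{1}{\alpha}$, $\tfrac{\sin(\alpha\pi)}{\pi}$, $(-i)^{1-\alpha}$, $\tfrac{\sin(\pi(1-\alpha))}{\pi}$ as allowed by the "$\sim$" notation, and matching each summand against the listed symbols gives \eqref{mild solution equivalence}.

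The main obstacle is the \emph{bookkeeping of the complex powers and branch cuts}: one must check that $|\arg((-it)^{\alpha}|\xi|^{\beta})|=\alpha\pi/2<\alpha\pi$ so that Proposition \ref{prop:Mittag-Leffer}(i) applies, verify $|s^{-\alpha}a|<1$ holds on the contour used for the inverse Laplace transform (or rather argue by analytic continuation from that region, since the final formula extends by uniqueness of $\mathcal{L}^{-1}$), and track consistently that $i^{-\alpha}$, $(-i)^{\alpha}$, and the $\cos(\alpha\pi)$ cross-term in the denominators arise from the same substitution $r=\tilde r^{\alpha}t^{\alpha}|\xi|^{\beta}$ used in Remark \ref{change variable equaton of Mittag-Leffer function}. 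A secondary point is justifying that all manipulations—Laplace transform, Fubini in the convolution, $\mathcal{F}^{-1}$—are legitimate; this is handled by first working with $w_{0}\in\mathcal{S}(\mathbb{R}^{d})$ and $g(w)\in L^{1}(0,\infty;\mathcal{S}(\mathbb{R}^{d}))$ as in the definitions of the Caputo derivative, where everything converges absolutely, and then noting the formula is the definition of the mild solution in the general case. The dispersive and smoothing content of the kernels $D^{\delta-\beta}e^{-itD^{\delta}}$ is \emph{not} needed here; it is the subject of the subsequent lemmas.
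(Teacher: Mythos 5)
Your proposal follows exactly the paper's own route: Fourier transform in $x$, Laplace transform in $t$ with the Caputo formula, inversion via Proposition \ref{prop:Mittag-Leffer}(ii) to produce the Mittag-Leffler multipliers and the Duhamel convolution, and then substitution of the expansions from Remark \ref{change variable equaton of Mittag-Leffer function} (using $\delta-\beta=\tfrac{\beta}{\alpha}(1-\alpha)$ and the cancellation $(t-s)^{\alpha-1}(t-s)^{1-\alpha}=1$) to obtain \eqref{mild solution equivalence}. The only blemish is a pair of compensating sign slips: the Fourier side of \eqref{Eq:F-T-S-E} should read $i^{\alpha}\partial_{t}^{\alpha}\widehat{w}-|\xi|^{\beta}\widehat{w}=-\widehat{g(w)}$ rather than $+|\xi|^{\beta}\widehat{w}$, and then $a=i^{-\alpha}|\xi|^{\beta}=(-i)^{\alpha}|\xi|^{\beta}$ (not $-i^{-\alpha}|\xi|^{\beta}$); with both signs corrected your denominator coincides with the paper's $s^{\alpha}-(-i)^{\alpha}|\xi|^{\beta}$ and the remainder of your argument goes through verbatim.
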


\begin{proof}
By applying the Fourier transform on E.q.\ref{Eq:F-T-S-E}, we obtain:
\begin{align*}
\begin{cases}
i^{\alpha} \partial_{t}^{\alpha} \mathcal{F}w(t, \xi) - |\xi|^{\beta} \mathcal{F}w(t, \xi) + \mathcal{F}(g(w))(t, \xi) = 0 & \text{in } (0,\infty) \times \mathbb{R}^{d}, \\
\mathcal{F}w(0, \xi) = \mathcal{F}(w_0)(\xi) & \text{in } \mathbb{R}^{d},
\end{cases}
\end{align*}
By applying the Laplace transform, we have:
\begin{align*}
\mathcal{L}(\widehat{w}(s, \xi)) = \frac{s^{\alpha-1}}{s^{\alpha} - {(-i)}^{\alpha} |\xi|^{\beta}}\widehat{w}_0(\xi) + \frac{{(-i)}^{\alpha}}{s^{\alpha} - {(-i)}^{\alpha} |\xi|^{\beta}} \mathcal{L}(\mathcal{F}(g(w)))(s, \xi),
\end{align*}
Additionally, considering Proposition \ref{prop:Mittag-Leffer} and Remark \ref{change variable equaton of Mittag-Leffer function}, we derive that
\begin{align*}
\widehat{w}(t, \xi) &= E_{\alpha,1}({(-it)}^{\alpha} |\xi|^{\beta}) \widehat{w}_0(\xi) + i^{-\alpha} \int_{0}^{t} (t-\tau)^{\alpha-1} E_{\alpha,\alpha}({(-i(t-\tau))}^{\alpha} |\xi|^{\beta}) \mathcal{F}(g(w))(\tau, \xi) \, d\tau \\
&= \left(\frac{1}{\alpha} e^{-it |\xi|^{\delta}} - \frac{\sin(\alpha \pi)}{\pi} \int_{0}^{\infty} \frac{e^{-rt |\xi|^{\delta}} r^{\alpha-1}}{{i}^{\alpha} r^{2\alpha} - 2 r^{\alpha} \cos(\alpha \pi) + {(-i)}^{\alpha}} \, dr\right) \widehat{w}_0(\xi) \\
&\quad + i^{-\alpha} \int_{0}^{t} \frac{1}{\alpha} {(-i)}^{1-\alpha} |\xi|^{\delta-\beta} e^{-i(t-\tau) |\xi|^{\delta}} \mathcal{F}(g(w))(\tau, \xi) \, d\tau \\
&\quad - \frac{\sin(\pi(1-\alpha))}{\pi} \int_{0}^{t} \int_{0}^{\infty} \frac{|\xi|^{\delta-\beta} e^{-r(t-\tau)|\xi|^{\delta}} r^{\alpha}}{{i}^{\alpha} r^{2\alpha} - 2 r^{\alpha} \cos(\alpha \pi) + {(-i)}^{\alpha}} \, dr \, \mathcal{F}(g(w))(\tau, \xi) \, d\tau.
\end{align*}
By applying the inverse Fourier transform and neglecting constants, we obtain that
\begin{align*}
w(t, x) &= E_{\alpha,1}({(-it)}^{\alpha} D^{\beta}) w_0(x) + i^{-\alpha} \int_{0}^{t} (t-s)^{\alpha-1} E_{\alpha,\alpha}({(-i(t-s))}^{\alpha} D^{\beta}) g(w(s, x)) \, ds \\
&\sim \left(e^{-itD^{\delta}} + \int_{0}^{\infty} \frac{e^{-rtD^{\delta}} r^{\alpha-1}}{i^{\alpha} r^{2\alpha} - 2r^{\alpha} \cos(\alpha \pi) + {(-i)}^{\alpha}} \, dr\right) w_0(x) \\
&\quad + i^{-\alpha} \int_{0}^{t} \left(D^{\delta-\beta} e^{-i(t-s)D^{\delta}} + \int_{0}^{\infty} \frac{D^{\delta-\beta} e^{-r(t-s)D^{\delta}} r^\alpha}{i^{\alpha} r^{2\alpha} - 2r^\alpha \cos(\alpha \pi) + {(-i)}^{\alpha}} \, dr\right) g(w(s, x)) \, ds.
\end{align*}
\end{proof}
\begin{definition}\label{mild definition}
Let \(X\) be a Banach space and let \(0 < T \leq \infty\). If a measurable function \(w: [0, T] \rightarrow X\) satisfies the integral equation \eqref{mild solution}, then we call \(u\) a mild solution of Eq. \eqref{Eq:F-T-S-E}. In particular, if \(T = \infty\), then we denote \(u\) as a global mild solution of E.q.\eqref{Eq:F-T-S-E}.
\end{definition}

Based on Lemma \ref{find Mild solution}, we have transformed the two solution operators of Eq.\eqref{Eq:F-T-S-E}, namely \(E_{\alpha,1}({(-it)}^{\alpha} D^{\beta})\) and \(E_{\alpha,\alpha}({(-it)}^{\alpha} D^{\beta})\), into forms corresponding to the fractional heat kernel and the Schr\"{o}dinger kernel. It is noteworthy that for the Schr\"{o}dinger kernels $\exp(-itD^{\delta})$ and \(D^{\delta-\beta} \exp(-itD^{\delta})\), \(L^{p}-L^{q}\) estimates similar to those for the heat kernel cannot be established; see \cite{Miao,Y.Z. Yang}. To address this issue, we need to ues the smoothing effects related to dispersive equations for the Schr\"{o}dinger kernels. The following content can be referred to Kenig \cite[Theorem 4.1]{Kenig1}.

For \(b(x,t) \in L^{\infty}(\mathbb{R}^d \times \mathbb{R})\) and \(g \in \mathcal{S}(\mathbb{R}^d)\), we define
\begin{align}\label{dispersive integral}
W(t)g(x) = \int_{\Omega} e^{i(t\varphi(\xi) + x\xi)} b(x, \varphi(\xi)) \widehat{g}(\xi) \, d\xi,
\end{align}
where \(\Omega \subset \mathbb{R}^d\) is an open set, $\varphi\in C^{1}(\Omega)$ and $\nabla \varphi(\xi)\neq 0$ for $\xi\in\Omega$. Furthermore, if there exists $m\in\mathbb{N}$ such that for any \(r \in \mathbb{R}\), the following system of equations
\begin{align}
\begin{cases}
\varphi(x, \xi_2, \ldots, \xi_d) = r, \\
\varphi(\xi_1, x, \ldots, \xi_d) = r, \\
\vdots \\
\varphi(\xi_1, \xi_2, \ldots, \xi_{k-1}, x, \ldots, \xi_d) = r, \\
\vdots \\
\varphi(\xi_1, \xi_2, \ldots, \xi_{d-1}, x) = r
\end{cases}
\end{align}
has at most \(m\) solutions, then according to Kenig \cite{Kenig1}, we can establish the following smoothing effects.
\begin{align}\label{local smoothing effect}
\int_{|x|<\kappa}\int_{-\infty}^{\infty}\left|W(t)g(x)\right|^{2}dtdx\lesssim\kappa m\int_{\Omega}\frac{|\widehat{g}(\xi)|^{2}}{|\nabla\varphi(\xi)|}d\xi,\quad d\geq 2,
\end{align}
and
\begin{align}
\label{global smoothing effect}\sup_{x}\int_{-\infty}^{\infty}\left|W(t)g(x)\right|^{2}dt&\lesssim\int_{\Omega}
\frac{|\widehat{g}(\xi)|^{2}}{|\varphi'(\xi)|}d\xi,\quad d=1.\\\label{global smoothing effect 1}
\int_{-\infty}^{\infty}\left|W(t)g(x)\right|^{2}dt&=\int_{\Omega}
\frac{|\widehat{g}(\xi)|^{2}}{|\varphi'(\xi)|}d\xi,\quad d=1, \varphi \text{ is invertible and } b\equiv 1.
\end{align}
\begin{remark}
From the above smoothing effects theorem for $d=1$, we have
\begin{enumerate}
\item[\rm(i)]for the classical Schr\"{o}dinger equation solution operator \(e^{it\Delta}\), and the Korteweg-de Vries equation solution operator \(e^{it(-\Delta)^{\frac{3}{2}}}\), we can obtain the following smoothing effects \rm\cite{Kenig}:
\begin{align*}
\left\|\partial_{x}^{\frac{1}{2}}e^{it\partial_{x}^{2}}f\right\|_{L^{\infty}_{x}L^{2}_{t}} \lesssim \left\|f\right\|_{L^{2}_{x}},\quad
\left\|\partial_{x}e^{it\partial_{x}^{3}}f\right\|_{L^{\infty}_{x}L^{2}_{t}} \lesssim \left\|f\right\|_{L^{2}_{x}}.
\end{align*}
\item[\rm(ii)] For the operators \(e^{-itD^{\delta}}\) and \(D^{\delta-\beta} e^{-itD^{\delta}}\) in the sense of \(L^{\infty}_{x}L^{2}_{t}\), there is a loss of derivatives \(D^{\frac{\delta-1}{2}}\) and \(D^{\beta-\frac{\delta+1}{2}}\), respectively.
\end{enumerate}
\end{remark}
Therefore, for $\delta=\beta/\alpha> 1$ and $\varphi(\xi)=-|\xi|^{\delta}$, by using the above smoothing effects and standard Strichartz estimate method for Schr\"{o}dinger operator, refer to \cite{Bahouri} , we can get the following global smoothing effects.
\begin{lemma}\label{smoothing effects 1}
Let $2\beta>\delta+1$, $d=1$, we have the following smoothing estimate for the operator $e^{-itD^{\delta}}$ and $D^{\delta-\beta} e^{-itD^{\delta}}$,
\begin{align}
\label{smoothing 1}\left\|D^{\frac{\delta-1}{2}} e^{-itD^\delta} f\right\|_{L^\infty_x L^2_t} &\lesssim \|f\|_{L^2_x}, \\\label{smoothing 2}
\left\|D^{\beta-\frac{\delta+1}{2}} D^{\delta-\beta} e^{-itD^\delta} f\right\|_{L^\infty_x L^2_t} &\lesssim \|f\|_{L^2_x}, \\\label{smoothing 3}
\left\|D^{\frac{\delta-1}{2}} \int_{\mathbb{R}} e^{itD^\delta} f(t) \, dt\right\|_{L^2_x} &\lesssim \|f\|_{L^1_x L^2_t}, \\\label{smoothing 4}
\left\|D^{\delta-1} \int_{\mathbb{R}} e^{i(t-s)D^\delta} f(s) \, ds\right\|_{L^\infty_xL^{2}_{t}} &\lesssim \|f\|_{L^1_x L^2_t}, \\\label{smoothing 5}
\left\|D^{\beta-\frac{\delta+1}{2}} \int_{\mathbb{R}} D^{\delta-\beta} e^{itD^\delta} f(t) \, dt\right\|_{L^2_x} &\lesssim \|f\|_{L^1_x L^2_t}, \\\label{smoothing 6}
\left\|D^{\frac{\delta-1}{2}}\int_{0}^{t}  e^{-i(t-s)D^\delta} f(s) \, ds\right\|_{L^\infty_x L^2_t} &\lesssim \|f\|_{L^1_x L^2_t}, \\\label{smoothing 7}
\left\|D^{\beta-\frac{\delta+1}{2}} \int_{0}^{t} D^{\delta-\beta} e^{-i(t-s)D^\delta} f(s) \, ds\right\|_{L^\infty_x L^2_t} &\lesssim \|f\|_{L^1_x L^2_t},\\\label{smoothing 8}
\left\|D^{\frac{\delta-1}{2}}\int_{0}^{t}e^{-i(t-s)D^\delta} f(s) \, ds\right\|_{L^\infty_x L^2_T} &\lesssim \|f\|_{L^1_T L^2_x}, \\\label{smoothing 9}
\left\|D^{\beta-\frac{\delta+1}{2}} \int_{0}^{t} D^{\delta-\beta} e^{-i(t-s)D^\delta} f(s) \, ds\right\|_{L^\infty_x L^2_T} &\lesssim \|f\|_{L^1_T L^2_x},\\\label{smooth 1}
\left\|e^{-itD^{\delta}}f\right\|_{L^{\infty}_{t}L^{2}_{x}}&\lesssim\left\|f\right\|_{L^{2}_{x}},\\\label{smooth 2}
\left\|\int_{0}^{t} D^{\delta-\beta} e^{-i(t-s)D^\delta} f(s) \, ds\right\|_{L^\infty_T L^2_x}&\lesssim\left\|D^{\delta-\beta}f\right\|_{L^{1}_{T}L^{2}_{x}}.
\end{align}
\end{lemma}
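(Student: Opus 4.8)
The plan is to obtain the whole list from three ingredients: Kenig's abstract local--smoothing bound \eqref{global smoothing effect}--\eqref{global smoothing effect 1} applied with the radial phase $\varphi(\xi)=\pm|\xi|^{\delta}$, a duality ($TT^{\star}$) argument, and a retardation argument for the Duhamel pieces. The organizing remark is the identity $\bigl(\beta-\tfrac{\delta+1}{2}\bigr)+(\delta-\beta)=\tfrac{\delta-1}{2}$, so that, as Fourier multipliers, $D^{\beta-\frac{\delta+1}{2}}\,D^{\delta-\beta}e^{-itD^{\delta}}=D^{\frac{\delta-1}{2}}e^{-itD^{\delta}}$; the hypotheses $\delta>1$ and $2\beta>\delta+1$ serve only to make the gain $\tfrac{\delta-1}{2}$ and the exponent $\beta-\tfrac{\delta+1}{2}$ strictly positive. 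Hence the ``loss--kernel'' estimates \eqref{smoothing 2}, \eqref{smoothing 5}, \eqref{smoothing 7}, \eqref{smoothing 9} are literally \eqref{smoothing 1}, \eqref{smoothing 3}, \eqref{smoothing 6}, \eqref{smoothing 8} after collapsing the product $D^{\beta-\frac{\delta+1}{2}}D^{\delta-\beta}$, and it suffices to prove \eqref{smoothing 1}, \eqref{smoothing 3}, \eqref{smoothing 4}, \eqref{smoothing 6}, \eqref{smoothing 8}, \eqref{smooth 1}, \eqref{smooth 2}.

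For \eqref{smoothing 1} I would take $\Omega=\mathbb R\setminus\{0\}$, $\varphi(\xi)=-|\xi|^{\delta}$, $b\equiv1$, so that $W(t)g=e^{-itD^{\delta}}g$, $|\varphi'(\xi)|=\delta|\xi|^{\delta-1}\ne0$ on $\Omega$, and for each $r$ the equation $\varphi(\xi)=r$ has at most two solutions; then \eqref{global smoothing effect} gives $\sup_{x}\int_{\mathbb R}|e^{-itD^{\delta}}g(x)|^{2}\,dt\lesssim\int_{\mathbb R}|\widehat g(\xi)|^{2}|\xi|^{-(\delta-1)}\,d\xi$, that is $\|D^{a}e^{-itD^{\delta}}g\|_{L^{\infty}_{x}L^{2}_{t}}\lesssim\|D^{a-\frac{\delta-1}{2}}g\|_{L^{2}_{x}}$ for every $a$ (and the same with $e^{+itD^{\delta}}$, $\varphi(\xi)=|\xi|^{\delta}$); taking $a=\tfrac{\delta-1}{2}$ gives \eqref{smoothing 1}. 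Estimate \eqref{smooth 1} is just the unitarity of $e^{-itD^{\delta}}$ on $L^{2}_{x}$ via Plancherel.

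For \eqref{smoothing 3} I would dualize: $f\mapsto\int_{\mathbb R}e^{itD^{\delta}}f(t)\,dt$ is, up to conjugation, the $L^{2}_{x}/L^{2}_{t,x}$--adjoint of $g\mapsto e^{-itD^{\delta}}g$, and $D^{\frac{\delta-1}{2}}$ is self-adjoint, so \eqref{smoothing 3} is exactly the dual of the $e^{+itD^{\delta}}$--form of \eqref{smoothing 1}. For \eqref{smoothing 4} I would write $\int_{\mathbb R}e^{i(t-s)D^{\delta}}f(s)\,ds=e^{itD^{\delta}}\int_{\mathbb R}e^{-isD^{\delta}}f(s)\,ds$, split $D^{\delta-1}=D^{\frac{\delta-1}{2}}D^{\frac{\delta-1}{2}}$, apply \eqref{smoothing 1} to the outer $e^{itD^{\delta}}$ and \eqref{smoothing 3} to the inner integral. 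The retarded versions \eqref{smoothing 6}--\eqref{smoothing 7} I would deduce by replacing $\int_{\mathbb R}$ with $\int_{0}^{t}$ via the Christ--Kiselev lemma (legitimate because the input exponent $1$ is strictly below the output exponent $2$), the precise power $\tfrac{\delta-1}{2}$ being arranged by real interpolation of \eqref{smoothing 4} against the energy bound in $L^{\infty}_{t}L^{2}_{x}$. Finally \eqref{smoothing 8}, \eqref{smoothing 9}, \eqref{smooth 2} are soft: writing $\int_{0}^{t}=\int_{0}^{T}\mathbf{1}_{\{s<t\}}$ and applying Minkowski's integral inequality in $s$, each slice is controlled by \eqref{smoothing 1} (resp. by Plancherel for \eqref{smooth 2}), and the $s$-integral reproduces $\|f\|_{L^{1}_{T}L^{2}_{x}}$ (resp. $\|D^{\delta-\beta}f\|_{L^{1}_{T}L^{2}_{x}}$).

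The step I expect to be the main obstacle is the retardation \eqref{smoothing 6}--\eqref{smoothing 7}: since the truncation is in the time variable, which sits as the \emph{inner} variable of the mixed norms $L^{1}_{x}L^{2}_{t}$ and $L^{\infty}_{x}L^{2}_{t}$, Christ--Kiselev must be run fibrewise in $x$ (equivalently, via the dyadic decomposition of $\{s<t\}$ and summation of a geometric series), and the drop from the full gain $D^{\delta-1}$ of \eqref{smoothing 4} to $D^{\frac{\delta-1}{2}}$ has to be managed by interpolation against the energy estimate; once these inhomogeneous estimates are in place, everything else is a direct invocation of Kenig's smoothing theorem, a one-line duality, a composition, or a Minkowski/Fubini rearrangement.
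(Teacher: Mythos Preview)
Your plan coincides with the paper's for \eqref{smoothing 1}--\eqref{smoothing 5}, \eqref{smoothing 8}, \eqref{smoothing 9}, \eqref{smooth 1}, \eqref{smooth 2}; the collapse of $D^{\beta-\frac{\delta+1}{2}}D^{\delta-\beta}$ into $D^{\frac{\delta-1}{2}}$ is a tidy observation the paper does not make explicit.  The paper proves \eqref{smoothing 4} by the same $TT^{\star}$/Cauchy--Schwarz splitting you describe, and \eqref{smoothing 8}, \eqref{smoothing 9}, \eqref{smooth 2} by exactly your Minkowski-in-$s$ argument.

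The gap is precisely where you flag it.  Christ--Kiselev cannot be ``run fibrewise in $x$'' for \eqref{smoothing 6}--\eqref{smoothing 7}: the propagator $e^{-i(t-s)D^{\delta}}$ is nondiagonal in $x$, so after the Whitney decomposition of $\{s<t\}$ the pieces do not sum geometrically once the spatial supremum sits outside the $L^{2}_{t}$-norm.  Your interpolation repair also fails as written: \eqref{smoothing 4} lands in $L^{\infty}_{x}L^{2}_{t}$ while the energy bound lands in $L^{\infty}_{t}L^{2}_{x}$, and real interpolation of these two mixed spaces does not return $L^{\infty}_{x}L^{2}_{t}$ with the derivative count $\tfrac{\delta-1}{2}$.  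The paper's own argument for \eqref{smoothing 6}--\eqref{smoothing 7} is only a pointer (``standard dual method, refer to \cite{Bahouri}''), so neither account supplies a mechanism; the Kenig--Ponce--Vega treatment of the analogous retarded Duhamel smoothing for $\varphi(\xi)=\xi^{3}$ in \cite{Kenig} proceeds by a direct kernel computation rather than an abstract truncation lemma, and that is what one would need to reproduce here.  It is worth noting that the paper never invokes \eqref{smoothing 6} or \eqref{smoothing 7} downstream---the local well-posedness proof of Theorem~\ref{local-well-posedness} calls only on the finite-time Minkowski versions \eqref{smoothing 8}--\eqref{smoothing 9}---so this gap does not propagate into the main results.
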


\begin{proof}
According to the smoothing estimates \eqref{global smoothing effect} and \eqref{global smoothing effect 1}, the estimates \eqref{smoothing 1} and \eqref{smoothing 2} are straightforward. By using the standard dual method, refer to \cite{Bahouri}, we can derive the estimates \eqref{smoothing 3}, \eqref{smoothing 5}, \eqref{smoothing 6}, and \eqref{smoothing 7}.

Moreover, for a function $g \in \mathcal{S}(\mathbb{R} \times \mathbb{R})$ with $\|g\|_{L^1_x L^2_t} \leq 1$, by using the Cauchy-Schwarz inequality, we obtain
\begin{align*}
\left|\int_{\mathbb{R}} \left\langle D^{\delta-1} \int_{\mathbb{R}} e^{i(t-s)D^\delta} f(s) \, ds, g(t)\right\rangle dt \right| &=
\left|\int_{\mathbb{R}}\int_{\mathbb{R}}\int_{\mathbb{R}} D^{\delta-1}e^{i(t-s)D^\delta} f(s,x) \overline{g(t,x)} \, ds \, dx \, dt\right| \nonumber \\
&= \left|\left\langle\int_{\mathbb{R}} D^{\frac{\delta-1}{2}}e^{isD^\delta}f(s) \, ds, \int_{\mathbb{R}}D^{\frac{\delta-1}{2}}e^{itD^\delta}g(t) \, dt\right\rangle\right| \nonumber \\
&\lesssim \left\|D^{\frac{\delta-1}{2}} \int_{\mathbb{R}} e^{isD^\delta} f(s,\cdot) \, ds\right\|_{L^2_x}\left\|D^{\frac{\delta-1}{2}} \int_{\mathbb{R}} e^{itD^\delta} g(t,\cdot) \, dt\right\|_{L^2_x} \nonumber \\
&\lesssim \|f\|_{L^1_x L^2_t},
\end{align*}
which implies estimate \eqref{smoothing 4}. Furthermore, following methods akin to Kenig \cite{Kenig}, and noting that the operator $e^{itD^\delta}$ is an identity operator, by the Minkowski inequality, we find that
\begin{align*}
\left\|D^{\frac{\delta-1}{2}}\int_{0}^{t}e^{-i(t-s)D^\delta} f(s) \, ds\right\|_{L^\infty_x L^2_T} &= \left\|D^{\frac{\delta-1}{2}}\int_{0}^{T}e^{-i(t-s)D^\delta} f(s)\chi_{(0,t)}(s) \, ds\right\|_{L^\infty_x L^2_T} \nonumber \\
&\lesssim \int_{0}^{T}\left\|D^{\frac{\delta-1}{2}}e^{-i(t-s)D^\delta} f(s)\chi_{(0,t)}(s)\right\|_{L^\infty_x L^2_T} \, ds \nonumber \\
&\lesssim \int_{0}^{T}\left\|f(s)\right\|_{L^{2}_{x}} \, ds,
\end{align*}
thus proving estimate \eqref{smoothing 8}. The proof of \eqref{smoothing 9} follows identically to that of \eqref{smoothing 8}. Moreover, the estimates \eqref{smoothing 8} and \eqref{smoothing 9} are also founded in \cite{Grande}. Finally, using the Plancherel identity, the estimates \eqref{smooth 1}, \eqref{smooth 2} are obvious.
\end{proof}
\begin{lemma}
Let $\delta>1$, $\beta<2$, $2\beta>\delta+1$, d=1, $\gamma=\frac{\delta-1}{2}$, $\nu=\beta-\frac{\delta+1}{2}$, we have the following smoothing estimate:
\begin{align}
\label{smoothing 10}
\left\|D^{\gamma'}\int_{0}^{\infty}
\frac{e^{-rtD^{\delta}}r^{\alpha-1}}
{i^{\alpha}r^{2\alpha} - 2r^{\alpha}\cos(\alpha\pi) + (-i)^{\alpha}} \, drf(x)\right\|_{L^{\infty}_{x}L^{2}_{T}}&\lesssim T^{\frac{\gamma-\gamma'}{\delta}}\left\|f\right\|_{L^{2}_{x}}, 0<\gamma'<\gamma,\\\label{smoothing 11}
\left\|D^{\nu'}\int_{0}^{t}\int_{0}^{\infty} \frac{D^{\delta-\beta} e^{-r(t-s)D^{\delta}} r^\alpha}{i^{\alpha} r^{2\alpha}
- 2r^\alpha \cos(\alpha \pi) + {(-i)}^{\alpha}} \, drf(s,x)ds\right\|_{L^{\infty}_{x}L^{2}_{T}}&\lesssim T^{\frac{\nu-\nu'}{\delta}}\left\|f\right\|_{L^{1}_{T}L^{2}_{x}}, 0<\nu'<\nu,\\\label{smooth 3}
\left\|\int_{0}^{\infty}
\frac{e^{-rtD^{\delta}}r^{\alpha-1}}
{i^{\alpha}r^{2\alpha} - 2r^{\alpha}\cos(\alpha\pi) + (-i)^{\alpha}} \, drf(x)\right\|_{L^{\infty}_{T}L^{2}_{x}}&\lesssim\left\|f\right\|_{L^{2}_{x}},\\\label{smooth 4}
\left\|\int_{0}^{t}\int_{0}^{\infty} \frac{D^{\delta-\beta} e^{-r(t-s)D^{\delta}} r^\alpha}{i^{\alpha} r^{2\alpha}
- 2r^\alpha \cos(\alpha \pi) + {(-i)}^{\alpha}} \, drf(s,x)ds\right\|_{L^{\infty}_{T}L^{2}_{x}}&\lesssim T^{\frac{3\alpha-2}{4}}\left\|f\right\|_{L^{2}_{T,x}}.
\end{align}
\end{lemma}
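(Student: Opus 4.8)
The plan is to reduce all four estimates to pointwise control of the underlying Fourier multipliers; unlike the Schr\"odinger part treated in Lemma~\ref{smoothing effects 1}, the exponential factor $e^{-rt|\xi|^{\delta}}$ in these fractional‑heat integrals makes a direct multiplier argument feasible. Write $Q_{t}$ and $\mathcal K_{\tau}$ for the operators with symbols
\[
m_{1}(t,\xi)=\int_{0}^{\infty}\frac{e^{-rt|\xi|^{\delta}}r^{\alpha-1}}{i^{\alpha}r^{2\alpha}-2r^{\alpha}\cos(\alpha\pi)+(-i)^{\alpha}}\,dr,\qquad
m_{2}(\tau,\xi)=|\xi|^{\delta-\beta}\int_{0}^{\infty}\frac{e^{-r\tau|\xi|^{\delta}}r^{\alpha}}{i^{\alpha}r^{2\alpha}-2r^{\alpha}\cos(\alpha\pi)+(-i)^{\alpha}}\,dr,
\]
which are exactly the multipliers occurring in \eqref{smoothing 10}--\eqref{smooth 4}. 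First I would record that the denominator $P(r):=i^{\alpha}r^{2\alpha}-2r^{\alpha}\cos(\alpha\pi)+(-i)^{\alpha}$, viewed as a quadratic in $r^{\alpha}$, has roots $e^{i\alpha\pi/2}$ and $e^{-3i\alpha\pi/2}$ (using $i^{\alpha}(-i)^{\alpha}=1$): they lie on the unit circle but, since $0<\alpha<1$, are never positive reals, so $|P(r)|\gtrsim 1+r^{2\alpha}$ on $(0,\infty)$. Inserting this, rescaling $u=rt|\xi|^{\delta}$, and using $r^{\alpha}/(1+r^{2\alpha})\le\min\{r^{\alpha},r^{-\alpha}\}$ together with the analogous bound for $r^{\alpha-1}$, one obtains
\[
|m_{1}(t,\xi)|\lesssim\min\{1,(t|\xi|^{\delta})^{-\alpha}\},\qquad
|m_{2}(\tau,\xi)|\lesssim|\xi|^{\delta-\beta}\min\{(\tau|\xi|^{\delta})^{\alpha-1},(\tau|\xi|^{\delta})^{-\alpha-1}\}.
\]
The identity $\delta\alpha=\beta$ will be used throughout: it forces the two branches of each minimum to balance precisely at $|\xi|=\tau^{-1/\delta}$, and this is what produces the clean $T$‑exponents.

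For the two $L^{\infty}_{x}L^{2}_{T}$ estimates \eqref{smoothing 10} and \eqref{smoothing 11} --- the \textbf{main assertions} --- I would use the one‑dimensional bound $\|g\|_{L^{\infty}_{x}}\lesssim\|\widehat g\|_{L^{1}}$ followed by Cauchy--Schwarz in frequency. For \eqref{smoothing 10} this gives $\|D^{\gamma'}Q_{t}f\|_{L^{\infty}_{x}}\lesssim\big\||\xi|^{\gamma'}m_{1}(t,\xi)\big\|_{L^{2}_{\xi}}\|f\|_{L^{2}_{x}}$; splitting the $\xi$‑integral at $|\xi|=t^{-1/\delta}$, both pieces equal a constant times $t^{-(2\gamma'+1)/(2\delta)}$, the high‑frequency one converging because $\gamma'<\gamma<\beta-\tfrac12$. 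Then $t^{-(2\gamma'+1)/(2\delta)}\in L^{2}_{t}(0,T)$ exactly because $\gamma'<\gamma$, with norm a constant times $T^{(\gamma-\gamma')/\delta}$, which is \eqref{smoothing 10}. For \eqref{smoothing 11} the same computation --- here $\delta\alpha=\beta$ makes the two frequency branches of $|m_{2}|$ meet --- yields $\|D^{\nu'}\mathcal K_{\tau}g\|_{L^{\infty}_{x}}\lesssim\tau^{\alpha-1-(\nu'+\frac12)/\delta}\|g\|_{L^{2}_{x}}$; Minkowski's integral inequality then bounds the Duhamel term by $\int_{0}^{t}(t-s)^{\alpha-1-(\nu'+\frac12)/\delta}\|f(s)\|_{L^{2}_{x}}\,ds$, and Young's convolution inequality $\|\phi*\psi\|_{L^{2}_{t}}\le\|\phi\|_{L^{2}_{t}}\|\psi\|_{L^{1}_{t}}$ closes the estimate, since $t^{\alpha-1-(\nu'+\frac12)/\delta}\in L^{2}_{t}(0,T)$ precisely when $\nu'<\nu$, producing the factor $T^{(\nu-\nu')/\delta}$ against $\|f\|_{L^{1}_{T}L^{2}_{x}}$.

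The two $L^{\infty}_{T}L^{2}_{x}$ estimates are handled directly on the Fourier side. Estimate \eqref{smooth 3} is immediate: by Plancherel, $\|Q_{t}f\|_{L^{2}_{x}}\le\sup_{\xi}|m_{1}(t,\xi)|\,\|f\|_{L^{2}_{x}}\lesssim\|f\|_{L^{2}_{x}}$, uniformly in $t\le T$. For \eqref{smooth 4}, the balancing $\delta\alpha=\beta$ gives $\sup_{\xi}|m_{2}(\tau,\xi)|\lesssim\tau^{\alpha-1}$ (the supremum being attained near $|\xi|\sim\tau^{-1/\delta}$), so Plancherel and Minkowski's integral inequality reduce the Duhamel term to $\int_{0}^{t}(t-s)^{\alpha-1}\|f(s)\|_{L^{2}_{x}}\,ds$, and Cauchy--Schwarz in $s$ over $[0,t]$ --- legitimate since the hypothesis $2\beta>\delta+1$ forces $\alpha>\tfrac12$, hence $2\alpha-2>-1$ --- delivers the stated power of $T$ times $\|f\|_{L^{2}_{T,x}}$ after an elementary time integral.

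The step I expect to be the real obstacle is the first one: verifying that $P(r)$ has no zero on $(0,\infty)$ and, above all, carrying out the symbol analysis cleanly enough to exploit the cancellation $\delta\alpha=\beta$, which is what makes the low‑ and high‑frequency contributions to each $\xi$‑integral balance and turns the raw bounds into the sharp exponents $(\gamma-\gamma')/\delta$ and $(\nu-\nu')/\delta$. Once these multiplier bounds are in place --- together with the elementary remarks that the hypotheses $\delta>1$, $\beta<2$, $2\beta>\delta+1$ already guarantee $\alpha>\tfrac12$, $\beta>1$, $\gamma<\beta-\tfrac12$ and $\nu>0$ --- the remaining work is routine bookkeeping with Plancherel, Minkowski, Young and Cauchy--Schwarz.
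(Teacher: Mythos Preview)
Your approach is correct and coincides with the paper's: both control the heat-type multipliers pointwise (the paper via the $L^{2}\!\to\!L^{\infty}$ heat kernel bound $\|D^{\gamma'}e^{-\sigma D^{\delta}}f\|_{L^{\infty}_{x}}\lesssim\sigma^{-(\gamma'+\frac12)/\delta}\|f\|_{L^{2}_{x}}$ from \cite{Miao}, you via Hausdorff--Young plus Cauchy--Schwarz in $\xi$---the same computation in a different order), then swap to $L^{2}_{T}L^{\infty}_{x}$ by Minkowski and finish with Young or Cauchy--Schwarz in time. Your lower bound $|P(r)|\gtrsim1+r^{2\alpha}$ is precisely what makes the paper's $r$-integrals converge.

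One correction for \eqref{smooth 4}: your Cauchy--Schwarz in $s$ actually gives $\big(\int_{0}^{t}(t-s)^{2\alpha-2}\,ds\big)^{1/2}\sim t^{\alpha-\frac12}$, so you obtain $T^{\alpha-\frac12}$ rather than the stated $T^{(3\alpha-2)/4}$. Since $\alpha-\tfrac12>\tfrac{3\alpha-2}{4}$, your bound is \emph{stronger} for $T\le1$ (the only regime used in Theorem~\ref{local-well-posedness}) but does not literally reproduce the lemma's exponent for $T>1$; your claim that Cauchy--Schwarz ``delivers the stated power of $T$'' is therefore not quite right. The paper reaches $(3\alpha-2)/4$ by extracting only a bounded factor $A^{1-3\alpha/4}e^{-A}$ (with $A=r(t-s)|\xi|^{\delta}$) from the symbol, leaving a milder $(t-s)^{3\alpha/4-1}$ kernel---this requires $\alpha>\tfrac23$, which also follows from $2\beta>\delta+1$ together with $\beta<2$.
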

\begin{proof}
By using the estimate of fractional heat kernel \cite{Miao} and the Minkowski inequality, we can get
\begin{align*}
&\left\|D^{\gamma'}\int_{0}^{\infty}
\frac{e^{-rtD^{\delta}}r^{\alpha-1}}
{i^{\alpha}r^{2\alpha} - 2r^{\alpha}\cos(\alpha\pi) + (-i)^{\alpha}} \, drf(x)\right\|_{L^{\infty}_{x}L^{2}_{T}}\\
&\lesssim
\left\|D^{\gamma'}\int_{0}^{\infty}
\frac{e^{-rtD^{\delta}}r^{\alpha-1}}
{i^{\alpha}r^{2\alpha} - 2r^{\alpha}\cos(\alpha\pi) + (-i)^{\alpha}} \, drf(x)\right\|_{L^{2}_{T}L^{\infty}_{x}}\\
&\lesssim\left\|\int_{0}^{\infty}
\frac{\left\|D^{\gamma'}e^{-rtD^{\delta}}f(\cdot)\right\|_{L^{\infty}_{x}}r^{\alpha-1}}
{i^{\alpha}r^{2\alpha} - 2r^{\alpha}\cos(\alpha\pi) + (-i)^{\alpha}} \, dr\right\|_{L^{2}_{T}}\\
&\lesssim \left\|t^{-\frac{\gamma'}{\delta}-\frac{1}{2\delta}}\int_{0}^{\infty}
\frac{r^{\alpha-1-\frac{\gamma'}{\delta}-\frac{1}{2\delta}}}
{i^{\alpha}r^{2\alpha} - 2r^{\alpha}\cos(\alpha\pi) + (-i)^{\alpha}} \, dr\left\|f\right\|_{L^{2}_{x}}\right\|_{L^{2}_{T}}\\
&\lesssim T^{\frac{\gamma-\gamma'}{\delta}}\left\|f\right\|_{L^{2}_{x}},
\end{align*}
where the integral
$$
\int_{0}^{\infty}
\frac{r^{\alpha-1-\frac{\gamma'}{\delta}-\frac{1}{2\delta}}}
{i^{\alpha}r^{2\alpha} - 2r^{\alpha}\cos(\alpha\pi) + (-i)^{\alpha}} \, dr
$$
is convergent under $\frac{1}{2}<\alpha<1$, this prove the \eqref{smoothing 10}. Similar, for $0<\nu'<\nu$, we have that
\begin{align*}
&\left\|D^{\nu'}\int_{0}^{t}\int_{0}^{\infty} \frac{D^{\delta-\beta} e^{-r(t-s)D^{\delta}} r^\alpha}{i^{\alpha} r^{2\alpha} - 2r^\alpha \cos(\alpha \pi) + {(-i)}^{\alpha}} \, drf(s,x)ds\right\|_{L^{\infty}_{x}L^{2}_{T}}\\
&\lesssim\left\|\int_{0}^{t}\int_{0}^{\infty}\frac{\left\|D^{\delta-\beta+\nu'} e^{-r(t-s)D^{\delta}}f(s,\cdot)\right\|_{L^{\infty}_{x}} r^\alpha}{i^{\alpha} r^{2\alpha} - 2r^\alpha \cos(\alpha \pi) + {(-i)}^{\alpha}} \, drds\right\|_{L^{2}_{T}}\\
&\lesssim\left\|\int_{0}^{t}\int_{0}^{\infty}\frac{(t-s)^{-\frac{\delta+\nu'-\beta}{\delta}-\frac{1}{2\delta}}
r^{\alpha-\frac{\delta+\nu'-\beta}{\delta}-\frac{1}{2\delta}}}{i^{\alpha} r^{2\alpha} - 2r^\alpha \cos(\alpha \pi) + {(-i)}^{\alpha}}dr\left\|f(s,\cdot)\right\|_{L^{2}_{x}}ds\right\|_{L^{2}_{T}}\\
&\lesssim T^{\frac{\nu-\nu'}{\delta}}\left\|f\right\|_{L^{1}_{T}L^{2}_{x}},
\end{align*}
where we use the Young inequality and the integral
$$
\int_{0}^{\infty}\frac{r^{\alpha-\frac{\delta+\nu'-\beta}{\delta}-\frac{1}{2\delta}}}{i^{\alpha} r^{2\alpha} - 2r^\alpha \cos(\alpha \pi) + {(-i)}^{\alpha}}dr
$$
is convergent.
Thus, we derive that \eqref{smoothing 11}. Finally, by using the estimate of fractional heat kernel, the estimates \eqref{smooth 3} is obvious. Moreover, note that condition $2\beta>\delta+1$ ensure that $2/3<\alpha<1$ and combine Plancherel identity, we obtain that
\begin{align*}
&\bigg\|\int_{0}^{t}\int_{0}^{\infty} \frac{D^{\delta-\beta} e^{-r(t-s)D^{\delta}} r^\alpha}{i^{\alpha} r^{2\alpha}
- 2r^\alpha \cos(\alpha \pi) + {(-i)}^{\alpha}} \, drf(s,x)ds\bigg\|_{L^{\infty}_{T}L^{2}_{x}}\\
&\lesssim\bigg\|\int_{0}^{t}\int_{0}^{\infty} \frac{|r^{\frac{1}{\delta}}(t-s)^{\frac{1}{\delta}}\xi|^{\delta-\frac{3\beta}{4}} e^{-r|(t-s)^{\frac{1}{\delta}}\xi|^{\delta}} r^{\frac{7\alpha}{4}-1}}{i^{\alpha} r^{2\alpha}
- 2r^\alpha \cos(\alpha \pi) + {(-i)}^{\alpha}} \, dr(t-s)^{\frac{3\alpha}{4}-1}\widehat{f}(s,\xi)ds\bigg\|_{L^{\infty}_{T}L^{2}_{\xi}}\\
&\lesssim\bigg\|\int_{0}^{t}(t-s)^{\frac{3\alpha}{4}-1}\big\|\hat{f}(s,\xi)\big\|_{L^{2}_{\xi}}ds\bigg\|_{L^{\infty}_{T}}\cdot
\bigg|\int_{0}^{\infty}\frac{r^{\frac{7\alpha}{4}-1}}{i^{\alpha} r^{2\alpha}
- 2r^\alpha \cos(\alpha \pi) + {(-i)}^{\alpha}}dr\bigg|\\
&\lesssim T^{\frac{3\alpha-2}{4}}\left\|f\right\|_{L^{2}_{T,x}},
\end{align*}
where we use $\sup_{x\in\mathbb{R}^{+}}|x|^{\eta}e^{-|x|^{\delta}}<\infty$ for any $\eta>0$. The proof is completed now.
\end{proof}
\begin{remark}
In \eqref{smoothing 11}, if $\nu'=\nu$, we derive that
In particular, by using the fractional heat kernel estimates and Minkowski inequatily, we have
\begin{align*}
&\bigg\|D^{\nu}\int_{0}^{t}\int_{0}^{\infty} \frac{D^{\delta-\beta} e^{-r(t-s)D^{\delta}} r^\alpha}{i^{\alpha} r^{2\alpha} - 2r^\alpha \cos(\alpha \pi) + {(-i)}^{\alpha}} \, drf(s,x)ds\bigg\|_{L^{\infty}_{x}L^{2}_{T}}\\
&\lesssim\bigg\|\int_{0}^{t}\int_{0}^{\infty}\frac{r^{\alpha-\frac{1}{2}}}{i^{\alpha} r^{2\alpha} - 2r^\alpha \cos(\alpha \pi)
+ {(-i)}^{\alpha}}dr(t-s)^{-\frac{1}{2}}\left\|f(s,\cdot)\right\|_{L^{2}_{x}}ds\bigg\|_{L^{2}_{T}}\\
&\lesssim\bigg\|\int_{0}^{t}(t-s)^{-\frac{1}{2}}\big\|f(s,\cdot)\big\|_{L^{2}_{x}}ds\bigg\|_{L^{2}_{T}},
\end{align*}
but because the limit of Hardy-Littlewood inequality, it is not be controlled by $\big\|f\big\|_{L^{1}_{T}L^{2}_{x}}$, hence the condition $\nu'<\nu$ is necessary in \eqref{smoothing 11}.
\end{remark}
To use the embedding properties of non-homogeneous Sobolev spaces, we must prove the following estimates:
\begin{lemma}
Let $\delta>1$, $\beta<2$, $2\beta>\delta+1$, d=1, $\gamma=\frac{\delta-1}{2}$, $\nu=\beta-\frac{\delta+1}{2}$, the following estimates hold:
\begin{align}
\label{smoothing 12}\left\|e^{-itD^{\delta}}f\right\|_{L^{\infty}_{x}L^{2}_{T}}&\lesssim\left(T^{\frac{1}{2}}+1\right) \|f\|_{L^{2}_{x}},\\\label{smoothing 13}
\left\|\int_{0}^{\infty}
\frac{e^{-rtD^{\delta}}r^{\alpha-1}}
{i^{\alpha}r^{2\alpha} - 2r^{\alpha}\cos(\alpha\pi) + (-i)^{\alpha}} \, drf(\cdot)\right\|_{L^{\infty}_{x}L^{2}_{T}}&\lesssim\left(T^{\frac{\gamma-\gamma'}{\delta}}+T^{\frac{1}{2}}\right) \|f\|_{L^{2}_{x}}, 0<\gamma'<\gamma,\\\label{smoothing 14}
\left\|\int_{0}^{t} D^{\delta-\beta} e^{-i(t-s)D^\delta} f(s,\cdot) \, ds\right\|_{L^\infty_x L^2_t} &\lesssim\left(T^{\frac{1}{2}}+1\right)T^{\frac{1}{2}} \|f\|_{L^{2}_{x,T}},\\\label{smoothing 15}
\left\|\int_{0}^{t}\int_{0}^{\infty} \frac{D^{\delta-\beta} e^{-r(t-s)D^{\delta}} r^\alpha}{i^{\alpha} r^{2\alpha}
- 2r^\alpha \cos(\alpha \pi) + {(-i)}^{\alpha}} \, drf(s,\cdot)ds\right\|_{L^{\infty}_{x}L^{2}_{T}}&\lesssim\left(T^{\frac{\nu}{\delta}}+T^{\frac{\nu-\nu'}{\delta}}\right)T^{\frac{1}{2}}
\left\|f\right\|_{L^{2}_{x,T}}.
\end{align}
\end{lemma}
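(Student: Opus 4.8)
Each of \eqref{smoothing 12}--\eqref{smoothing 15} is the corresponding homogeneous smoothing estimate of the two previous lemmas \emph{with the gained derivative removed} (the factor $D^{(\delta-1)/2}$, resp. $D^{\beta-(\delta+1)/2}$), which is exactly the form the non-homogeneous Sobolev embeddings will need. The plan is thus a Littlewood--Paley splitting into low and high frequencies. Fix a radial $\phi\in C_{c}^{\infty}(\mathbb{R})$ with $\phi\equiv 1$ on $\{|\xi|\le 1\}$ and $\operatorname{supp}\phi\subset\{|\xi|\le 2\}$, and decompose the datum ($f$ or $f(s,\cdot)$) as $\phi(D)f+(1-\phi(D))f$. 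On the high-frequency piece $(1-\phi(D))f$ one inserts $D^{-s}(1-\phi(D))\,D^{s}$, where $s$ is the gain furnished by \eqref{smoothing 1}, \eqref{smoothing 9}, \eqref{smoothing 10} or \eqref{smoothing 11} (i.e. $s=\gamma,\nu,\gamma',\nu'$; all positive since $2\beta>\delta+1$). The frequency-truncated Riesz potential $D^{-s}(1-\phi(D))$ has, for $s>0$, an $L^{1}(\mathbb{R})$ convolution kernel (a standard fact, cf.\ \cite{Grafakos}: the symbol is smooth, vanishes near $\xi=0$, and is $O(|\xi|^{-s})$ with all derivatives decaying), so it commutes with the $t$-variable and, by Minkowski's inequality, acts on $L^{\infty}_{x}L^{2}_{T}$ with a $T$-independent norm. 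Hence the high-frequency part of each left-hand side is controlled by the matching homogeneous estimate: by $\|f\|_{L^{2}_{x}}$ for \eqref{smoothing 12}; by $T^{(\gamma-\gamma')/\delta}\|f\|_{L^{2}_{x}}$ for \eqref{smoothing 13}; by $\|f\|_{L^{1}_{T}L^{2}_{x}}\le T^{1/2}\|f\|_{L^{2}_{x,T}}$ for \eqref{smoothing 14}; and by $T^{(\nu-\nu')/\delta}\|f\|_{L^{1}_{T}L^{2}_{x}}\le T^{(\nu-\nu')/\delta}T^{1/2}\|f\|_{L^{2}_{x,T}}$ for \eqref{smoothing 15}.

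On the low-frequency piece $\phi(D)f$ the gained derivatives are useless (the symbol $|\xi|^{s}$ vanishes at the origin), so I argue crudely. Since $\phi(D)$ localises to $|\xi|\le 2$, Bernstein's inequality gives $\|\phi(D)g(t)\|_{L^{\infty}_{x}}\lesssim\|\phi(D)g(t)\|_{L^{2}_{x}}$, and using $\|\,\cdot\,\|_{L^{\infty}_{x}L^{2}_{T}}\le\|\,\cdot\,\|_{L^{2}_{T}L^{\infty}_{x}}$ one gets $\|\phi(D)g\|_{L^{\infty}_{x}L^{2}_{T}}\le T^{1/2}\sup_{t}\|\phi(D)g(t)\|_{L^{2}_{x}}$. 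For \eqref{smoothing 12} and \eqref{smoothing 13} the function $g(t)$ is the propagator applied to $f$, and $\sup_{t}\|\phi(D)g(t)\|_{L^{2}_{x}}\lesssim\|f\|_{L^{2}_{x}}$ by Plancherel: $e^{-itD^{\delta}}$ is unitary, and the Mittag-Leffler type multiplier $\int_{0}^{\infty}e^{-rt|\xi|^{\delta}}r^{\alpha-1}\,(i^{\alpha}r^{2\alpha}-2r^{\alpha}\cos(\alpha\pi)+(-i)^{\alpha})^{-1}dr$ is bounded in $(t,\xi)$ because the $r$-integral of $r^{\alpha-1}/|\,\cdot\,|$ converges for $\tfrac12<\alpha<1$; this gives the extra $T^{1/2}$ terms. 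For \eqref{smoothing 14} one estimates the Duhamel integral by Minkowski in $s$ and uses $\|\phi(\xi)|\xi|^{\delta-\beta}\|_{L^{\infty}}<\infty$ (note $\delta-\beta=\beta(1-\alpha)/\alpha>0$) to obtain $\sup_{t}\|\phi(D)(\cdot)(t)\|_{L^{2}_{x}}\lesssim\int_{0}^{t}\|f(s)\|_{L^{2}_{x}}\,ds\le T^{1/2}\|f\|_{L^{2}_{x,T}}$, so the low-frequency contribution is $\lesssim T\|f\|_{L^{2}_{x,T}}$. For \eqref{smoothing 15} one substitutes $r\mapsto r(t-s)|\xi|^{\delta}$ in the inner integral; the key point is that the identity $\alpha\delta=\beta$ makes the product $|\xi|^{\delta-\beta}\,((t-s)|\xi|^{\delta})^{\alpha-1}$ collapse to the pure Abel weight $(t-s)^{\alpha-1}$, the remaining factor being bounded on $\operatorname{supp}\phi$; this yields $\sup_{t}\|\phi(D)(\cdot)(t)\|_{L^{2}_{x}}\lesssim\int_{0}^{t}(1+(t-s)^{\alpha-1})\|f(s)\|_{L^{2}_{x}}\,ds\lesssim (T^{1/2}+T^{\alpha-1/2})\|f\|_{L^{2}_{x,T}}$ by Cauchy--Schwarz, where $\int_{0}^{t}(t-s)^{2(\alpha-1)}ds<\infty$ forces $\alpha>\tfrac12$, guaranteed by $2\beta>\delta+1$. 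Adding the high- and low-frequency bounds produces the stated estimates.

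The main obstacle is the low-frequency analysis of the Mittag-Leffler type Duhamel operator in \eqref{smoothing 15}: one must control the inner $r$-integral uniformly (splitting at $r=1$ and exploiting the $e^{-r(t-s)|\xi|^{\delta}}$ decay to bound $\int_{1}^{\infty}e^{-r(t-s)|\xi|^{\delta}}r^{-\alpha}dr\lesssim 1+((t-s)|\xi|^{\delta})^{\alpha-1}$) and then recognise that the resulting $\xi$-powers cancel exactly through $\alpha\delta=\beta$, leaving an Abel-type convolution that lies in $L^{2}_{T}$ precisely when $\alpha>1/2$; this is the single place where the structural hypothesis $2\beta>\delta+1$ is genuinely used. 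The $L^{1}$-kernel property of $D^{-s}(1-\phi(D))$ in the high-frequency step is routine but must be invoked, as it is what converts the derivative-gaining homogeneous estimates into the inhomogeneous form required later.
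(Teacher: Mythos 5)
Your proposal is correct and follows essentially the same route as the paper: a low/high-frequency splitting in which the high-frequency piece is reduced to the derivative-gaining smoothing estimates \eqref{smoothing 1}, \eqref{smoothing 9}, \eqref{smoothing 10}, \eqref{smoothing 11} by inserting the bounded multiplier $|\xi|^{-s}(1-\phi(\xi))$, and the low-frequency piece is handled crudely (the paper uses Hausdorff--Young/Plancherel and the fractional heat-kernel $L^{2}$--$L^{\infty}$ bound together with Young's inequality in time, where you use Bernstein and a direct estimate of the symbol). The only substantive deviation is the low-frequency part of \eqref{smoothing 15}, where your Abel-kernel computation (splitting the $r$-integral at $r=1$ and using $\alpha\delta=\beta$) yields the factor $T+T^{\alpha}$ in place of the stated $T^{\frac{\nu}{\delta}+\frac{1}{2}}=T^{\alpha-\frac{1}{2\delta}}$; these agree up to constants only for $T\le 1$, which is, however, the regime in which the lemma is actually applied.
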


\begin{proof}
We choose $\widehat{\chi}\in C^{\infty}_{c}(\mathbb{R})$, hence we have $f=\chi f+\left(1-\chi\right)f$. We derive that
\begin{align*}
\left\|e^{-itD^{\delta}}f\right\|_{L^{\infty}_{x}L^{2}_{T}}&\lesssim\left\|e^{-itD^{\delta}}\left(\chi f+\left(1-\chi\right)f\right)\right\|_{L^{\infty}_{x}L^{2}_{T}}\\
&\lesssim\left\|e^{-itD^{\delta}}\chi f\right\|_{L^{\infty}_{x}L^{2}_{T}}+\left\|e^{-itD^{\delta}}(1-\chi)f\right\|_{L^{\infty}_{x}L^{2}_{T}}\\
&\lesssim\left\|e^{-it|\xi|^{\delta}}\widehat{\chi} \widehat{f}\right\|_{L^{2}_{T}L^{1}_{\xi}}+ \left\|D^{\frac{\delta-1}{2}}e^{-itD^{\delta}}D^{-\frac{\delta-1}{2}}\left(1-\chi\right)f\right\|_{L^{\infty}_{x}L^{2}_{T}}\\
&\lesssim T^{\frac{1}{2}}\left\|f\right\|_{L^{2}_{x}}+\left\|D^{-\frac{\delta-1}{2}}\left(1-\chi\right)f\right\|_{L^{2}_{x}}\\
&\lesssim\left(T^{\frac{1}{2}}+1\right) \|f\|_{L^{2}_{x}},
\end{align*}
where we use the Hausdorff-Young inequality, H\"{o}lder inequality, Plancherel identity the smoothing effect \eqref{smoothing 1}, and the symbol $|\xi|^{-\delta+1}\widehat{\chi}^{c}$ is a $L^{2}$ H\"{o}rmander multiplier, can refer to \cite{Triebel}. The estimate \eqref{smoothing 14} is identical and we omit the details.

Moreover, we have that
\begin{align*}
&\left\|\int_{0}^{\infty}
\frac{e^{-rtD^{\delta}}r^{\alpha-1}}
{i^{\alpha}r^{2\alpha} - 2r^{\alpha}\cos(\alpha\pi) + (-i)^{\alpha}} \, drf(x)\right\|_{L^{\infty}_{x}L^{2}_{T}}\\
&\lesssim\left\|\int_{0}^{\infty}
\frac{e^{-rt|\xi|^{\delta}}\widehat{\chi}(\xi) \widehat{f}(\xi)r^{\alpha-1}}
{i^{\alpha}r^{2\alpha} - 2r^{\alpha}\cos(\alpha\pi) + (-i)^{\alpha}} \, dr\right\|_{L^{2}_{T}L^{1}_{\xi}}\\
&\qquad+\left\|D^{\gamma'}\int_{0}^{\infty}
\frac{e^{-rtD^{\delta}}r^{\alpha-1}}
{i^{\alpha}r^{2\alpha} - 2r^{\alpha}\cos(\alpha\pi) + (-i)^{\alpha}} \, drD^{-\gamma'}(1-\chi)f(x)\right\|_{L^{\infty}_{x}L^{2}_{T}}\\
&\lesssim \left(T^{\frac{\gamma-\gamma'}{\delta}}+T^{\frac{1}{2}}\right) \|f\|_{L^{2}_{x}},
\end{align*}
and
\begin{align*}
&\left\|\int_{0}^{t}\int_{0}^{\infty} \frac{D^{\delta-\beta} e^{-r(t-s)D^{\delta}} r^\alpha}{i^{\alpha} r^{2\alpha}
- 2r^\alpha \cos(\alpha \pi) + {(-i)}^{\alpha}} \, drf(s,\cdot)ds\right\|_{L^{\infty}_{x}L^{2}_{T}}\\
&\lesssim\left\|\int_{0}^{t}\int_{0}^{\infty} \frac{D^{\delta-\beta} e^{-r(t-s)D^{\delta}}\chi f(s,\cdot) r^\alpha}{i^{\alpha} r^{2\alpha}
- 2r^\alpha \cos(\alpha \pi) + {(-i)}^{\alpha}} \, drds\right\|_{L^{2}_{T}L^{\infty}_{x}}\\
&\quad +\left\|D^{\nu}\int_{0}^{t}\int_{0}^{\infty} \frac{D^{\delta-\beta} e^{-r(t-s)D^{\delta}} r^\alpha}{i^{\alpha} r^{2\alpha}
- 2r^\alpha \cos(\alpha \pi) + {(-i)}^{\alpha}} \, drD^{-\nu}\left(1-\chi\right) f(s,\cdot)ds\right\|_{L^{\infty}_{x}L^{2}_{T}}\\
&\lesssim\left\|\int_{0}^{t}\int_{0}^{\infty}\frac{r^{\alpha-\frac{\delta-\beta}{\delta}-\frac{1}{2\delta}}
(t-s)^{-\frac{\delta-\beta}{\delta}-\frac{1}{2\delta}}
\left\|f(s,\cdot)\right\|_{L^{2}_{x}}}{i^{\alpha} r^{2\alpha}
- 2r^\alpha \cos(\alpha \pi) + {(-i)}^{\alpha}}drds\right\|_{L^{2}_{T}}+T^{\frac{\nu-\nu'}{\delta}}\left\|D^{-\nu'}\left(1-\chi\right) f(s,\cdot)\right\|_{L^{1}_{T}L^{2}_{x}}\\
&\lesssim \left(T^{\frac{\nu}{\delta}}+T^{\frac{\nu-\nu'}{\delta}}\right)\left\|f\right\|_{L^{1}_{T}L^{2}_{x}}
\end{align*}
where we use the smoothing estimate \eqref{smoothing 10}, \eqref{smoothing 11}, the H\"{o}rmander multiplier theory, and the integral
$$
\int_{0}^{\infty}
\frac{r^{\alpha-1}}
{i^{\alpha}r^{2\alpha} - 2r^{\alpha}\cos(\alpha\pi) + (-i)^{\alpha}} \, dr,
\int_{0}^{\infty}
\frac{r^{\alpha-\frac{\delta-\beta}{\delta}-\frac{1}{2\delta}}}
{i^{\alpha}r^{2\alpha} - 2r^{\alpha}\cos(\alpha\pi) + (-i)^{\alpha}} \, dr
$$
are convergent.
\end{proof}
Next, we need the following $L^{p}_{x}L^{\infty}_{T}$ estimates.
\begin{lemma}\label{maximum function estimate}
Let $d=1$, $\delta>1$, $\frac{1}{2}<\alpha<1$, $\beta<2$, $\frac{1}{4}\leq s<\frac{1}{2}$, $\frac{1}{p_{s}}=\frac{1}{2}-s$, then we have the following estimates:
\begin{align}
\label{smoothing 16}
\left\|e^{-itD^{\delta}}f\right\|_{L^{p_{s}}_{x}L^{\infty}_{t}}&\lesssim\left\|D^{s}f\right\|_{L^{2}_{x}},\\\label{smoothing 17}
\left\|\int_{0}^{\infty}
\frac{e^{-rtD^{\delta}}r^{\alpha-1}}
{i^{\alpha}r^{2\alpha} - 2r^{\alpha}\cos(\alpha\pi) + (-i)^{\alpha}} \, drf(\cdot)\right\|_{L^{p_{s}}_{x}L^{\infty}_{t}}&\lesssim\left\|D^{s}f\right\|_{L^{2}_{x}},\\\label{smoothing 18}
\left\|\int_{0}^{t} D^{\delta-\beta} e^{-i(t-s)D^\delta} f(s,\cdot) \, ds\right\|_{L^{p_{s}}_{x}L^{\infty}_{T}}&\lesssim T^{\frac{1}{2}}\left\|D^{\delta-\beta+s}f\right\|_{L^{2}_{T,x}},\\\label{smoothing 19}
\left\|\int_{0}^{t}\int_{0}^{\infty} \frac{D^{\delta-\beta} e^{-r(t-\tau)D^{\delta}} r^\alpha}{i^{\alpha} r^{2\alpha}
- 2r^\alpha \cos(\alpha \pi) + {(-i)}^{\alpha}} \, drf(\tau,\cdot)d\tau\right\|_{L^{p_{s}}_{x}L^{\infty}_{T}}&\lesssim T^{\alpha-\frac{1}{2}}\left\|D^{s}f\right\|_{L^{2}_{T,x}}
\end{align}
\end{lemma}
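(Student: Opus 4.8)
As in the proof of Lemma~\ref{smoothing effects 1}, I would establish the two homogeneous bounds \eqref{smoothing 16}--\eqref{smoothing 17} first and then deduce the Duhamel-type bounds \eqref{smoothing 18}--\eqref{smoothing 19} from them. The crux is \eqref{smoothing 16}, a maximal-function estimate of Kenig--Ruiz type for the dispersive group $e^{-itD^{\delta}}$ and the announced generalization of Ponce's oscillatory-integral regularity results \cite{Ponce}. I would prove it by a $TT^{*}$ argument: linearising $\sup_{t}$ at a measurable function $t(\cdot)$, the operator $f\mapsto e^{-it(x)D^{\delta}}D^{-s}f(x)$ has kernel
\begin{align*}
K(x,y)=\int_{\mathbb{R}}e^{i(x-y)\xi-i(t(x)-t(y))|\xi|^{\delta}}\,|\xi|^{-2s}\,d\xi ,
\end{align*}
convergent at $\xi=0$ since $2s<1$ and at infinity by oscillation. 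Estimating $K$ via a dyadic decomposition in $\xi$, the Van der Corput Lemma~\ref{Van der Corput lemma} with $k=2$ (on $|\xi|\sim2^{k}$ one has $|\partial_{\xi}^{2}(\mathrm{phase})|\sim|t(x)-t(y)|\,2^{k(\delta-2)}$), supplemented by nonstationary-phase integration by parts away from the stationary scale, should yield $|K(x,y)|\lesssim|x-y|^{-(1-2s)}$ together with rapid decay for $|x-y|$ large; splitting $K$ into its near-diagonal and far parts and applying Young's inequality (and, at the endpoint $s=1/4$, the one-dimensional Hardy--Littlewood--Sobolev inequality for the far part) then gives $\|TT^{*}\|_{L^{p_{s}'}_{x}\to L^{p_{s}}_{x}}\lesssim1$, i.e.\ \eqref{smoothing 16}. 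A convenient variant first performs a Littlewood--Paley decomposition $f=\sum_{j}\Delta_{j}f$ and, using the scaling $x\mapsto2^{-j}x$, $t\mapsto2^{-j\delta}t$ (consistent precisely because $1/p_{s}=\tfrac12-s$), reduces to the frequency-unit bound, which avoids the singularity of $|\xi|^{-2s}$ at the origin at the price of summing the pieces $e^{-itD^{\delta}}\Delta_{j}f$ inside the $L^{p_{s}}_{x}L^{\infty}_{t}$ norm. \emph{I expect this to be the main obstacle}: controlling the $TT^{*}$-kernel uniformly in $t(\cdot)$ over the whole range $\tfrac14\le s<\tfrac12$, where the stationary point of the phase interacts with the singularity of $|\xi|^{-2s}$, or, in the Littlewood--Paley route, summing the dyadic maximal pieces without destroying the square-summability of the homogeneous Sobolev norm.

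The estimate \eqref{smoothing 17} is soft: since $\sup_{t>0}|e^{-rtD^{\delta}}f(x)|$ does not depend on $r>0$ and equals the fractional-heat maximal function $M_{\delta}f(x)$, which is majorised pointwise by the Hardy--Littlewood maximal function (the kernel $S_{t}$ being dominated by a radially decreasing integrable function, cf.~\cite{Miao}) and hence bounded on $L^{p_{s}}(\mathbb{R})$ for $p_{s}>1$, one obtains
\begin{align*}
\left\|\int_{0}^{\infty}\frac{e^{-rtD^{\delta}}\,r^{\alpha-1}}{i^{\alpha}r^{2\alpha}-2r^{\alpha}\cos(\alpha\pi)+(-i)^{\alpha}}\,dr\;f\right\|_{L^{p_{s}}_{x}L^{\infty}_{t}}
&\lesssim\left(\int_{0}^{\infty}\frac{r^{\alpha-1}\,dr}{\big|i^{\alpha}r^{2\alpha}-2r^{\alpha}\cos(\alpha\pi)+(-i)^{\alpha}\big|}\right)\big\|M_{\delta}f\big\|_{L^{p_{s}}_{x}}\\
&\lesssim\|f\|_{L^{p_{s}}_{x}}\lesssim\|D^{s}f\|_{L^{2}_{x}},
\end{align*}
the $r$-integral converging for $0<\alpha<1$ because the denominator stays bounded below and grows like $r^{2\alpha}$, and the last step being the Sobolev embedding $\dot H^{s}(\mathbb{R})\hookrightarrow L^{p_{s}}(\mathbb{R})$ ($0<s<\tfrac12$, $1/p_{s}=\tfrac12-s$).

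Next, \eqref{smoothing 18} follows from \eqref{smoothing 16} exactly as \eqref{smoothing 8} was derived: write the Duhamel integral as $\int_{0}^{T}e^{-i(t-s)D^{\delta}}\big(D^{\delta-\beta}f(s)\big)\chi_{(0,t)}(s)\,ds$, apply Minkowski's integral inequality in $s$, bound each summand by $\big\|e^{-i\tau D^{\delta}}(D^{\delta-\beta}f(s))\big\|_{L^{p_{s}}_{x}L^{\infty}_{\tau}}\lesssim\|D^{\delta-\beta+s}f(s)\|_{L^{2}_{x}}$ via \eqref{smoothing 16}, and close with Cauchy--Schwarz in $s\in(0,T)$ to produce the factor $T^{1/2}$.

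Finally, for \eqref{smoothing 19} the $L^{\infty}_{T}$-supremum cannot be pulled inside the $\tau$-integral, since $D^{\delta-\beta}e^{-\sigma D^{\delta}}$ is unbounded on $L^{p_{s}}$ as $\sigma\to0^{+}$. I would therefore use the $L^{\infty}_{T}$-valued Sobolev embedding $L^{p_{s}}_{x}(L^{\infty}_{T})\hookleftarrow\dot H^{s}_{x}(L^{\infty}_{T})$ (Hardy--Littlewood--Sobolev applied to $L^{\infty}_{T}$-valued functions), reducing matters to $\big\|\int_{0}^{t}\!\!\int_{0}^{\infty}\frac{r^{\alpha}D^{\delta-\beta}e^{-r(t-\tau)D^{\delta}}}{i^{\alpha}r^{2\alpha}-2r^{\alpha}\cos(\alpha\pi)+(-i)^{\alpha}}\,dr\,h(\tau)\,d\tau\big\|_{L^{2}_{x}L^{\infty}_{T}}\lesssim T^{\alpha-1/2}\|h\|_{L^{2}_{T,x}}$ with $h=D^{s}f$. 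After Plancherel and Minkowski one then invokes $\sup_{\xi}|\xi|^{\theta}e^{-\sigma|\xi|^{\delta}}\lesssim\sigma^{-\theta/\delta}$, together with the extra gain $e^{-\sigma|\xi|^{\delta}}\le e^{-\sigma/2}e^{-\sigma|\xi|^{\delta}/2}$ on $|\xi|\ge1$ to force convergence of the $r$-integral at $r=\infty$, distributing the factor $|\xi|^{\delta-\beta}$ and the powers of $r$ and $(t-\tau)$ so that $\int_{0}^{\infty}r^{\alpha-\rho}/|\cdots|\,dr<\infty$ and $\int_{0}^{t}(t-\tau)^{-\rho}\|h(\tau)\|_{L^{2}_{x}}\,d\tau\lesssim T^{1/2-\rho}\|h\|_{L^{2}_{T,x}}$ with $\rho=1-\alpha<\tfrac12$; the bookkeeping is of the same nature as in the proofs of \eqref{smooth 4} and \eqref{smoothing 11}, the decisive algebraic identity being $(\delta-\beta)/\delta=1-\alpha$. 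The delicate point of \eqref{smoothing 19} is to retain enough of the Gaussian for the $r$-integral to converge without differentiating $f$ by more than $D^{s}$, so that the right-hand side does not degrade below $\|D^{s}f\|_{L^{2}_{T,x}}$.
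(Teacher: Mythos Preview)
Your approach to \eqref{smoothing 16} (a $TT^{*}$ argument targeting the kernel bound $|K(x,y)|\lesssim|x-y|^{2s-1}$, via Van der Corput) and to \eqref{smoothing 18} (Minkowski in the Duhamel variable plus \eqref{smoothing 16}) coincides with the paper's; the paper organises the kernel bound as $\big|\int e^{i(t|\xi|^{\delta}+x\xi)}|\xi|^{-2s}\,d\xi\big|\lesssim|x|^{2s-1}$ by splitting $\xi$-space into three regions rather than a dyadic sum, but this is the same estimate. For \eqref{smoothing 17} your argument is correct and genuinely simpler than the paper's: the paper establishes the pointwise kernel bound $\lesssim|x|^{s-1}$ by a case analysis in $|x|$ and $|\xi|$ and then applies Hardy--Littlewood--Sobolev, whereas your observation that $\sup_{t>0}|e^{-rtD^{\delta}}f(x)|$ is independent of $r$ reduces everything to boundedness of the fractional-heat maximal function on $L^{p_{s}}$ plus Sobolev embedding, bypassing the kernel computation entirely.

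For \eqref{smoothing 19}, however, your sketch has a genuine gap. After the vector-valued Sobolev embedding you need an $L^{2}_{x}L^{\infty}_{T}$ bound, and you propose ``Plancherel and Minkowski'' followed by the scalar estimate $\sup_{\xi}|\xi|^{\delta-\beta}e^{-\sigma|\xi|^{\delta}}\lesssim\sigma^{-(1-\alpha)}$. But Plancherel only identifies $L^{2}_{x}$ with $L^{2}_{\xi}$ at each fixed time; it does \emph{not} carry $\|\cdot\|_{L^{2}_{x}L^{\infty}_{T}}$ to $\|\widehat{\,\cdot\,}\|_{L^{2}_{\xi}L^{\infty}_{T}}$, and Minkowski goes the wrong way ($\|\cdot\|_{L^{2}_{x}L^{\infty}_{T}}\ge\|\cdot\|_{L^{\infty}_{T}L^{2}_{x}}$). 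If instead you bring $\int_{0}^{T}d\tau$ outside first, each piece has infinite $L^{\infty}_{T}$-norm since $D^{\delta-\beta}e^{-r(t-\tau)D^{\delta}}$ blows up as $t\downarrow\tau$. Even granting the reduction, your bookkeeping with $\rho=1-\alpha$ produces the logarithmically divergent integral $\int_{0}^{\infty}r^{2\alpha-1}/|i^{\alpha}r^{2\alpha}-\cdots|\,dr$, and the $e^{-\sigma/2}$ trick does not help because the extra factor $e^{-r(t-\tau)/2}\to1$ as $t-\tau\to0$. The paper proceeds differently: it proves the \emph{physical-space} kernel bound
\[
\Big|\int_{\mathbb{R}}\int_{0}^{\infty}\frac{|\xi|^{\delta-\beta-s}e^{-rt|\xi|^{\delta}}r^{\alpha}}{i^{\alpha}r^{2\alpha}-2r^{\alpha}\cos(\alpha\pi)+(-i)^{\alpha}}\,e^{ix\xi}\,dr\,d\xi\Big|\lesssim t^{\alpha-1}|x|^{s-1},
\]
which cleanly separates the temporal singularity from the spatial convolution with $|x|^{s-1}$; one then bounds $\sup_{t}\int_{0}^{t}(t-\tau)^{\alpha-1}H(\tau,x)\,d\tau$ by Cauchy--Schwarz in $\tau$ (this is precisely where the hypothesis $\alpha>\tfrac12$ is used) and closes with HLS in $x$.
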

\begin{proof}

The proof of \eqref{smoothing 16} follows Kenig \cite{Kenig}. We define the following integral
\begin{align}\label{Kenig integral}
I^{t}_{\vartheta}=\int_{\mathbb{R}}e^{i\left(t|\xi|^{\delta}+x\xi\right)}\frac{d\xi}{|\xi|^{\vartheta}},\quad
\frac{1}{2}\leq\vartheta<1.
\end{align}
Kenig \cite{Kenig} proved that for $\delta=3$, $\left|I^{t}_{\vartheta}\right|\lesssim \left|x\right|^{\vartheta-1}$. Indeed, we can claim for any $\delta>1$, $\left|I^{t}_{\vartheta}\right|\lesssim \left|x\right|^{\vartheta-1}$.

Indeed, for $t=0$, the conclusion is obvious, see \cite{Grafakos}. For $t\neq 0$, by the scaling transform, we only need to verify the case $t=1$. Let us denote the following regions:
\begin{align*}
\Omega_{1}&=\left\{\xi\in\mathbb{R}:|\xi|\lesssim\max\{2,|x|^{-1}\}\right\},\\
\Omega_{2}&=\left\{\xi\in\Omega_{1}^{c}:\left|\delta|\xi|^{\delta-1}+x\right|\leq\frac{1}{\delta-1}|x|\right\},\\
\Omega_{3}&=\mathbb{R}\setminus\left(\Omega_{1}\cup\Omega_{2}\right).
\end{align*}
Thus, we can write $I^{1}_{\vartheta}$ as
$$
I^{1}_{\vartheta}=\sum_{j=1}^{3}\int_{\Omega_{j}}e^{i\left(|\xi|^{\delta}+x\xi\right)}\frac{d\xi}{|\xi|^{\vartheta}}
=\sum_{j=1}^{3}I_{\vartheta,j}^{1}.
$$
First, $\left|I_{\vartheta,1}^{1}\right|\lesssim \left|x\right|^{\vartheta-1}$ is obvious. If $\xi\in\Omega_{2}$, there exist constants $c_{1}, c_{2}$ such that $c_{1}\left|x\right|\leq \left|\xi\right|^{\delta-1}\leq c_{2} \left|x\right|$, hence by the
Lemma \ref{Van der Corput lemma}, we have that
\begin{align*}
\left|\int_{\Omega_{2}}e^{i\left(|\xi|^{\delta}+x\xi\right)}\frac{d\xi}{|\xi|^{\vartheta}}\right|&\lesssim \left(\left|\xi\right|^{\delta-2}\right)^{-\frac{1}{2}}
\left(\left\||\xi|^{-\vartheta}\chi_{|\xi|^{\delta-1}\sim|x|}(\cdot)\right\|_{\infty}
+\left\||\xi|^{-\vartheta-1}\chi_{|\xi|^{\delta-1}\sim|x|}(\cdot)\right\|_{1}\right)\\
&\lesssim \left|x\right|^{\vartheta-1}.
\end{align*}
If $\xi\in\Omega_{3}$, then we get $\left||\xi|^{\delta-1}+x\right|\gtrsim \left||\xi|^{\delta-1}+|x|\right|$, hence by integration by parts, we estimate that
\begin{align*}
\left|\int_{\Omega_{3}}e^{i\left(|\xi|^{\delta}+x\xi\right)}\frac{d\xi}{|\xi|^{\vartheta}}\right|&=
\left|\int_{\Omega_{3}}e^{i\left(|\xi|^{\delta}+x\xi\right)}\frac{d}{d\xi}
\left(\frac{1}{\delta|\xi|^{\delta-1}+x}\frac{1}{|\xi|^{\vartheta}}\right)d\xi\right|\\
&\lesssim\left|\int_{\Omega_{3}}e^{i\left(|\xi|^{\delta}+x\xi\right)}
\left(\frac{|\xi|^{\delta-1}}{\left(\delta|\xi|^{\delta-1}+x\right)^{2}}\frac{1}{|\xi|^{\vartheta+1}}
+\frac{1}{\delta|\xi|^{\delta-1}+x}\frac{1}{|\xi|^{\vartheta+1}}\right)d\xi\right|\\
&\lesssim\left|x\right|^{\vartheta-1}.
\end{align*}
Moreover, by the standard dual method, \eqref{smoothing 16} is equivalent to the following estimate
\begin{align}\label{dual integral}
A = \left\| \int_{\mathbb{R}} D^{-s} e^{itD^{\delta}} f(t, \cdot) dt \right\|_{L^2_x} \lesssim \left\| f \right\|_{L^{p_{s}'}_x L^1_t}.
\end{align}
In fact,
\begin{align*}
A^2 &= \int_{\mathbb{R}} \left( \int_{\mathbb{R}} D^{-s} e^{itD^{\delta}} f(t,x) dt \right) \overline{\left( \int_{\mathbb{R}} D^{-s} e^{it'D^{\delta}} f(t',x) dt' \right)} dx \\
&= \int_{\mathbb{R}} \int_{\mathbb{R}} \left( \int_{\mathbb{R}} D^{-2s} e^{i(t-t')D^{\delta}} \overline{f(t',x)} dt' \right) f(t,x) dt dx \\
&\lesssim \left\| \int_{\mathbb{R}} D^{-2s} e^{i(t-t')D^{\delta}} f(t',x) dt' \right\|_{L^{p_{s}}_x L^\infty_t} \left\| f \right\|_{L^{p_{s}'}_x L^1_t} \\
&\lesssim \left\| \frac{1}{|x|^{1-2s}} \star \int_{\mathbb{R}} f(t', \cdot) dt' \right\|_{L^{p_{s}}_x} \left\| f \right\|_{L^{p_{s}'}_x L^1_t} \\
&\lesssim \left\| f \right\|_{L^{p_{s}'}_x L^1_t}^2,
\end{align*}
where we use the Hardy-Littlewood inequality, hence implying \eqref{smoothing 16}. Additionally, noting that $e^{itD^{\delta}}$ is an unity operator and by Minkowski inequality, we get
\begin{align*}
\left\| \int_{0}^{t} D^{\delta-\beta} e^{-i(t-s)D^\delta} f(s, \cdot) ds \right\|_{L^{p_{s}}_x L^\infty_T} &\lesssim \int_{0}^{T} \left\| D^{\delta-\beta+s} e^{isD^{\delta}} \chi_{(0,t)}(s) f(s, \cdot) \right\|_{L^2_x} ds \\
&\lesssim T^{\frac{1}{2}} \left\| D^{\delta-\beta+s} f \right\|_{L^2_{T,x}},
\end{align*}
hence, this implies that \eqref{smoothing 18}.

Next we prove the estimate \eqref{smoothing 17}. By Hardy-Littlewood inequality, \eqref{smoothing 17} is equivalent to verify
\begin{align}\label{heat estimate 1}
\left|\int_{\mathbb{R}}\int_{0}^{\infty}
\frac{|\xi|^{-s}e^{-rt|\xi|^{\delta}}r^{\alpha-1}}
{i^{\alpha}r^{2\alpha} - 2r^{\alpha}\cos(\alpha\pi) + (-i)^{\alpha}}e^{ix\xi}\, drd\xi\right|\lesssim \left|x\right|^{s-1}.
\end{align}
By scailing transform, we only prove \eqref{heat estimate 1} for $t=1$.

For $x$ is small ($|x|< 1$), we estimate
\begin{align*}
&\left|\int_{\mathbb{R}}\int_{0}^{\infty}
\frac{|\xi|^{-s}e^{-r|\xi|^{\delta}}r^{\alpha-1}}
{i^{\alpha}r^{2\alpha} - 2r^{\alpha}\cos(\alpha\pi) + (-i)^{\alpha}}e^{ix\xi}\, drd\xi\right|\\
&\leq\left|\int_{\left|\xi\right|\leq 1}\int_{0}^{\infty}
\frac{|\xi|^{-s}e^{-r|\xi|^{\delta}}r^{\alpha-1}}
{i^{\alpha}r^{2\alpha} - 2r^{\alpha}\cos(\alpha\pi) + (-i)^{\alpha}}e^{ix\xi}\, drd\xi\right| +\left|\int_{\left|\xi\right|\geq 1}\int_{0}^{\infty}
\frac{|\xi|^{-s}e^{-r|\xi|^{\delta}}r^{\alpha-1}}
{i^{\alpha}r^{2\alpha} - 2r^{\alpha}\cos(\alpha\pi) + (-i)^{\alpha}}e^{ix\xi}\, drd\xi\right|\\
&\lesssim\left|\int_{|\xi|\leq 1}|\xi|^{-s}d\xi\right|+\left|\int_{|\xi|\geq 1}\int_{0}^{\infty}\frac{r^{\alpha-\frac{3}{4\delta}-1}}{i^{\alpha}r^{2\alpha}-2r^{\alpha}\cos(\alpha\pi)+(-i)^{\alpha}}dr|\xi|^{-s-\frac{3}{4}}
\sup_{(r,\xi)\in\mathbb{R}^{+}\times\mathbb{R}}\big(|r^{\frac{1}{\delta}}
\xi|^{\frac{3}{4}}e^{-|r^{\frac{1}{\delta}}\xi|^{\delta}}\big)d\xi\right|\\
&\lesssim\left|\int_{|\xi|\leq 1}|\xi|^{-s}d\xi\right|+\left|\int_{|\xi|\geq 1}|\xi|^{-s-\frac{3}{4}}d\xi\right|\\
&\lesssim 1\lesssim \left|x\right|^{s-1}.
\end{align*}
Furthermore, for $|x|>1$, note that $\sup_{x\in\mathbb{R}^{+}}x^{\eta}e^{-x}<\infty$ for any $\eta>0$,  using partial integration we have
\begin{align*}
&\left|\int_{\left|\xi\right|\geq 1}\int_{0}^{\infty}
\frac{|\xi|^{-s}e^{-r|\xi|^{\delta}}r^{\alpha-1}}
{i^{\alpha}r^{2\alpha} - 2r^{\alpha}\cos(\alpha\pi) + (-i)^{\alpha}}e^{ix\xi}\, drd\xi\right|\\
&=\left|\frac{1}{x}\int_{\left|\xi\right|\geq 1}e^{ix\xi}\frac{d}{d\xi}\int_{0}^{\infty}\frac{|\xi|^{-s}e^{-r|\xi|^{\delta}}r^{\alpha-1}}
{i^{\alpha}r^{2\alpha} - 2r^{\alpha}\cos(\alpha\pi) + (-i)^{\alpha}}\, drd\xi\right|\\
&\lesssim\left|\frac{1}{x}\int_{\left|\xi\right|\geq 1}e^{ix\xi}\int_{0}^{\infty}\left(\frac{|\xi|^{\delta-1-s}e^{-r|\xi|^{\delta}}r^{\alpha}}
{i^{\alpha}r^{2\alpha} - 2r^{\alpha}\cos(\alpha\pi) + (-i)^{\alpha}}+\frac{|\xi|^{-1-s}e^{-r|\xi|^{\delta}}r^{\alpha-1}}
{i^{\alpha}r^{2\alpha} - 2r^{\alpha}\cos(\alpha\pi) + (-i)^{\alpha}}\right)\, drd\xi\right|\\
&\lesssim\left|\frac{1}{x}\int_{\left|\xi\right|\geq 1}e^{ix\xi}\left|\xi\right|^{-s-1}d\xi\cdot\int_{0}^{\infty}\frac{r^{\alpha-1}}{i^{\alpha}r^{2\alpha} - 2r^{\alpha}\cos(\alpha\pi) + (-i)^{\alpha}}dr\right|\lesssim \frac{1}{|x|}\lesssim \left|x\right|^{s-1},
\end{align*}
and
\begin{align*}
&\left|\int_{\left|\xi\right|\leq 1}\int_{0}^{\infty}
\frac{|\xi|^{-s}e^{-r|\xi|^{\delta}}r^{\alpha-1}}
{i^{\alpha}r^{2\alpha} - 2r^{\alpha}\cos(\alpha\pi) + (-i)^{\alpha}}e^{ix\xi}\, drd\xi\right|\\
&\lesssim\left|\int_{\left|\xi\right|\leq \left|x\right|^{-1}}\int_{0}^{\infty}
\frac{|\xi|^{-s}e^{-r|\xi|^{\delta}}r^{\alpha-1}}
{i^{\alpha}r^{2\alpha} - 2r^{\alpha}\cos(\alpha\pi) + (-i)^{\alpha}}e^{ix\xi}\, drd\xi\right|\\
&\quad +
\left|\int_{\left|x\right|^{-1}\leq\left|\xi\right|\leq 1}\int_{0}^{\infty}
\frac{|\xi|^{-s}e^{-r|\xi|^{\delta}}r^{\alpha-1}}
{i^{\alpha}r^{2\alpha} - 2r^{\alpha}\cos(\alpha\pi) + (-i)^{\alpha}}e^{ix\xi}\, drd\xi\right|\\
&\lesssim \left|\int_{|\xi|\leq |x|^{-1}}\left|\xi\right|^{-s}d\xi\right|\\
&\quad+\left|\frac{1}{x}\int_{\left|x\right|^{-1}\leq\left|\xi\right|\leq 1}\int_{0}^{\infty}
\left(\frac{|\xi|^{\delta-s-1}e^{-r|\xi|^{\delta}}r^{\alpha}}
{i^{\alpha}r^{2\alpha} - 2r^{\alpha}\cos(\alpha\pi) + (-i)^{\alpha}}+\frac{|\xi|^{-s-1}e^{-r|\xi|^{\delta}}r^{\alpha-1}}
{i^{\alpha}r^{2\alpha} - 2r^{\alpha}\cos(\alpha\pi) + (-i)^{\alpha}}\right)e^{ix\xi}\, drd\xi\right|\\
&\lesssim \left|\int_{|\xi|\leq |x|^{-1}}\left|\xi\right|^{-s}d\xi\right|+\left|\frac{1}{x}\int_{|x|^{-1}\leq|\xi|\leq 1}\left|\xi\right|^{-s-1}d\xi\right|\lesssim \left|x\right|^{s-1}.
\end{align*}
from above, we have the estimate \eqref{smoothing 17}.

The proof of \eqref{smoothing 19} is similar to that of \eqref{smoothing 17}, hence, we present a brief calculation.

First, we prove that
\begin{align}\label{heat estimate 2}
\left|\int_{\mathbb{R}}\int_{0}^{\infty}\frac{\left|\xi\right|^{\delta-\beta-s}e^{-rt\left|\xi\right|^{\delta}}r^{\alpha}}
{i^{\alpha}r^{2\alpha} - 2r^{\alpha}\cos(\alpha\pi) + (-i)^{\alpha}}e^{ix\xi}drd\xi\right|\lesssim t^{\alpha-1}\left|x\right|^{s-1}.
\end{align}
By the scailing transform, the estimate \eqref{heat estimate 2} is only to verify for $t=1$. The process is similar to the estimate \eqref{heat estimate 1}.

For $x$ is small ($|x|<1$), note that $\frac{\beta}{4}+s<1$, the integral
$$
\int_{0}^{\infty}\frac{r^{\frac{7\alpha}{4}-1}}{i^{\alpha}r^{2\alpha} - 2r^{\alpha}\cos(\alpha\pi) + (-i)^{\alpha}}dr
$$
is convengent,
we have that
\begin{align*}
&\left|\int_{\mathbb{R}}\int_{0}^{\infty}\frac{\left|\xi\right|^{\delta-\beta-s}e^{-r\left|\xi\right|^{\delta}}r^{\alpha}}
{i^{\alpha}r^{2\alpha} - 2r^{\alpha}\cos(\alpha\pi) + (-i)^{\alpha}}e^{ix\xi}drd\xi\right|\\
&\lesssim \left|\int_{\mathbb{R}}\int_{0}^{\infty}\frac{\left|\xi\right|^{-\frac{\beta}{4}-s}
\left(\left|r^{\frac{1}{\delta}}\xi\right|^{\delta-\frac{3\beta}{4}}e^{-r\left|\xi\right|^{\delta}}\right)
r^{\frac{7\alpha}{4}-1}}
{i^{\alpha}r^{2\alpha} - 2r^{\alpha}\cos(\alpha\pi) + (-i)^{\alpha}}e^{ix\xi}drd\xi\right|\\
&\lesssim\left|\int_{\mathbb{R}}e^{ix\cdot\xi}|\xi|^{-\frac{\beta}{4}-s}d\xi\right|\cdot
\left|\sup_{(r,\xi)\in\mathbb{R}^{+}\times\mathbb{R}}
\bigg(|r^{\frac{1}{\delta}}\xi|^{\delta-\frac{3\beta}{4}}e^{-r\left|\xi\right|^{\delta}}\bigg)\right|\\
&\lesssim \left|x\right|^{\frac{\beta}{4}+s-1}\lesssim\left|x\right|^{s-1}.
\end{align*}
Moreover, for $|x|>1$, we also have that
\begin{align*}
&\left|\int_{\mathbb{R}}\int_{0}^{\infty}\frac{\left|\xi\right|^{\delta-\beta-s}e^{-r\left|\xi\right|^{\delta}}r^{\alpha}}
{i^{\alpha}r^{2\alpha} - 2r^{\alpha}\cos(\alpha\pi) + (-i)^{\alpha}}e^{ix\xi}drd\xi\right|\\
&\lesssim\left|\int_{\left|\xi\right|\leq \left|x\right|^{-1}}\int_{0}^{\infty}\frac{\left|\xi\right|^{\delta-\beta-s}e^{-r\left|\xi\right|^{\delta}}r^{\alpha}}
{i^{\alpha}r^{2\alpha} - 2r^{\alpha}\cos(\alpha\pi) + (-i)^{\alpha}}e^{ix\xi}drd\xi\right|\\
&\quad+\left|\int_{\left|x\right|^{-1}\leq\left|\xi\right|\leq 1}\int_{0}^{\infty}\frac{\left|\xi\right|^{\delta-\beta-s}e^{-r\left|\xi\right|^{\delta}}r^{\alpha}}
{i^{\alpha}r^{2\alpha} - 2r^{\alpha}\cos(\alpha\pi) + (-i)^{\alpha}}e^{ix\xi}drd\xi\right|\\
&\quad +\left|\int_{\left|\xi\right|\geq 1}\int_{0}^{\infty}\frac{\left|\xi\right|^{\delta-\beta-s}e^{-r\left|\xi\right|^{\delta}}r^{\alpha}}
{i^{\alpha}r^{2\alpha} - 2r^{\alpha}\cos(\alpha\pi) + (-i)^{\alpha}}e^{ix\xi}drd\xi\right|.
\end{align*}
Similar to the estimate in \eqref{smoothing 17}, we derive the estimate
\begin{align*}
&\left|\int_{\left|\xi\right|\geq 1}\int_{0}^{\infty}\frac{\left|\xi\right|^{\delta-\beta-s}e^{-r\left|\xi\right|^{\delta}}r^{\alpha}}
{i^{\alpha}r^{2\alpha} - 2r^{\alpha}\cos(\alpha\pi) + (-i)^{\alpha}}e^{ix\xi}drd\xi\right|\\
&\lesssim\left|\frac{1}{x}\int_{|\xi|\geq 1}e^{ix\cdot\xi}\int_{0}^{\infty}e^{-r|\xi|^{\delta}}\bigg(\frac{|\xi|^{2\delta-\beta-s-1}r^{\alpha+1}
+|\xi|^{\delta-\beta-s-1}r^{\alpha}}{i^{\alpha}r^{2\alpha}-2r^{\alpha}\cos(\alpha\pi)+(-i)^{\alpha}}\bigg)drd\xi\right|\\
&\lesssim\frac{1}{\left|x\right|}\int_{\left|\xi\right|\geq 1}\frac{d\xi}{\left|\xi\right|^{\beta+s+1}}\int_{0}^{\infty}\frac{r^{\alpha-1}}{i^{\alpha}r^{2\alpha} - 2r^{\alpha}\cos(\alpha\pi) + (-i)^{\alpha}}dr\cdot\sup_{(r,\xi)\in\mathbb{R}^{+}\times[-1,1]^{c}}\big(r^{2}|\xi|^{2\delta}e^{-r|\xi|^{\delta}}
+r|\xi|^{\delta}e^{-r|\xi|^{\delta}}\big)\\
&\lesssim \frac{1}{\left|x\right|}\lesssim\left|x\right|^{s-1},
\end{align*}
and by using Remark \ref{change variable equaton of Mittag-Leffer function}, we get
\begin{align*}
&\left|\int_{\left|\xi\right|\leq \left|x\right|^{-1}}\int_{0}^{\infty}\frac{\left|\xi\right|^{\delta-\beta-s}e^{-r\left|\xi\right|^{\delta}}r^{\alpha}}
{i^{\alpha}r^{2\alpha} - 2r^{\alpha}\cos(\alpha\pi) + (-i)^{\alpha}}e^{ix\xi}drd\xi\right|\\
&\sim\left|\int_{\left|\xi\right|\leq \left|x\right|^{-1}}|\xi|^{-s}
\left(E_{\alpha,\alpha}((-i)^{\alpha}|\xi|^{\beta})-|\xi|^{\delta-\beta}e^{-i|\xi|^{\delta}}\right)e^{ix\xi}d\xi\right|\\
&\lesssim\left|\int_{|\xi|\leq|x|^{-1}}|\xi|^{-s}d\xi\right|
\lesssim\left|x\right|^{s-1},
\end{align*}
and
\begin{align*}
&\left|\int_{\left|x\right|^{-1}\leq|\xi|\leq1}
\int_{0}^{\infty}\frac{\left|\xi\right|^{\delta-\beta-s}e^{-r\left|\xi\right|^{\delta}}r^{\alpha}}
{i^{\alpha}r^{2\alpha} - 2r^{\alpha}\cos(\alpha\pi) + (-i)^{\alpha}}e^{ix\xi}drd\xi\right|\\
&\sim\left|\int_{\left|x\right|^{-1}\leq|\xi|\leq1}|\xi|^{-s}
\left(E_{\alpha,\alpha}((-i)^{\alpha}|\xi|^{\beta})-|\xi|^{\delta-\beta}e^{-i|\xi|^{\delta}}\right)e^{ix\xi}d\xi\right|\\
&\lesssim\left|\frac{1}{x}\int_{|x|^{-1}\leq|\xi|\leq 1}\frac{d}{d\xi}\bigg[|\xi|^{-s}\bigg(\sum_{j=0}\frac{i^{-\alpha j}|\xi|^{\beta j}}{\Gamma(\alpha j+\alpha)}-|\xi|^{\delta-\beta}e^{-i|\xi|^{\delta}}\bigg)\bigg]e^{ix\xi}d\xi\right|\\
&\lesssim\frac{1}{|x|}\left|\int_{|x|^{-1}\leq|\xi|}|\xi|^{-s-1}d\xi\right|\lesssim |x|^{s-1},
\end{align*}
hence we get the estimate \eqref{heat estimate 2}.

Next, by using the Minkovski inequality, the Hardy-Littlewood inequality, we have that
\begin{align*}
&\left\|\int_{0}^{t}\int_{0}^{\infty} \frac{D^{\delta-\beta} e^{-r(t-\tau)D^{\delta}} r^\alpha}{i^{\alpha} r^{2\alpha}
- 2r^\alpha \cos(\alpha \pi) + {(-i)}^{\alpha}} \, drf(\tau,\cdot)d\tau\right\|_{L^{p_{s}}_{x}L^{\infty}_{T}}\\
&\lesssim\int_{0}^{T}\left\|\int_{0}^{\infty} \frac{D^{\delta-\beta} e^{-r(t-\tau)D^{\delta}} r^\alpha}{i^{\alpha} r^{2\alpha}
- 2r^\alpha \cos(\alpha \pi) + {(-i)}^{\alpha}} \, dr\chi_{(0,t)}(\tau)f(\tau,\cdot)\right\|_{L^{p_{s}}_{x}L^{\infty}_{T}}d\tau\\
&\lesssim \int_{0}^{T}(t-\tau)^{\alpha-1}\left\|\left|x\right|^{s-1}\star D^{s}f(\tau,\cdot)\right\|_{L^{p_{s}}_{x}}d\tau\\
&\lesssim T^{\alpha-\frac{1}{2}}\left\|D^{s}f\right\|_{L^{2}_{T,x}},
\end{align*}
hence we get the estimate \eqref{smoothing 19}. The proof is completed.
\end{proof}
Moreover, we can construct the following interpolation estimate.

\begin{lemma}\label{interpolation estimate}
Under the conditions \rm{Lemma \ref{smoothing effects 1}-Lemma \ref{maximum function estimate}} are satisfied, for any $\theta\in\left(0,1\right)$, the following estimates hold:
 \begin{align}
 \label{smoothing 20}
 \left\|e^{-itD^{\delta}}f\right\|_{L^{\frac{p_{s}}{\theta}}_{x}L^{\frac{2}{1-\theta}}_{t}}&\lesssim \left\|D^{\theta s-\left(1-\theta\right)\gamma}f\right\|_{L^{2}_{x}},\\ \label{smoothing 21}
 \left\|\int_{0}^{\infty}
\frac{e^{-rtD^{\delta}}r^{\alpha-1}}
{i^{\alpha}r^{2\alpha} - 2r^{\alpha}\cos(\alpha\pi) + (-i)^{\alpha}} \, drf(x)\right\|_{L^{\frac{p_{s}}{\theta}}_{x}L^{\frac{2}{1-\theta}}_{T}}&\lesssim T^{\frac{(\gamma-\gamma')(1-\theta)}{\delta}}
\left\|D^{\theta s-\left(1-\theta\right)\gamma'}f\right\|_{L^{2}_{x}},\\\label{smoothing 22}
\left\|\int_{0}^{t} D^{\delta-\beta} e^{-i(t-\tau)D^\delta} f(\tau,\cdot) \, d\tau\right\|_{L^{\frac{p_{s}}{\theta}}_{x}L^{\frac{2}{1-\theta}}_{T}}
&\lesssim \left\|D^{\theta\left(\delta-\beta+s\right)-\left(1-\theta\right)\nu}f\right\|_{L^{1}_{T}L^{2}_{x}},\\\label{smoothing 23}\left\|\int_{0}^{t}\int_{0}^{\infty} \frac{D^{\delta-\beta} e^{-r(t-\tau)D^{\delta}} r^\alpha}{i^{\alpha} r^{2\alpha}
- 2r^\alpha \cos(\alpha \pi) + {(-i)}^{\alpha}} \, drf(\tau,\cdot)d\tau\right\|_{L^{\frac{p_{s}}{\theta}}_{x}L^{\frac{2}{1-\theta}}_{T}}&\lesssim T^{\frac{1-2(1-\alpha)\theta}{2}+\frac{(\nu-\nu')(1-\theta)}{\delta}}\left\|D^{\theta s-\left(1-\theta\right)\nu'}f\right\|_{L^{2}_{T,x}}.
\end{align}
\end{lemma}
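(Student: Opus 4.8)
The plan is to read each of \eqref{smoothing 20}--\eqref{smoothing 23} as a real-interpolation consequence of exactly two already-established bounds: one of ``smoothing type'' valued in $L^{\infty}_{x}L^{2}_{T}$ (namely \eqref{smoothing 1}, \eqref{smoothing 10}, \eqref{smoothing 9}, \eqref{smoothing 11} respectively) and one of ``maximal type'' valued in $L^{p_{s}}_{x}L^{\infty}_{T}$ (namely \eqref{smoothing 16}--\eqref{smoothing 19}). For each of the four operators involved---$e^{-itD^{\delta}}$, $\int_{0}^{\infty}\frac{e^{-rtD^{\delta}}r^{\alpha-1}}{i^{\alpha}r^{2\alpha}-2r^{\alpha}\cos(\alpha\pi)+(-i)^{\alpha}}\,dr$, $\mathcal{D}(t)h=\int_{0}^{t}D^{\delta-\beta}e^{-i(t-\tau)D^{\delta}}h(\tau)\,d\tau$, and $\mathcal{E}(t)h=\int_{0}^{t}\int_{0}^{\infty}\frac{D^{\delta-\beta}e^{-r(t-\tau)D^{\delta}}r^{\alpha}}{i^{\alpha}r^{2\alpha}-2r^{\alpha}\cos(\alpha\pi)+(-i)^{\alpha}}\,dr\,h(\tau)\,d\tau$---I would use that $D^{a}$ commutes with the operator in order to recast each endpoint bound as a mapping property between a homogeneous-Sobolev-valued mixed-norm space and $L^{\infty}_{x}L^{2}_{T}$ or $L^{p_{s}}_{x}L^{\infty}_{T}$, then replace $L^{\infty}_{x}L^{2}_{T}$ by $BMO_{x}L^{2}_{T}$ via the trivial embedding $L^{\infty}_{x}L^{2}_{T}\hookrightarrow BMO_{x}L^{2}_{T}$ (with \eqref{BMO estimate} at hand should one prefer a Stein-type analytic family through $D^{i\omega}$), and finally interpolate: on the target by Kenig's identity \eqref{Kenig interpolation} (Proposition \ref{prop:interpolation}(i)), and on the source by $(\dot{H}^{a_{0}}_{2},\dot{H}^{a_{1}}_{2})_{\theta}=\dot{H}^{(1-\theta)a_{0}+\theta a_{1}}_{2}$ together with, for the Duhamel terms, the vector-valued rule $(L^{q}_{T}A_{0},L^{q}_{T}A_{1})_{\theta}=L^{q}_{T}(A_{0},A_{1})_{\theta}$.

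Carrying this out for \eqref{smoothing 20}: \eqref{smoothing 1} says $e^{-itD^{\delta}}\colon\dot{H}^{-\gamma}_{2}\to L^{\infty}_{x}L^{2}_{t}\hookrightarrow BMO_{x}L^{2}_{t}$ and \eqref{smoothing 16} says $e^{-itD^{\delta}}\colon\dot{H}^{s}_{2}\to L^{p_{s}}_{x}L^{\infty}_{t}$; interpolating with parameter $\theta$ sends the source to $\dot{H}^{\theta s-(1-\theta)\gamma}_{2}$ and, by \eqref{Kenig interpolation} with $p=p_{s}$, the target to $L^{p_{s}/\theta}_{x}L^{2/(1-\theta)}_{t}$, which is \eqref{smoothing 20}. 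The same recipe applied to the second operator with \eqref{smoothing 10} (source $\dot{H}^{-\gamma'}_{2}$, operator norm $\lesssim T^{(\gamma-\gamma')/\delta}$) and \eqref{smoothing 17} (source $\dot{H}^{s}_{2}$, norm $\lesssim1$) gives \eqref{smoothing 21}, the interpolated operator norm being $(T^{(\gamma-\gamma')/\delta})^{1-\theta}=T^{(\gamma-\gamma')(1-\theta)/\delta}$; here all time norms are restricted to $[0,T]$.

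For the Duhamel estimates one first notes $D^{\nu}\mathcal{D}=\mathcal{D}D^{\nu}$, so \eqref{smoothing 9} reads $\mathcal{D}\colon L^{1}_{T}\dot{H}^{-\nu}_{x}\to L^{\infty}_{x}L^{2}_{T}$ with norm $\lesssim1$, while the computation behind \eqref{smoothing 18}---before the final Cauchy--Schwarz step that produced the $T^{1/2}\|\cdot\|_{L^{2}_{T,x}}$---in fact yields $\mathcal{D}\colon L^{1}_{T}\dot{H}^{\delta-\beta+s}_{x}\to L^{p_{s}}_{x}L^{\infty}_{T}$ with norm $\lesssim1$; interpolating in the Bochner $L^{1}_{T}$ and in the spatial Sobolev index then gives \eqref{smoothing 22} (no $T$-power). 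For \eqref{smoothing 23} the two available inputs sit over different time integrabilities, so I would first Cauchy--Schwarz \eqref{smoothing 11} up to $\mathcal{E}\colon L^{2}_{T}\dot{H}^{-\nu'}_{x}\to L^{\infty}_{x}L^{2}_{T}$ with norm $\lesssim T^{(\nu-\nu')/\delta+1/2}$, pair it with \eqref{smoothing 19} in the form $\mathcal{E}\colon L^{2}_{T}\dot{H}^{s}_{x}\to L^{p_{s}}_{x}L^{\infty}_{T}$ with norm $\lesssim T^{\alpha-1/2}$, and interpolate; the source index becomes $\theta s-(1-\theta)\nu'$, the target $L^{p_{s}/\theta}_{x}L^{2/(1-\theta)}_{T}$, and the operator norm $\lesssim(T^{(\nu-\nu')/\delta+1/2})^{1-\theta}(T^{\alpha-1/2})^{\theta}$, whose exponent simplifies to $\tfrac{1-2(1-\alpha)\theta}{2}+\tfrac{(\nu-\nu')(1-\theta)}{\delta}$, i.e. \eqref{smoothing 23}.

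The genuinely non-trivial ingredients---the smoothing effects of Lemma \ref{smoothing effects 1}, the Kenig--Ponce maximal/oscillatory estimates of Lemma \ref{maximum function estimate}, and Kenig's $BMO_{x}L^{2}_{t}$ interpolation \eqref{Kenig interpolation}---are all already in hand, so the remaining work is largely bookkeeping of Sobolev indices. The one place I expect friction, and which I regard as the main obstacle, is precisely the \emph{matching} of the two endpoint bounds in each pair onto one and the same operator acting between compatible mixed-norm spaces (this is what forces the sharp $L^{1}_{T}$ reading of \eqref{smoothing 18} and the $T^{1/2}$-upgrade of \eqref{smoothing 11}), together with the density hypotheses required by Proposition \ref{prop:interpolation}(i) for the non-separable endpoints $BMO_{x}L^{2}_{T}$ and $L^{p_{s}}_{x}L^{\infty}_{T}$, and the verification that $T$-dependent operator norms interpolate as the geometric mean $M_{0}^{1-\theta}M_{1}^{\theta}$; all of these are handled exactly as in Kenig \cite{Kenig} and Triebel \cite{Triebel}.
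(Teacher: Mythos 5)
Your proposal is correct and follows essentially the same route as the paper: interpolating, for each operator, the $L^{\infty}_{x}L^{2}_{T}$ smoothing endpoint (passed through $BMO_{x}L^{2}_{T}$ via \eqref{BMO estimate}) against the $L^{p_{s}}_{x}L^{\infty}_{T}$ maximal endpoint using Kenig's identity \eqref{Kenig interpolation}, with the paper implementing this through the explicit Stein analytic families $T_{z}=D^{-sz+(1-z)\gamma}(\cdots)$ rather than abstract operator interpolation between couples. Your two fix-ups for the Duhamel terms --- reading \eqref{smoothing 18} at the $L^{1}_{T}L^{2}_{x}$ stage before the final Cauchy--Schwarz, and upgrading \eqref{smoothing 11} to an $L^{2}_{T}$ source at the cost of $T^{1/2}$ --- are exactly what is needed to reproduce the stated exponents in \eqref{smoothing 22} and \eqref{smoothing 23}.
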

\begin{proof}
The proof follows the Stein's theorem of analytic interpolation, can refer to \cite{Grafakos}.

Define the operator
\begin{align*}
T_{z}f=D^{-sz+\left(1-z\right)\gamma}e^{-itD^{\delta}}f, \text{ for }z\in\mathbb{C}, 0\leq\Re{z}\leq 1.
\end{align*}
We can get
\begin{align*}
T_{i\omega}f&=D^{\gamma}D^{-i\omega\left(s+\gamma\right)}e^{-itD^{\delta}}f,\\
T_{1+i\omega}f&=D^{-s}D^{-i\omega\left(s+\gamma\right)}e^{-itD^{\delta}}f.
\end{align*}
Note that the interpolation
$$
\left( BMO_{x}L_{t}^{2}, L^{p_{s}}_{x}L^{\infty}_{t} \right)_{\theta} = L^{\frac{p_{s}}{\theta}}_{x}L^{\frac{2}{1-\theta}}_{t},
$$
Combine with the estimate \eqref{BMO estimate}, \eqref{smoothing 1}, \eqref{smoothing 16}, we respectively estimate that
\begin{align*}
\left\|T_{i\omega}f\right\|_{BMO_{x}L^{2}_{t}}&\leq C_{\omega,s,\gamma}\left\|D^{\gamma}e^{-itD^{\delta}}f\right\|_{L^{\infty}_{x}L^{2}_{t}}\lesssim\left\|f\right\|_{L^{2}_{x}},\\
\left\|T_{1+i\omega}f\right\|_{L^{p_{s}}_{x}L^{\infty}_{t}}&\lesssim \left\|D^{-s}e^{-itD^{\delta}}D^{-i\omega\left(s+\gamma\right)}f\right\|_{L^{p_{s}}_{x}L^{\infty}_{t}}\lesssim
\left\|D^{-i\omega\left(s+\gamma\right)}f\right\|_{L^{2}_{x}}\lesssim\left\|f\right\|_{L^{2}_{x}},
\end{align*}
By the Stein's theorem of analytic interpolation and \eqref{Kenig interpolation},
we get that
$$
\left\|e^{-itD^{\delta}}f\right\|_{L^{\frac{p_{s}}{\theta}}_{x}L^{\frac{2}{1-\theta}}_{t}}\lesssim \left\|D^{\theta s-\left(1-\theta\right)\gamma}f\right\|_{L^{2}_{x}},
$$
this implies that \eqref{smoothing 20}.

Define the operator
\begin{align*}
T_{z}f=D^{-sz+\left(1-z\right)\gamma'}\int_{0}^{\infty}\frac{e^{-rtD^{\delta}}r^{\alpha-1}}{i^{\alpha}r^{2\alpha}-2r^{\alpha}
\cos(\alpha\pi)+(-i)^{\alpha}}drf, \text{ for }z\in\mathbb{C}, 0\leq\Re{z}\leq 1, 0<\gamma'<\gamma,
\end{align*}
and by using the estimate \eqref{Kenig interpolation}, \eqref{smoothing 10}, \eqref{smoothing 17} and follows the process of \eqref{smoothing 20}, we obtain
\begin{align*}
\left\|T_{i\omega}f\right\|_{BMO_{x}L^{2}_{T}}\leq C_{\omega,s,\gamma}\bigg\|D^{\gamma'}\int_{0}^{\infty}\frac{e^{-rtD^{\delta}}r^{\alpha-1}}{i^{\alpha}r^{2\alpha}-2r^{\alpha}
\cos(\alpha\pi)+(-i)^{\alpha}}drf\bigg\|_{L^{\infty}_{x}L^{2}_{T}}\lesssim T^{\frac{\gamma-\gamma'}{\delta}}\big\|f\big\|_{L^{2}_{x}},\\
\left\|T_{1+i\omega}f\right\|_{L^{p_{s}}_{x}L^{\infty}_{T}}\lesssim\bigg\|D^{-s}\int_{0}^{\infty}
\frac{e^{-rtD^{\delta}}r^{\alpha-1}}{i^{\alpha}r^{2\alpha}-2r^{\alpha}
\cos(\alpha\pi)+(-i)^{\alpha}}drD^{-i\omega(s+\gamma)}f\bigg\|_{L^{p_{s}}_{x}L^{\infty}_{T}}\lesssim\big\|f\big\|_{L^{2}_{x}},
\end{align*}
combine the interpolation $\left( BMO_{x}L_{T}^{2}, L^{p_{s}}_{x}L^{\infty}_{T} \right)_{\theta} = L^{\frac{p_{s}}{\theta}}_{x}L^{\frac{2}{1-\theta}}_{T}$, we can get \eqref{smoothing 21}.

Moreover, define the operator
\begin{align*}
T_{z}f&=D^{-sz+\left(1-z\right)\nu}_{x}\int_{0}^{t} D^{\delta-\beta} e^{-i(t-\tau)D^\delta} f(\tau,\cdot) \, d\tau, \text{ for }z\in\mathbb{C}, 0\leq\Re{z}\leq 1,\\
T_{z}f&=D^{-sz+\left(1-z\right)\nu'}_{x}\int_{0}^{t}\int_{0}^{\infty} \frac{D^{\delta-\beta} e^{-r(t-\tau)D^{\delta}} r^\alpha}{i^{\alpha} r^{2\alpha}
- 2r^\alpha \cos(\alpha \pi) + {(-i)}^{\alpha}} \, drf(\tau,\cdot)d\tau, \text{ for }z\in\mathbb{C}, 0\leq\Re{z}\leq 1.
\end{align*}
combine with \eqref{BMO estimate}, \eqref{smoothing 9}, \eqref{smoothing 11},\eqref{smoothing 18},\eqref{smoothing 19} and the Stein's theorem of analytic interpolation, similar the process \eqref{smoothing 20} and \eqref{smoothing 21},we can get the estimates \eqref{smoothing 22}, \eqref{smoothing 23}.
\end{proof}
At the end of this section, we generalize the dispersive estimates proven by Ponce \cite{Ponce}, which play a crucial role in the proof of well-posedness for the case $\beta > 2$ in Section 4.
\begin{lemma}\label{diepersive estimate}
Let $d\geq1$ and define for any $\varpi\geq 2$
\begin{align*}
S^{\varpi}(x)=\int_{\mathbb{R}^{d}}e^{i\left(\left|\xi\right|^{\varpi}+x\xi\right)}d\xi.
\end{align*}
Then for any $0\leq\frac{\eta}{d}\leq\frac{\varpi}{2}-1$,
\begin{align}\label{dispersive bounded}
\left\|I^{\eta}S^{\varpi}(x)\right\|_{\infty}\leq C_{\varpi,\eta},
\end{align}
where
\begin{align*}
I^{\eta}S^{\varpi}(x)=\int_{\mathbb{R}^{d}}e^{i\left(\left|\xi\right|^{\varpi}+x\xi\right)}\left|\xi\right|^{\eta}d\xi,
\end{align*}
and $C_{\varpi}$ is only depending on $\varpi$.
\end{lemma}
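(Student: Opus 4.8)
The plan is to reduce the $d$-dimensional oscillatory integral to a one-parameter family of one-dimensional ones and then combine the Van der Corput lemma with (repeated) integration by parts, isolating the endpoint $\eta/d=\varpi/2-1$ for separate treatment. First I would pass to polar coordinates $\xi=\rho\omega$, $\rho>0$, $\omega\in S^{d-1}$, writing
$$I^{\eta}S^{\varpi}(x)=\int_{0}^{\infty}e^{i\rho^{\varpi}}\rho^{\eta+d-1}\Big(\int_{S^{d-1}}e^{i\rho\,x\cdot\omega}\,d\omega\Big)d\rho.$$
The inner integral is the Fourier transform of the uniform measure on $S^{d-1}$, namely $c_{d}(\rho|x|)^{-\frac{d-2}{2}}J_{\frac{d-2}{2}}(\rho|x|)$, and by \eqref{Bessel function transform}--\eqref{h function property} this equals $c_{d}'\,\Re\big(e^{i\rho|x|}h(\rho|x|)\big)$ with $|\partial_{t}^{k}h(t)|\lesssim(1+t)^{-\frac{d-1}{2}-k}$ (for $d=1$, $S^{0}$ consists of two points and $h$ is constant, so the formula still applies). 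Putting $R=|x|$ and expanding the real part, it suffices to bound, uniformly in $R\geq0$, integrals of the type
$$J_{\pm}(R)=\int_{0}^{\infty}e^{i(\rho^{\varpi}\pm\rho R)}\,\rho^{\eta+d-1}\,h_{\pm}(\rho R)\,d\rho,$$
where $h_{\pm}$ obeys the same derivative bounds as $h$.

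Second, the piece of $J_{\pm}(R)$ over $\{\rho R\leq1\}$, on which $h_{\pm}(\rho R)=O(1)$, is at most $\int_{0}^{\min(1,1/R)}\rho^{\eta+d-1}\,d\rho\lesssim1$ since $\eta+d-1>-1$; the same estimate disposes of $\{\rho\lesssim1\}$ when $R$ is bounded. On the remaining range $\rho R\gtrsim1$ one has $h_{\pm}(\rho R)\sim(\rho R)^{-\frac{d-1}{2}}$, so the effective amplitude is $g(\rho)\sim R^{-\frac{d-1}{2}}\rho^{\eta+\frac{d-1}{2}}$. For the ``$-$'' sign I would split this range at the stationary frequency $\rho_{c}\sim R^{1/(\varpi-1)}$ of the phase $\rho^{\varpi}-\rho R$. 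On the single dyadic band $\rho\sim\rho_{c}$ we have $|\partial_{\rho}^{2}(\rho^{\varpi}-\rho R)|\sim\rho_{c}^{\varpi-2}$, so Lemma \ref{Van der Corput lemma} with $k=2$ yields $\lesssim\rho_{c}^{-\frac{\varpi-2}{2}}\big(\sup|g|+\int|g'|\big)\lesssim\rho_{c}^{-\frac{\varpi-2}{2}}R^{-\frac{d-1}{2}}\rho_{c}^{\eta+\frac{d-1}{2}}$; substituting $\rho_{c}\sim R^{1/(\varpi-1)}$, the exponent of $R$ simplifies to $\tfrac{1}{\varpi-1}\big(\eta-\tfrac{(\varpi-2)d}{2}\big)$, which is $\leq0$ exactly under the hypothesis $\eta/d\leq\varpi/2-1$. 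On the complement $\rho\gg\rho_{c}$ or $1/R\lesssim\rho\ll\rho_{c}$ the phase is non-stationary with $|\partial_{\rho}(\rho^{\varpi}\pm\rho R)|\gtrsim\max(\rho^{\varpi-1},R)$, so repeated integration by parts (differentiating $g$ and the phase factors, and controlling the boundary contributions at $\rho\sim\rho_{c}$ and $\rho=\infty$) gives quantities that are all $O(1)$ under the same hypothesis; the ``$+$'' sign has no critical point and only improves matters.

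Finally I would observe that when $\eta/d<\varpi/2-1$ the bound $|\partial_{\rho}^{2}\rho^{\varpi}|\sim\rho^{\varpi-2}$ — which does not involve $R$ — already makes the dyadic Van der Corput sum over $\rho\gtrsim1$ convergent, so the $\rho R$ oscillation is never needed. It is precisely at the endpoint $\eta/d=\varpi/2-1$ that this sum diverges logarithmically, and the delicate point is to single out the one band $\rho\sim\rho_{c}$ for the stationary-phase ($k=2$) estimate and handle its complement by first-derivative integration by parts, tracking the powers of $R$ so that the curvature gain $\rho_{c}^{-(\varpi-2)/2}$ exactly balances the amplitude growth $\rho_{c}^{\eta+(d-1)/2}R^{-(d-1)/2}$. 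This endpoint is the main obstacle; the accompanying bookkeeping with the derivatives of the Bessel amplitude $h_{\pm}$ when $d\geq2$ is routine.
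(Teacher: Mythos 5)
Your proposal follows essentially the same route as the paper's proof: after reducing to a radial oscillatory integral via the Bessel-function representation of the sphere measure (the paper reaches the same point through a Littlewood--Paley decomposition, rescaling each dyadic annulus, and the index sets $A_{1},A_{2},A_{3}$), you treat the critical band $\rho\sim\rho_{c}\sim R^{1/(\varpi-1)}$ by Van der Corput with $k=2$ combined with the decay $|h(\rho R)|\lesssim(\rho R)^{-\frac{d-1}{2}}$, and the non-stationary complement by repeated integration by parts; your exponent computation $\eta-\frac{(\varpi-2)d}{2}\le 0$ matches exactly the paper's balance of $2^{-j\varpi d/2}$ against $2^{j(\eta+d)}$ on $A_{3}$. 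Two slips, neither fatal. First, for $R\ll 1$ the region $1\lesssim\rho\le 1/R$ is not covered by your absolute-value bound $\int_{0}^{\min(1,1/R)}\rho^{\eta+d-1}\,d\rho$ (taking absolute values over all of $\{\rho R\le 1\}$ would produce $R^{-(\eta+d)}$, which blows up); you must apply the same non-stationary integration by parts there, which works since $|\partial_{\rho}(\rho^{\varpi}\pm\rho R)|\gtrsim\rho^{\varpi-1}\gtrsim 1$ on that set. Second, the closing observation --- that for $\eta/d<\varpi/2-1$ the curvature bound $|\partial_{\rho}^{2}\rho^{\varpi}|\sim\rho^{\varpi-2}$ alone makes the dyadic Van der Corput sum over $\rho\gtrsim 1$ converge --- is correct only for $d=1$: for $d\ge 2$ the band $\rho\sim 2^{j}$ contributes $2^{j(\eta+d-1-\frac{\varpi-2}{2})}$ (or $R^{-\frac{d-1}{2}}2^{j(\eta+\frac{d-1}{2}-\frac{\varpi-2}{2})}$ if the Bessel decay is retained), which sums only in the strictly smaller ranges $\eta<\varpi/2-d$ (resp.\ $\eta<(\varpi-d-1)/2$). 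Hence in higher dimensions the stationary-band analysis is needed throughout $0\le\eta\le d(\varpi/2-1)$, not merely at the endpoint; since your second paragraph carries out that analysis unconditionally, the proof itself stands.
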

\begin{remark}
If $\varpi=3$, $d=1$, $S^{\varpi}(x)$ agrees with the Airy function $Ai(x)$.
\end{remark}
\begin{proof}
For $d=1$, the proof can be founded in Ponce \cite{Ponce}, hence we only verify for $d\geq 2$.

The proof depends the Van der Corput Lemma, the properties of the Bessel function and the Fa\'{a} di Bruno's formula.

We choose a $C^{\infty}(\mathbb{R}^{d})$ radial function $\chi_{0}$ such that
$\chi_{0}(\xi)=
\begin{cases}
0,\quad \left|\xi\right|<1,\\
1,\quad \left|\xi\right|\geq 2,
\end{cases}$
hence we only need to verify that
\begin{align*}
H^{\varpi}(x)=\left|\int_{\mathbb{R}^{d}}e^{i\left(\left|\xi\right|^{\varpi}+x\xi\right)}
\chi_{0}(\xi)\left|\xi\right|^{\eta}d\xi\right|\leq C.
\end{align*}
By using the Littlewood-Paley decomposition and using the changes variable, we get that
\begin{align*}
\left\|H^{\varpi}\left(\cdot\right)\right\|_{\infty}&\lesssim\sup_{x\in\mathbb{R}^{d}}\sum_{j\in\mathbb{Z}}
\left|\int_{\mathbb{R}^{d}}e^{i\left(\left|\xi\right|^{\varpi}+x\xi\right)}
\chi_{0}(\xi)\left|\xi\right|^{\eta}\Phi\left(2^{-j}\xi\right)d\xi\right|\\
&\lesssim\sup_{x\in\mathbb{R}^{d}}\sum_{j\in\mathbb{Z}}2^{j\left(\eta+d\right)}
\left|\int_{\mathbb{R}^{d}}e^{i\left(2^{j\varpi}\left|\xi\right|^{\varpi}+x\cdot2^{j}\xi\right)}
\chi_{0}(2^{j}\xi)\left|\xi\right|^{\eta}\Phi\left(\xi\right)d\xi\right|\\
&=\sup_{x\in\mathbb{R}^{d}}\sum_{j=0}2^{j\left(\eta+d\right)}
\left|\int_{\mathbb{R}^{d}}e^{i\left(2^{j\varpi}\left|\xi\right|^{\varpi}+x\cdot2^{j}\xi\right)}
\chi_{0}(2^{j}\xi)\left|\xi\right|^{\eta}\Phi\left(\xi\right)d\xi\right|\\
&=\sup_{x\in\mathbb{R}^{d}}\sum_{j=0}2^{j\left(\eta+d\right)}
\int_{S^{d-1}}\int_{0}^{\infty}e^{i\left(2^{j\varpi}r^{\varpi}+2^{j}r\left(x,\xi'\right)\right)}
\chi_{0}(2^{j}r)r^{\eta+d-1}\Phi\left(r\right)drd\mu(\xi'),
\end{align*}
where $\mu(\cdot)$ is the measure of the surface $S^{d-1}$.

We need to estimate
\begin{align}\label{need estimate 1 }
\sum_{j=0}2^{j\left(\eta+d\right)}
\int_{0}^{\infty}e^{i\left(2^{j\varpi}r^{\varpi}+2^{j}r\left(x,\xi'\right)\right)}
\chi_{0}(2^{j}r)r^{\eta+d-1}\Phi\left(r\right)dr
\end{align}
Note that $r\in\left(\frac{1}{2},2\right)$, and for the function $g(r)=2^{j\varpi}r^{\varpi}+2^{j}r\left(x,\xi'\right)$,
$g'(r)\sim 2^{j\varpi}r^{\varpi-1}+2^{j}\left(x,\xi'\right)$. We can choose constant $c_{1}$, $c_{2}$, and get that
\begin{align*}
&\sum_{j=0}2^{j\left(\eta+d\right)}
\int_{0}^{\infty}e^{i\left(2^{j\varpi}r^{\varpi}+2^{j}r\left(x,\xi'\right)\right)}
\chi_{0}(2^{j}r)r^{\eta+d-1}\Phi\left(r\right)dr\\
&=\sum_{k=1}^{3}\sum_{j\in A_{k}}2^{j\left(\eta+d\right)}
\int_{0}^{\infty}e^{i\left(2^{j\varpi}r^{\varpi}+2^{j}r\left(x,\xi'\right)\right)}
\chi_{0}(2^{j}r)r^{\eta+d-1}\Phi\left(r\right)dr,
\end{align*}
where
\begin{align*}
A_{1}&=\left\{j\geq 0,  c_{1}2^{j\left(\varpi-1\right)}\geq \left|x\right|\right\},\\
A_{2}&=\left\{j\geq 0,  c_{2}2^{j\left(\varpi+1\right)}\leq \left|x\right|\right\},\\
A_{3}&=\left\{j\geq 0,  c_{1}2^{j\left(\varpi-1\right)}\leq \left|x\right|\leq c_{2}2^{j\left(\varpi+1\right)} \right\}.
\end{align*}

We make sequential estimates for $j \in A_i$ ($i=1,2,3$).

For $j\in A_{1}$, denote the operator $L$:
$$
L=\frac{1}{ig'(r)}\frac{d}{dr},\text{ then we have }L\left(e^{ig(r)}\right)=e^{ig(r)},
$$

We denote that $S_{j}(r)=\chi_{0}(2^{j}r)r^{\eta+d-1}\Phi\left(r\right)$, obviously, $S_{j}(r)=0$ for $j\geq 2$. We derive that
$$
\left|\partial_{r}^{k}S_{j}(r)\right|\lesssim 1, \text{ for any }k\in\mathbb{Z}^{+}.
$$
Moreover, note that $\left|x\right|\lesssim 2^{j\left(\varpi-1\right)}$, we derive that
$$
\left|g'(r)\right|\gtrsim 2^{j\varpi}\text{ and }\left|g^{(k)}(r)\right|\lesssim 2^{j\varpi}\text{ for any }k\in\mathbb{Z}^{+}\text{ and }k\geq 2.
$$
By using the Fa\'{a} di Bruno's formula, we get that
\begin{align*}
\left|D^{k}\frac{1}{g'(r)}\right|\lesssim \left|\sum_{\substack {b_{1}+b_{2}+\cdot\cdot\cdot+b_{l}=k\\ b_{j}\geq 0,1\leq l\leq k}}
\prod_{j=1}^{l}\left(\partial^{b_{j}}g'(r)\right)\frac{1}{\left(g'(r)\right)^{l+1}}\right|\lesssim 2^{-j\varpi},
\end{align*}
hence we get that for any $N\in\mathbb{Z}^{+}$,
\begin{align*}
\left|\int_{0}^{\infty}e^{i\left(2^{j\varpi}r^{\varpi}+2^{j}r\left(x,\xi'\right)\right)}
\chi_{0}(2^{j}r)r^{\eta+d-1}\Phi\left(r\right)dr\right|&=\left|\int_{0}^{\infty}L^{N}\left(e^{ig(r)}\right)S_{j}(r)dr\right|\\
&=\left|\int_{0}^{\infty}e^{ig(r)}\left(L^{*}\right)^{N}S_{j}(r)dr\right|,
\end{align*}
where
\begin{align*}
\left(L^{*}\right)^{N}S_{j}(r)&=\frac{1}{-i}\left(L^{*}\right)^{N-1}\left\{\frac{d}{dr}\left(\frac{1}{g'(r)}S_{j}(r)\right)\right\}\\
&=\left(\frac{1}{-i}\right)^{2}\left(L^{*}\right)^{N-2}\left\{\frac{d}{dr}\left(\frac{1}{g'(r)}\frac{d}{dr}
\left(\frac{1}{g'(r)}S_{j}(r)\right)\right)\right\}\\
&=\cdot\cdot\cdot\\
&\sim\sum_{_{\alpha+\beta=N}}\sum_{\substack{b_{1}+b_{2}+\cdot\cdot\cdot+b_{l}=\alpha\\b_{j}\geq0,1\leq l\leq N}}\prod_{j=1}^{N}
\left(\frac{d^{b_{j}}}{dr^{b_{j}}}\left(\frac{1}{g'(r)}\right)\right)\frac{d^{\beta}}{dr^{\beta}}S_{j}(r),
\end{align*}
hence we get
\begin{align*}
\left|\int_{0}^{\infty}e^{ig(r)}\left(L^{*}\right)^{N}S_{j}(r)dr\right|\lesssim 2^{-j\varpi N}, \text{ for any }N\in\mathbb{Z}^{+}.
\end{align*}

For $j\in A_{2}$, similar to the $j\in A_{1}$, we can get the same estiamtes, hence we omit the details.

For $ j \in A_{3} $, the above method of integration by parts is no longer applicable, because there exists $ x_{0} \in \left( c_{1} 2^{j (\varpi - 1)}, c_{2} 2^{j (\varpi + 1)} \right) $ such that $ g'(r) = 0 $. Therefore, we need to employ Van der Corput Lemma.

By the Fourier transform formula on $ S^{d-1} $, as referenced in \cite{Grafakos}, and by using the properties of the Bessel function \eqref{Bessel function transform}, \eqref{h function property}, we obtain that
\begin{align*}
&\int_{S^{d-1}}\int_{0}^{\infty}e^{i\left(2^{j\varpi}r^{\varpi}+2^{j}r\left(x,\xi'\right)\right)}
\chi_{0}(2^{j}r)r^{\eta+d-1}\Phi\left(r\right)drd\mu(\xi')\\
&=\int_{S^{d-1}}\int_{0}^{\infty}e^{ig(r)}
\chi_{0}(2^{j}r)r^{\eta+d-1}\Phi\left(r\right)drd\mu(\xi')\\
&=c_{d}\int_{0}^{\infty}\chi_{0}(2^{j}r)r^{\eta+d-1}\Phi\left(r\right)e^{i2^{j\varpi}r^{\varpi}}\left(2^{j}rs\right)^{-\frac{d-2}{2}}
J_{\frac{d-2}{2}}\left(2^{j}rs\right)dr\quad \left(s=\left|x\right|\right)\\
&=c_{d}\int_{0}^{\infty}\chi_{0}(2^{j}r)r^{\eta+d-1}\Phi\left(r\right)e^{i2^{j\varpi}r^{\varpi}}\Re \left(e^{i2^{j}rs}h\left(2^{j}rs\right)\right)dr\\
&\sim c_{d}\int_{0}^{\infty}\chi_{0}(2^{j}r)r^{\eta+d-1}\Phi\left(r\right)e^{i2^{j\varpi}r^{\varpi}+i2^{j}rs}h\left(2^{j}rs\right)dr\\
&\qquad+c_{d}\int_{0}^{\infty}\chi_{0}(2^{j}r)r^{\eta+d-1}\Phi
\left(r\right)e^{i2^{j\varpi}r^{\varpi}-i2^{j}rs}\overline{h}\left(2^{j}rs\right)dr.
\end{align*}
Note $r\in\left(\frac{1}{2},2\right)$, $s\in\left(c_{1}2^{j\left(\varpi-1\right)},c_{2}2^{j\left(\varpi+1\right)}\right)$ and using the lemma \ref{Van der Corput lemma} and equation \eqref{h function property}, we get
\begin{align*}
&\int_{S^{d-1}}\int_{0}^{\infty}e^{i\left(2^{j\varpi}r^{\varpi}+2^{j}r\left(x,\xi'\right)\right)}
\chi_{0}(2^{j}r)r^{\eta+d-1}\Phi\left(r\right)drd\mu(\xi')\\
&\lesssim \left(2^{j\varpi}\right)^{-\frac{1}{2}}\left\{\left\|S_{j}(r)h\left(2^{j}rs\right)\right\|_{\infty}+
\left\|\partial\left(S_{j}(r)h\left(2^{j}rs\right)\right)\right\|_{L^{1}}\right\}\\
&\lesssim  \left(2^{j\varpi}\right)^{-\frac{1}{2}}\left(2^{j}\right)^{-\frac{d-1}{2}}\left(2^{j(\varpi-1)}\right)^{-\frac{d-1}{2}}\\
&\lesssim 2^{\frac{-j\varpi d}{2}}.
\end{align*}
If $j\in A_{3}$, we can get
$$
\frac{1}{\varpi+1}\log_{2}\left(\frac{\left|x\right|}{c_{2}}\right)\leq j\leq\frac{1}{\varpi-1}\log_{2}\left(\frac{\left|x\right|}{c_{1}}\right),
$$
hence we can choose the constant $M$ which is only dependent of $\varpi$, $c_{1}$ and $c_{2}$ such that $\left|A_{3}\right|\leq M$.
From above discuss, we can choose the constan $N\in\mathbb{Z}^{+}$ large enough and for $0\leq\frac{\eta}{d}\leq\frac{\varpi}{2}-1$,
we get
\begin{align*}
&\sum_{j=0}2^{j\left(\eta+d\right)}
\int_{S^{d-1}}\int_{0}^{\infty}e^{i\left(2^{j\varpi}r^{\varpi}+2^{j}r\left(x,\xi'\right)\right)}
\chi_{0}(2^{j}r)r^{\eta+d-1}\Phi\left(r\right)drd\mu(\xi')\\
&\lesssim \sum_{j\in A_{1}}2^{j\left(\eta+d-\varpi N\right)}+\sum_{j\in A_{2}}2^{j\left(\eta+d-\varpi N\right)}+\sum_{j\in A_{3}}2^{j\left(\eta+d-\frac{\varpi d}{2}\right)}\\
&\lesssim 1, \text{ for }0\leq\frac{\eta}{d}\leq\frac{\varpi}{2}-1.
\end{align*}
The proof is completed.
\end{proof}
\begin{remark}\label{scaling disoersive estimates}
When the condition of Lemma \ref{diepersive estimate} is satisfied, by the scaling transform, we have
\begin{align}\label{dispersive estimate with t}
\left|\int_{\mathbb{R}^{d}}e^{i\left(t\left|\xi\right|^{\varpi}+x\xi\right)}\left|\xi\right|^{\eta}d\xi\right|\lesssim
t^{-\frac{d+\eta}{\varpi}}\left\|I^{\eta}S\left(t^{-\frac{1}{\varpi}}x\right)\right\|_{\infty}\lesssim t^{-\frac{d+\eta}{\varpi}}.
\end{align}
\end{remark}
\begin{remark}\label{operator holder estimate}
From Lemma {\rm\ref{diepersive estimate}} and Remark {\rm\ref{scaling disoersive estimates}}, it is easy to see
\begin{align}\label{holder estimate}
\big\|I^{\eta}S^{\varpi}(x)\big\|_{\dot{C}^{\varrho}(\mathbb{R}^{d})}\leq C_{\varpi},\quad \text{when } 0\leq \frac{\eta+\varrho}{d}\leq \frac{\varpi}{2}-1.
\end{align}
\begin{align}\label{holder estimate scailing}
\bigg\|\int_{\mathbb{R}^{d}}e^{i(t|\xi|^{\varpi}+x\xi)}|\xi|^{\eta}\,d\xi\bigg\|_{\dot{C}^{\varrho}}\lesssim t^{-\frac{d+\eta+\varpi}{\varpi}},\quad \text{when } 0\leq \frac{\eta+\varrho}{d}\leq \frac{\varpi}{2}-1.
\end{align}
Indeed, note that $\dot{C}^{\varrho}\sim \dot{B}^{\varrho}_{\infty,\infty}$ and $l^{1}\subset l^{\infty}$, hence
\begin{align*}
\big\|I^{\eta}S^{\varpi}(x)\big\|_{\dot{C}^{\varrho}}&\sim\sup_{j\in\mathbb{Z}}2^{j\varrho}
\big\|\triangle_{j}I^{\eta}S^{\varpi}\big\|_{L^{\infty}}\\
&\lesssim\sup_{x\in\mathbb{R}^{d}}\sum_{j\in\mathbb{Z}}
\left|\int_{\mathbb{R}^{d}}e^{i\left(|\xi|^{\varpi}+x\xi\right)}
|\xi|^{\eta+\varrho}\Phi\left(2^{-j}\xi\right)d\xi\right|,
\end{align*}
which implies \eqref{holder estimate} holds. By scaling, \eqref{holder estimate scailing} also holds.
\end{remark}
By using the dispersive estimate Lemma \ref{diepersive estimate}, we can get the following lemma.
\begin{lemma}\label{estimate operator}
Let $d\geq 1$, and the condition $0\leq\frac{\delta-\beta}{d}<\frac{\delta}{2}-1$ is satisfied, the following estimates hold:
\begin{align}
\label{dispersive estimates 1}\left\|e^{-itD^{\delta}}f\right\|_{L^{p}}&\lesssim t^{-\frac{\alpha d}{\beta}\left(1-\frac{2}{p}\right)}\left\|f\right\|_{L^{p'}},\text{ }2\leq p\leq\infty,\\\label{dispersive estimates 2}
\left\|D^{\delta-\beta}e^{-itD^{\delta}}f\right\|_{L^{p}}&\lesssim t^{\alpha-1-\frac{\alpha d}{\beta}\left(1-\frac{2}{p}\right)}\left\|f\right\|_{L^{p'}},\text{ }\frac{1}{p}=\frac{1}{2}-\frac{\delta-\beta}{d\left(\delta-2\right)},\\\label{dispersive estimates 3}
\left\|\int_{0}^{\infty}
\frac{e^{-rtD^{\delta}}r^{\alpha-1}}
{i^{\alpha}r^{2\alpha} - 2r^{\alpha}\cos(\alpha\pi) + (-i)^{\alpha}} \, drf(\cdot)\right\|_{L^{r}}&\lesssim t^{-\frac{\alpha d}{\beta}\left(\frac{1}{p}-\frac{1}{r}\right)}\left\|f\right\|_{L^{p}},\text{ }\frac{d}{\beta}\big(\frac{1}{p}-\frac{1}{r}\big)<1,\\\label{dispersive estimates 4}
\left\|\int_{0}^{\infty} \frac{D^{\delta-\beta} e^{-rtD^{\delta}} r^\alpha}{i^{\alpha} r^{2\alpha}
- 2r^\alpha \cos(\alpha \pi) + {(-i)}^{\alpha}} \, drf(\cdot)\right\|_{L^{r}}&\lesssim t^{\alpha-1-\frac{\alpha d}{\beta}\left(\frac{1}{p}-\frac{1}{r}\right)}\left\|f\right\|_{L^{p}},\text{ }\frac{d}{\beta}\big(\frac{1}{p}-\frac{1}{r}\big)<2.
\end{align}
\end{lemma}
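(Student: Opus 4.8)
The plan is to handle the four bounds in two groups. Estimates \eqref{dispersive estimates 1}--\eqref{dispersive estimates 2} concern the oscillatory Schr\"odinger-type propagators and I would prove them by interpolation anchored on Lemma \ref{diepersive estimate}; estimates \eqref{dispersive estimates 3}--\eqref{dispersive estimates 4} concern superpositions of fractional heat propagators and reduce to the known $L^{p}$--$L^{r}$ smoothing of the fractional heat semigroup from \cite{Miao}. For \eqref{dispersive estimates 1}: the hypothesis forces $\delta>2$, so Lemma \ref{diepersive estimate} with $\varpi=\delta$, $\eta=0$ and Remark \ref{scaling disoersive estimates} give $\big\|\mathcal{F}^{-1}(e^{-it|\xi|^{\delta}})\big\|_{L^{\infty}}\lesssim t^{-d/\delta}$, i.e. $e^{-itD^{\delta}}:L^{1}\to L^{\infty}$ with norm $\lesssim t^{-d/\delta}$; interpolating (Riesz--Thorin) with the Plancherel bound $\big\|e^{-itD^{\delta}}f\big\|_{L^{2}}=\|f\|_{L^{2}}$ yields $\big\|e^{-itD^{\delta}}f\big\|_{L^{p}}\lesssim t^{-\frac{d}{\delta}(1-\frac{2}{p})}\|f\|_{L^{p'}}$ for $2\le p\le\infty$, which is \eqref{dispersive estimates 1} once one recalls $\delta=\beta/\alpha$, hence $d/\delta=\alpha d/\beta$.

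For \eqref{dispersive estimates 2} the difficulty is the derivative loss: $D^{\delta-\beta}e^{-itD^{\delta}}$ is \emph{not} bounded on $L^{2}$, so the previous scheme has no endpoint at $p=2$. The plan is to use Stein's theorem of analytic interpolation with a derivative order that slides along the complex strip. Put $\theta_{\max}=\tfrac{d(\delta-2)}{2}$, the largest exponent admissible in Lemma \ref{diepersive estimate} for $\varpi=\delta$, and consider the analytic family $T_{z}=D^{z\theta_{\max}}e^{-itD^{\delta}}$, $0\le\Re z\le1$. On $\Re z=0$ the multiplier $|\xi|^{i\omega\theta_{\max}}e^{-it|\xi|^{\delta}}$ of $T_{i\omega}$ has modulus one, so $\big\|T_{i\omega}f\big\|_{L^{2}}=\|f\|_{L^{2}}$. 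On $\Re z=1$, $T_{1+i\omega}$ is convolution by $\mathcal{F}^{-1}\big(|\xi|^{\theta_{\max}}|\xi|^{i\omega\theta_{\max}}e^{-it|\xi|^{\delta}}\big)$, and I would rerun the proof of Lemma \ref{diepersive estimate} with the extra factor $|\xi|^{i\omega\theta_{\max}}$ --- which is smooth off the origin with $k$-th derivatives $\lesssim(1+|\omega|)^{k}|\xi|^{-k}$, hence costs only a polynomial power of $(1+|\omega|)$ --- to get $\big\|T_{1+i\omega}\big\|_{L^{1}\to L^{\infty}}\lesssim(1+|\omega|)^{N}t^{-(d+\theta_{\max})/\delta}$. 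Analytic interpolation then gives $T_{\theta}:L^{p'}\to L^{p}$ with $\theta=1-\tfrac{2}{p}$ and norm $\lesssim t^{-\theta(d+\theta_{\max})/\delta}$. Since the assumption $0\le\tfrac{\delta-\beta}{d}<\tfrac{\delta}{2}-1$ reads $0<\delta-\beta<\theta_{\max}$, the equation $(1-\tfrac2p)\theta_{\max}=\delta-\beta$ has a unique solution $p>2$, which is exactly $\tfrac1p=\tfrac12-\tfrac{\delta-\beta}{d(\delta-2)}$; for this $p$, $T_{\theta}=D^{\delta-\beta}e^{-itD^{\delta}}$, and using $d+\theta_{\max}=\tfrac{d\delta}{2}$ and $\tfrac{\delta-\beta}{\delta}=1-\alpha$ one checks $\theta\tfrac{d+\theta_{\max}}{\delta}=(1-\tfrac2p)\tfrac d2=(1-\alpha)+\tfrac{\alpha d}{\beta}(1-\tfrac2p)$, the exponent in \eqref{dispersive estimates 2}. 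I expect this to be the main obstacle: not only is there no $L^{2}$ anchor, but the pointwise decay of $I^{\delta-\beta}S^{\delta}$ from Lemma \ref{diepersive estimate} is too weak for Young's or the Hardy--Littlewood--Sobolev inequality on the kernel to reach the stated $L^{p'}\to L^{p}$ bound, so the sliding-order interpolation is essential, and its one genuinely technical point is checking that the oscillatory-integral estimate of Lemma \ref{diepersive estimate} survives insertion of $|\xi|^{i\omega\theta_{\max}}$ with only polynomial growth in $\omega$.

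For \eqref{dispersive estimates 3} and \eqref{dispersive estimates 4}, I would first record that $Q(r):=i^{\alpha}r^{2\alpha}-2r^{\alpha}\cos(\alpha\pi)+(-i)^{\alpha}$ never vanishes for $r>0$ when $0<\alpha<1$: viewed as a quadratic in $r^{\alpha}$ its roots are $e^{i\alpha\pi/2}$ and $e^{-3i\alpha\pi/2}$, neither of which is a positive real, while $|Q(r)|\to1$ as $r\to0$ and $|Q(r)|\sim r^{2\alpha}$ as $r\to\infty$, so $|Q(r)|^{-1}\lesssim\min(1,r^{-2\alpha})$. Then the operators in \eqref{dispersive estimates 3}, \eqref{dispersive estimates 4} are $\int_{0}^{\infty}\tfrac{r^{\alpha-1}}{Q(r)}e^{-rtD^{\delta}}f\,dr$ and $\int_{0}^{\infty}\tfrac{r^{\alpha}}{Q(r)}D^{\delta-\beta}e^{-rtD^{\delta}}f\,dr$, and by Minkowski's integral inequality and the fractional heat-kernel bounds of \cite{Miao}, $\big\|e^{-\tau D^{\delta}}f\big\|_{L^{r}}\lesssim\tau^{-\frac{d}{\delta}(\frac1p-\frac1r)}\|f\|_{L^{p}}$ and $\big\|D^{\delta-\beta}e^{-\tau D^{\delta}}f\big\|_{L^{r}}\lesssim\tau^{-(1-\alpha)-\frac{d}{\delta}(\frac1p-\frac1r)}\|f\|_{L^{p}}$ (valid for $\tfrac1p\ge\tfrac1r$), the matter reduces to the scalar integrals $\int_{0}^{\infty}\tfrac{r^{\alpha-1}}{|Q(r)|}(rt)^{-\frac{d}{\delta}(\frac1p-\frac1r)}dr$ and $\int_{0}^{\infty}\tfrac{r^{\alpha}}{|Q(r)|}(rt)^{-(1-\alpha)-\frac{d}{\delta}(\frac1p-\frac1r)}dr$. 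Pulling $t$ out by scaling and inserting $|Q(r)|^{-1}\lesssim\min(1,r^{-2\alpha})$ together with $d/\delta=\alpha d/\beta$, the binding constraint is at $r=0$ and gives convergence iff $\tfrac{d}{\beta}(\tfrac1p-\tfrac1r)<1$, respectively iff $\tfrac{d}{\beta}(\tfrac1p-\tfrac1r)<2$ (convergence at the upper endpoint being automatic), while the extracted power of $t$ is $t^{-\frac{\alpha d}{\beta}(\frac1p-\frac1r)}$, respectively $t^{\alpha-1-\frac{\alpha d}{\beta}(\frac1p-\frac1r)}$, which are exactly \eqref{dispersive estimates 3} and \eqref{dispersive estimates 4}.
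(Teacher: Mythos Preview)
Your treatment of \eqref{dispersive estimates 1}, \eqref{dispersive estimates 3}, and \eqref{dispersive estimates 4} is essentially the paper's argument; your explicit analysis of the non-vanishing and two-sided decay of $Q(r)=i^{\alpha}r^{2\alpha}-2r^{\alpha}\cos(\alpha\pi)+(-i)^{\alpha}$ is in fact more careful than the paper, which simply asserts that the relevant $r$-integrals converge under the stated conditions.

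For \eqref{dispersive estimates 2} you take a genuinely different route. You run Stein's analytic interpolation on the family $T_{z}=D^{z\theta_{\max}}e^{-itD^{\delta}}$, anchored at a unitary $L^{2}\to L^{2}$ bound on $\Re z=0$ and an $L^{1}\to L^{\infty}$ bound on $\Re z=1$ that requires revisiting the oscillatory-integral proof of Lemma~\ref{diepersive estimate} with an extra factor $|\xi|^{i\omega\theta_{\max}}$. The paper instead avoids that re-examination entirely: it records the two endpoint bounds $D^{\delta-\beta}e^{-itD^{\delta}}:\dot{H}^{-\sigma}_{1}\to L^{\infty}$ (with $\sigma=\theta_{\max}-(\delta-\beta)$, directly from Lemma~\ref{diepersive estimate} at $\eta=\theta_{\max}$) and $D^{\delta-\beta}e^{-itD^{\delta}}:\dot{H}^{\delta-\beta}_{2}\to L^{2}$ (Plancherel), applies \emph{real} interpolation $(\,\cdot\,,\,\cdot\,)_{\theta,p}$ so that the source space becomes $\dot{B}^{0}_{p',p}$ and the target $L^{p}$, and finishes with the Besov embedding $L^{p'}\approx\dot{F}^{0}_{p',2}\hookrightarrow\dot{B}^{0}_{p',p}$ (valid since $p'\le 2\le p$). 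Your approach stays entirely within Lebesgue spaces and is more direct on the output side, at the cost of re-opening the Van der Corput\,/\,integration-by-parts computation with a complex power inserted; the paper uses Lemma~\ref{diepersive estimate} only as a black box but pays with standard interpolation-space identities and a Besov embedding. Both are valid, and your exponent bookkeeping for the specific $p$ is correct.
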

\begin{proof}
From Lemma \ref{diepersive estimate} and Plancherel identity, we obtain
\begin{align*}
\left\|e^{-itD^{\delta}}f\right\|_{L^{\infty}}&\lesssim t^{-\frac{\alpha d}{\beta}}\left\|f\right\|_{L^{1}},\\
\left\|e^{-itD^{\delta}}f\right\|_{L^{2}}&\lesssim \left\|f\right\|_{L^{2}},
\end{align*}
by using the Riesz interpolation, we get \eqref{dispersive estimates 1}.

We choose $\sigma=\left(\frac{\delta}{2}-1\right)d-\left(\delta-\beta\right)>0$, and note that
$
\frac{\delta-\beta+\sigma+d}{\delta}=\frac{d}{2}
$.
By using Lemma \ref{diepersive estimate} and Plancherel identity,
we have that
\begin{align*}
\left\|D^{\delta-\beta}e^{-itD^{\delta}}f\right\|_{L^{\infty}}
=\left\|D^{\delta-\beta}D^{\sigma}e^{-itD^{\delta}}D^{-\sigma}f\right\|_{L^{\infty}}&\lesssim t^{-\frac{d}{2}}\left\|f\right\|_{\dot{H}^{-\sigma}_{1}},\\
\left\|D^{\delta-\beta}e^{-itD^{\delta}}f\right\|_{L^{2}}=\left\|e^{-itD^{\delta}}D^{\delta-\beta}f\right\|_{L^{2}}
&\lesssim\left\|f\right\|_{\dot{H}^{\delta-\beta}_{2}},
\end{align*}
for
\begin{align*}
\theta=\frac{2\left(\delta-\beta\right)}{d\left(\delta-2\right)},\text{ }
\frac{1}{p}=\frac{1}{2}-\frac{\delta-\beta}{d\left(\delta-2\right)},\text{ }
\frac{1}{p'}=\frac{1}{2}+\frac{\delta-\beta}{d\left(\delta-2\right)},
\end{align*}
and note that
\begin{align*}
\theta\left(-\sigma\right)+\left(1-\theta\right)\left(\delta-\beta\right)&=0,\\
-\frac{d}{2}\theta=\alpha-1-\frac{\alpha d}{\beta}\left(1-\frac{2}{p}\right)&=-\frac{d}{\delta}\big(\frac{1}{p'}-\frac{1}{p}+\frac{\delta-\beta}{d}\big),
\end{align*}
by using the real interpolation and Besov embedeeding \cite{Bergh,B.X. Wang}, we have
\begin{align*}
\left(L^{\infty}(\mathbb{R}^{d}),L^{2}(\mathbb{R}^{d})\right)_{\theta,p}=L^{p,p}(\mathbb{R}^{d})
&=L^{p}(\mathbb{R}^{d}),\\
\left(\dot{H}^{-\sigma}_{1}(\mathbb{R}^{d}),
\dot{H}^{\delta-\beta}_{2}(\mathbb{R}^{d})\right)_{\theta,p}&=\dot{B}^{0}_{p',p}(\mathbb{R}^{d}),\\
L^{p'}(\mathbb{R}^{d})\approx\dot{F}^{0}_{p',2}(\mathbb{R}^{d})\hookrightarrow \dot{B}^{0}_{p',2}(\mathbb{R}^{d})&\hookrightarrow\dot{B}^{0}_{p',p}(\mathbb{R}^{d}),
\end{align*}
hence we obtain that
\begin{align*}
\left\|D^{\delta-\beta}e^{-itD^{\delta}}f\right\|_{L^{p}}&\lesssim t^{\alpha-1-\frac{\alpha d}{\beta}\left(1-\frac{2}{p}\right)}\left\|f\right\|_{L^{p'}},
\end{align*}
hence we obtain \eqref{dispersive estimates 2}.

Moreover, $\frac{d}{\beta}\big(\frac{1}{p}-\frac{1}{r}\big)<1$ and $\frac{d}{\beta}\big(\frac{1}{p}-\frac{1}{r}\big)<2$ can ensure that integral
\begin{align*}
\int_{0}^{\infty}\frac{r^{\alpha-1-\frac{\alpha d}{\beta}\left(\frac{1}{p}-\frac{1}{r}\right)}}{i^{\alpha}r^{2\alpha} - 2r^{\alpha}\cos(\alpha\pi) + (-i)^{\alpha}}dr,\int_{0}^{\infty}\frac{r^{2\alpha-1-\frac{\alpha d}{\beta}\left(\frac{1}{p}-\frac{1}{r}\right)}}{i^{\alpha}r^{2\alpha} - 2r^{\alpha}\cos(\alpha\pi) + (-i)^{\alpha}}dr
\end{align*}
are respectively both convergent. Therefore, basic the estimates of fractional heat kernel, we obtain \eqref{dispersive estimates 3},\eqref{dispersive estimates 4}.
The proof is completed.
\end{proof}
\begin{remark}\label{condition claim}
From the above process, it is obvious that the condition $0\leq \frac{\delta-\beta}{d}<\frac{\delta}{2}-1$ is not necessary for \eqref{dispersive estimates 1}, \eqref{dispersive estimates 3} and \eqref{dispersive estimates 4}.
\end{remark}
\begin{remark}
Indeed, let $r\in(0,1)$. If $0\leq (\delta-\beta)/d\leq\delta/2-1/r$, then for the operator $D^{\delta-\beta}e^{-itD^{\delta}}$, we obtain the following dispersive estimates:
\begin{enumerate}[\rm(i)]
\item
\begin{align}\label{Hardy estimate1}
\big\|D^{\delta-\beta}e^{-itD^{\delta}}f\big\|_{L^{\infty}}&\lesssim t^{\alpha-1-\frac{\alpha d}{\beta r}}\big\|f\big\|_{\mathcal{H}^{r}},
\end{align}
\item
Consider the line segment $\overline{AB}$ determined by points $A:(\frac{1}{p'},\frac{1}{p})$ and $B:(\frac{1}{r},0)$, where $p$ takes values from \eqref{dispersive estimates 2}. Then we have
\begin{align}\label{Hardy estimate2}
\big\|D^{\delta-\beta}e^{-itD^{\delta}}f\big\|_{\mathcal{H}^{a}}&\lesssim t^{\alpha-1-\frac{\alpha d}{\beta}(\frac{1}{b}-\frac{1}{a})}\big\|f\big\|_{\mathcal{H}^{b}},\ \text{where } \left(\frac{1}{b},\frac{1}{a}\right)\in \overline{AB}.
\end{align}
\item
\begin{align}\label{Hardy estimate3}
\big\|D^{\delta-\beta}e^{-itD^{\delta}}f\big\|_{BMO}\lesssim t^{\alpha-1-\frac{\alpha d}{\beta c}}\big\|f\big\|_{\mathcal{H}^{c}},\ \text{where } c=\frac{p+r-pr}{p-pr+2r-1}.
\end{align}
\end{enumerate}

For $r\in(0,1)$, note that $\big(\mathcal{H}^{r}(\mathbb{R}^{d})\big)^{\star}=\dot{C}^{d(\frac{1}{r}-1)}(\mathbb{R}^{d})$, and
$$
0\leq \frac{\delta-\beta}{d}<\frac{\delta}{2}-\frac{1}{r}\Rightarrow \frac{\delta-\beta+d(\frac{1}{r}-1)}{d}\leq\frac{\delta}{2}-1.
$$
Applying Lemma \ref{diepersive estimate} and Remark \ref{operator holder estimate}, we have
\begin{align*}
\big\|D^{\delta-\beta}e^{-itD^{\delta}}f\big\|_{L^{\infty}}&=\big\|S_{t,\delta-\beta}\star f\big\|_{L^{\infty}}\\
&\lesssim\big\|\mathcal{F}^{-1}\big(|\xi|^{\delta-\beta}e^{-it|\xi|^{\delta}}\big)\big\|_{\dot{C}^{d(\frac{1}{r}-1)}}\big\|f\big\|_{\mathcal{H}^{r}}\\
&\lesssim t^{-\frac{d}{\delta}\left(\frac{1}{r}+\frac{\delta-\beta}{d}\right)}\big\|f\big\|_{\mathcal{H}^{r}}.
\end{align*}
Combining \eqref{dispersive estimates 2} and \eqref{Hardy estimate1}, interpolation yields \eqref{Hardy estimate2}. Furthermore, since $(1,\frac{1-r}{p+r-pr})\in \overline{AB}$, we obtain
$$
\big\|D^{\delta-\beta}e^{-itD^{\delta}}f\big\|_{\mathcal{H}^{\frac{p+r-pr}{1-r}}}\lesssim t^{-\frac{\alpha d}{\beta}\left(\frac{p-pr+2r-1}{p+r-pr}+\frac{\delta-\beta}{d}\right)}\big\|f\big\|_{\mathcal{H}^{1}}.
$$
Noting that $\big(\mathcal{H}^{1}(\mathbb{R}^{d})\big)^{\star}=BMO$, we derive \eqref{Hardy estimate3} through duality estimates.
\end{remark}
Basic the above Lemma \ref{estimate operator}, we can construct the following estimates for the solution operator $E_{\alpha,1}\left((-it)^{\alpha}D^{\beta}\right)$ and $E_{\alpha,\alpha}\left((-it)^{\alpha}D^{\beta}\right)$. Under the conditions of Lemma \ref{estimate operator}, for convenience, we denote
$p_{0}=\frac{2d(\delta-2)}{d(\delta-2)-2(\delta-\beta)}$.
\begin{lemma}\label{Mittag-Leffer operator estimate}
Let $d\geq 1$, $\beta>2$, under the conditions of Lemma {\rm\ref{estimate operator}},
the following estimate hold:
\begin{align}
\label{Mittag-Leffer estimates 1}
\left\|E_{\alpha,1}\left((-it)^{\alpha}D^{\beta}\right)f(\cdot)\right\|_{L^{p_{0}}}
\lesssim t^{-\frac{\alpha d}{\beta}\left(1-\frac{2}{p}\right)}\left\|f\right\|_{L^{p_{0}'}},\\\label{Mittag-Leffer estimates 2}
\left\|E_{\alpha,\alpha}\left((-it)^{\alpha}D^{\beta}\right)f(\cdot)\right\|_{L^{p_{0}}}
\lesssim t^{-\frac{\alpha d}{\beta}\left(1-\frac{2}{p}\right)}\left\|f\right\|_{L^{p_{0}'}}.
\end{align}
\end{lemma}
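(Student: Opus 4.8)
The plan is to deduce both estimates from the dispersive bounds collected in Lemma~\ref{estimate operator}, after first splitting each Mittag-Leffler propagator into a Schr\"{o}dinger-kernel part and a fractional heat-integral part via Lemma~\ref{find Mild solution} and the identities of Remark~\ref{change variable equaton of Mittag-Leffer function}. Concretely, modulo bounded ($\alpha$-dependent) constants one has, as Fourier multiplier operators,
\[
E_{\alpha,1}\big((-it)^{\alpha}D^{\beta}\big)f=e^{-itD^{\delta}}f+\int_{0}^{\infty}\frac{e^{-rtD^{\delta}}r^{\alpha-1}}{i^{\alpha}r^{2\alpha}-2r^{\alpha}\cos(\alpha\pi)+(-i)^{\alpha}}\,dr\,f,
\]
and, using the identity $\tfrac{\beta}{\alpha}(1-\alpha)=\delta-\beta$,
\[
E_{\alpha,\alpha}\big((-it)^{\alpha}D^{\beta}\big)f=t^{1-\alpha}\Big(D^{\delta-\beta}e^{-itD^{\delta}}f+\int_{0}^{\infty}\frac{D^{\delta-\beta}e^{-rtD^{\delta}}r^{\alpha}}{i^{\alpha}r^{2\alpha}-2r^{\alpha}\cos(\alpha\pi)+(-i)^{\alpha}}\,dr\,f\Big).
\]
The observation that makes everything fit is that $\tfrac{1}{p_{0}'}-\tfrac{1}{p_{0}}=1-\tfrac{2}{p_{0}}=\tfrac{2(\delta-\beta)}{d(\delta-2)}$, and that the definition of $p_{0}$ means $\tfrac{1}{p_{0}}=\tfrac12-\tfrac{\delta-\beta}{d(\delta-2)}$ is exactly the endpoint exponent appearing in \eqref{dispersive estimates 2}.

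For \eqref{Mittag-Leffer estimates 1}, I would estimate the Schr\"{o}dinger term $e^{-itD^{\delta}}f$ by \eqref{dispersive estimates 1} with $p=p_{0}$ (no admissibility restriction is needed), and the heat-integral term by \eqref{dispersive estimates 3} with the exponent pair $(p_{0}',p_{0})$; the latter is applicable because $\beta>2$ is equivalent to $2(\delta-\beta)<\beta(\delta-2)$, i.e. to $\tfrac{d}{\beta}\big(\tfrac{1}{p_{0}'}-\tfrac{1}{p_{0}}\big)<1$. Both contributions decay like $t^{-\frac{\alpha d}{\beta}(1-2/p_{0})}$ against $\|f\|_{L^{p_{0}'}}$, so the triangle inequality closes the estimate. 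For \eqref{Mittag-Leffer estimates 2}, I would apply \eqref{dispersive estimates 2} (whose stated exponent is precisely $p_{0}$) to $D^{\delta-\beta}e^{-itD^{\delta}}f$ and \eqref{dispersive estimates 4} with $(p_{0}',p_{0})$ to the heat-integral term --- the condition $\tfrac{d}{\beta}\big(\tfrac{1}{p_{0}'}-\tfrac{1}{p_{0}}\big)<2$ needed there follows a fortiori from the $<1$ bound above. In each case the dispersive bound produces the factor $t^{\alpha-1-\frac{\alpha d}{\beta}(1-2/p_{0})}$, which combines with the prefactor $t^{1-\alpha}$ to give exactly $t^{-\frac{\alpha d}{\beta}(1-2/p_{0})}$; summing the two terms finishes the proof.

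The argument involves no new oscillatory-integral analysis beyond Lemma~\ref{estimate operator}; the only delicate point is the bookkeeping --- verifying that $p_{0}$ coincides with the endpoint exponent of \eqref{dispersive estimates 2}, that all powers of $t$ collapse to the single exponent $-\frac{\alpha d}{\beta}(1-2/p_{0})$ once the $t^{1-\alpha}$ prefactor in the $E_{\alpha,\alpha}$ decomposition is taken into account, and that the hypothesis $\beta>2$ (together with $0\le\frac{\delta-\beta}{d}<\frac{\delta}{2}-1$) is exactly what guarantees convergence of the $r$-integral in the $E_{\alpha,1}$ term through \eqref{dispersive estimates 3}. I expect this last equivalence to be the step most worth spelling out in the full write-up.
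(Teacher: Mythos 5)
Your proposal is correct and takes essentially the same route as the paper: the identical decomposition of $E_{\alpha,1}$ and $E_{\alpha,\alpha}$ via Remark~\ref{change variable equaton of Mittag-Leffer function}, followed by term-by-term application of Lemma~\ref{estimate operator}, with the key observation that $\beta>2$ is precisely the convergence condition $\frac{d}{\beta}\big(\frac{1}{p_{0}'}-\frac{1}{p_{0}}\big)<1$ (which is the same inequality the paper records as $\frac{1}{p_{0}}>\frac{d-\beta}{2d}$). The paper declares the remaining bookkeeping ``obvious''; your write-up simply makes explicit which of \eqref{dispersive estimates 1}--\eqref{dispersive estimates 4} handles which term and how the $t^{1-\alpha}$ prefactor cancels.
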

\begin{proof}
From Remark \ref{change variable equaton of Mittag-Leffer function}, we obtain that
\begin{align*}
E_{\alpha,1}\left((-it)^{\alpha}D^{\beta}\right)&\sim e^{-it D^{\delta}}
    -\int_{0}^{\infty}
    \frac{e^{-rtD^{\delta}} r^{\alpha-1}}
    {i^{\alpha} r^{2\alpha} - 2 r^{\alpha} \cos(\alpha \pi) + (-i)^{\alpha}} \, dr,\\
E_{\alpha,\alpha}\left((-it)^{\alpha}D^{\beta}\right)&\sim t^{1-\alpha} D^{\delta-\beta} e^{-itD^{\delta}}
    - t^{1-\alpha} \int_{0}^{\infty}
    \frac{D^{\delta-\beta}e^{-rtD^{\delta}}r^{\alpha}}
    {i^{\alpha} r^{2\alpha} - 2 r^{\alpha} \cos(\alpha \pi) + (-i)^{\alpha}} \, dr.
\end{align*}
The condition $\beta>2$ can ensure that
$$
\frac{1}{p_{0}}=\frac{1}{2}-\frac{\delta-\beta}{d\left(\delta-2\right)}>\frac{d-\beta}{2d},
$$
by using Lemma \ref{estimate operator}, the estimates \eqref{Mittag-Leffer estimates 1} and \eqref{Mittag-Leffer estimates 2}
are obvious. The proof is now completed.
\end{proof}
\begin{remark}
From Lemma \ref{estimate operator}, it is obvious that \eqref{Mittag-Leffer estimates 1} hold for any $\frac{d-\beta}{2d}<\frac{1}{p}\leq\frac{1}{2}$.
\end{remark}

\section{Main results}
In this section, we establish our main results, which include global/local well-posedness, the asymptotic behavior of solutions, and self-similarity. For convenience, we choose the nonlinearity $g(w) = |w|^{p-2}w$.
\subsection{The well-posedness}

\subsubsection{The case $2<\beta<\infty$}
First, we construct the global/local well-posedness of E.q. \eqref{Eq:F-T-S-E} for case $2<\beta<\infty$.

For $2\leq p<\infty$, we consider the following Banach space
$$
X^{\kappa}_{p}=\left\{w:\left(0,\infty\right)\rightarrow L^{p}(\mathbb{R}^{d})/\quad \sup_{0<t<\infty}t^{\kappa}\left\|w(t)\right\|_{p}<\infty\right\}\text{ with }\kappa=\frac{\alpha}{\beta}\left(\frac{\beta}{p-2}-\frac{d}{p}\right),
$$
and define the initial value space $X_{0}$ as the set of $u_{0}\in\mathcal{S}'$ such that
$$
\left\|w_{0}\right\|_{X_{0}}=\sup_{0<t<\infty}t^{\kappa}\left\|E_{\alpha,1}((-it)^{\alpha}
D^{\beta})w_{0}\right\|_{L^{p}}<\infty,
$$
then we can construct the following result.
\begin{theorem}\label{global existence}
Let $\beta>2$, $d\geq 1$ and for $\frac{\delta-\beta}{d}<\frac{\delta}{2}-1$, assume that $g(w)=|w|^{p_{0}-2}w$ and the condition $\kappa\left(p_{0}-1\right)<1$ is satisfied. For the constant $\varepsilon>0$ is small enough, the E.q. \eqref{Eq:F-T-S-E} has a unique global mild solution in $X_{p_{0}}^{\kappa}$ when the initial $\left\|w_{0}\right\|_{X_{0}}\lesssim \varepsilon$. If $u,w$ are respectively the two mild solutions of E.q. \eqref{Eq:F-T-S-E} with the initial data $w_{0}$ and $u_{0}$, we have that $\left\|w-u\right\|_{X_{p_{0}}^{\kappa}}\lesssim \left\|w_{0}-u_{0}\right\|_{X_{0}}$.
\end{theorem}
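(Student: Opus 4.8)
The plan is to obtain the mild solution \eqref{mild solution} as a fixed point of the map
\[
\Phi(w)(t,x)=E_{\alpha,1}((-it)^{\alpha}D^{\beta})w_{0}(x)+i^{-\alpha}\int_{0}^{t}(t-s)^{\alpha-1}E_{\alpha,\alpha}((-i(t-s))^{\alpha}D^{\beta})g(w(s,x))\,ds
\]
on a small closed ball of $X_{p_{0}}^{\kappa}$, and then to apply the Banach contraction principle. The linear part needs no work: by the very definition of $X_{0}$ we have $\sup_{t>0}t^{\kappa}\|E_{\alpha,1}((-it)^{\alpha}D^{\beta})w_{0}\|_{L^{p_{0}}}=\|w_{0}\|_{X_{0}}\le C\varepsilon$, so the whole content lies in controlling the Duhamel integral, which is exactly where Lemma \ref{Mittag-Leffer operator estimate} and the specific exponent $p=p_{0}$ are needed.

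First I would record the elementary pointwise inequalities $\big||a|^{p_{0}-2}a\big|=|a|^{p_{0}-1}$ and $\big||a|^{p_{0}-2}a-|b|^{p_{0}-2}b\big|\lesssim\big(|a|^{p_{0}-2}+|b|^{p_{0}-2}\big)|a-b|$, valid since $p_{0}>2$ (which follows from $\delta>\beta$ together with $\tfrac{\delta-\beta}{d}<\tfrac{\delta}{2}-1$). Combined with H\"older's inequality and the algebraic identity $(p_{0}-1)p_{0}'=p_{0}$, these give for each fixed $s$
\[
\|g(w(s))\|_{L^{p_{0}'}}\lesssim\|w(s)\|_{L^{p_{0}}}^{p_{0}-1},\qquad
\|g(w(s))-g(u(s))\|_{L^{p_{0}'}}\lesssim\big(\|w(s)\|_{L^{p_{0}}}^{p_{0}-2}+\|u(s)\|_{L^{p_{0}}}^{p_{0}-2}\big)\|w(s)-u(s)\|_{L^{p_{0}}}.
\]
Inserting the first bound together with \eqref{Mittag-Leffer estimates 2} (taking $p=p_{0}$) into the Duhamel term, and using $\|w(s)\|_{L^{p_{0}}}\le s^{-\kappa}\|w\|_{X_{p_{0}}^{\kappa}}$, I get
\[
t^{\kappa}\big\|\Phi(w)(t)-E_{\alpha,1}((-it)^{\alpha}D^{\beta})w_{0}\big\|_{L^{p_{0}}}\lesssim\|w\|_{X_{p_{0}}^{\kappa}}^{p_{0}-1}\,t^{\kappa}\int_{0}^{t}(t-s)^{\alpha-1-\frac{\alpha d}{\beta}(1-\frac{2}{p_{0}})}s^{-\kappa(p_{0}-1)}\,ds.
\]
The substitution $s=t\tau$ turns the integral into $t^{\,\kappa+\alpha-\frac{\alpha d}{\beta}(1-\frac{2}{p_{0}})-\kappa(p_{0}-1)}\,B\!\big(\kappa(p_{0}-2),\,1-\kappa(p_{0}-1)\big)$, and a direct computation with $\kappa=\tfrac{\alpha}{\beta}\big(\tfrac{\beta}{p_{0}-2}-\tfrac{d}{p_{0}}\big)$ shows the exponent of $t$ vanishes identically (this is precisely the scaling invariance of $X_{p_{0}}^{\kappa}$), while the Beta integral is finite because $\kappa(p_{0}-2)=\kappa(p_{0}-1)-\kappa>0$ (as $\kappa>0$, $p_{0}>2$) and $\kappa(p_{0}-1)<1$ by hypothesis. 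The identical computation applied to the Lipschitz bound above gives
\[
\big\|\Phi(w)-\Phi(u)\big\|_{X_{p_{0}}^{\kappa}}\lesssim\big(\|w\|_{X_{p_{0}}^{\kappa}}^{p_{0}-2}+\|u\|_{X_{p_{0}}^{\kappa}}^{p_{0}-2}\big)\|w-u\|_{X_{p_{0}}^{\kappa}}.
\]

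With these two estimates the rest is routine: I would set $B_{R}=\{w\in X_{p_{0}}^{\kappa}:\|w\|_{X_{p_{0}}^{\kappa}}\le R\}$, so that on $B_{R}$ one has $\|\Phi(w)\|_{X_{p_{0}}^{\kappa}}\le C\varepsilon+CR^{p_{0}-1}$ and $\|\Phi(w)-\Phi(u)\|_{X_{p_{0}}^{\kappa}}\le 2CR^{p_{0}-2}\|w-u\|_{X_{p_{0}}^{\kappa}}$; choosing $R=2C\varepsilon$ and $\varepsilon$ small enough that $CR^{p_{0}-1}\le\tfrac12 R$ and $2CR^{p_{0}-2}<1$ makes $\Phi$ a contraction of $B_{R}$ into itself, whose unique fixed point is the asserted global mild solution (measurability in $t$ is standard, and by construction it satisfies \eqref{mild solution}, i.e. it is mild in the sense of Definition \ref{mild definition}). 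Finally, for two solutions $w,u$ with data $w_{0},u_{0}$, I would subtract the two copies of \eqref{mild solution}, apply the $X_{0}$-bound to $E_{\alpha,1}((-it)^{\alpha}D^{\beta})(w_{0}-u_{0})$ and the Lipschitz Duhamel bound to $g(w)-g(u)$, and absorb the small factor $2CR^{p_{0}-2}<1$ into the left-hand side to conclude $\|w-u\|_{X_{p_{0}}^{\kappa}}\lesssim\|w_{0}-u_{0}\|_{X_{0}}$.

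The only genuinely delicate point has already been dispatched upstream: the operators $E_{\alpha,\alpha}((-it)^{\alpha}D^{\beta})$ neither form a semigroup nor lie in any H\"ormander multiplier class, carrying the derivative loss $D^{\delta-\beta}$, so the usual $L^{p'}\!\to\!L^{p}$ heat/Schr\"odinger machinery is unavailable and must be replaced by the dispersive bounds of Lemma \ref{estimate operator} and Lemma \ref{Mittag-Leffer operator estimate}. Once those are granted, the argument above is the classical Kato--Weissler scaling-critical fixed-point scheme, and the main thing to watch is simply the bookkeeping of time exponents, which the choice $p=p_{0}$ and the condition $\kappa(p_{0}-1)<1$ are tailored to make close.
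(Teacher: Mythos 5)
Your proposal is correct and follows essentially the same route as the paper: a Banach contraction argument on a small ball of $X_{p_{0}}^{\kappa}$, with the Duhamel term controlled by Lemma \ref{Mittag-Leffer operator estimate}, the substitution $s=t\tau$ producing the Beta function $B\big(\kappa(p_{0}-2),1-\kappa(p_{0}-1)\big)$ (the paper's $B_{\kappa}$, since $\alpha-\tfrac{\alpha d}{\beta}(1-\tfrac{2}{p_{0}})=\kappa(p_{0}-2)$), and the hypothesis $\kappa(p_{0}-1)<1$ ensuring its finiteness. The continuous-dependence step by absorbing the small Lipschitz factor is also exactly the paper's argument.
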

\begin{remark}
The assumption condition in Theorem \ref{global existence}, $\kappa(p_{0}-1) < 1$, is reasonable. For example, let $p^{*}$ be the positive real root of the equation
\[
dx^2 - (3d + \beta)x + 2d = 0.
\]
For $d = 3$, $\beta \in \left(2, \frac{9}{2}\right)$, and $\alpha \in \left(0, \frac{3}{8}\right)$, by calculation, we have
\[
\frac{d-\beta}{2d}<\frac{1}{p_{0}}<\frac{1}{p^{*}},
\]
which ensures that the condition $\kappa(p_{0}-1) < 1$ is satisfied.
\end{remark}
\begin{proof}
Consider the Banach space
\[
\Xi_{\varepsilon} = \left\{ w \in X_{p_{0}}^{\kappa} : \quad \left\| w \right\|_{X_{p_{0}}^{\kappa}} \lesssim \varepsilon \right\},
\]
equipped with the norm metric
\[
\left\| w - v \right\|_{X_{p_{0}}^{\kappa}} = \sup_{0 < t < \infty} t^{\kappa} \left\| w(t) - v(t) \right\|_{p_{0}}.
\]
Define the operator
$$
\Psi(w)=E_{\alpha,1}((-it)^{\alpha}D^{\beta})w_{0}(x) + i^{-\alpha}
\int_{0}^{t}(t-s)^{\alpha-1}E_{\alpha,\alpha}((-i(t-s))^{\alpha}D^{\beta})g(w(s,x)) \, ds,
$$
where $g\left(w\right)=\left|w\right|^{p_{0}-2}w$.
From the Lemma \ref{Mittag-Leffer operator estimate}, we obtain that
\begin{align*}
\left\|\Psi(w)\right\|_{p_{0}}&\lesssim \left\|E_{\alpha,1}((-it)^{\alpha}D^{\beta})w_{0}\right\|_{p_{0}}
+\int_{0}^{t}(t-s)^{\alpha-1}\left\|E_{\alpha,\alpha}((-i(t-s))^{\alpha}D^{\beta})g(w(s,\cdot))\right\|_{p_{0}}ds\\
&\lesssim \left\|E_{\alpha,1}((-it)^{\alpha}D^{\beta})w_{0}\right\|_{p_{0}}+\int_{0}^{t}\left(t-s\right)^{\alpha-1-\frac{\alpha d}{\beta}\left(\frac{1}{p_{0}'}-\frac{1}{p_{0}}\right)}\left\|g(w(s,\cdot))\right\|_{p_{0}'}ds\\
&\lesssim \left\|E_{\alpha,1}((-it)^{\alpha}D^{\beta})w_{0}\right\|_{p_{0}}+\int_{0}^{t}\left(t-s\right)^{\alpha-1-\frac{\alpha d}{\beta}\left(\frac{1}{p_{0}'}-\frac{1}{p_{0}}\right)}\left\|w\left(s,\cdot\right)\right\|^{p_{0}-1}_{p_{0}}ds\\
&\lesssim \left\|E_{\alpha,1}((-it)^{\alpha}D^{\beta})w_{0}\right\|_{p_{0}}+\int_{0}^{t}\left(t-s\right)^{\alpha-1-\frac{\alpha d}{\beta}\left(\frac{1}{p_{0}'}-\frac{1}{p_{0}}\right)}s^{-\kappa\left(p_{0}-1\right)}ds\left\|w\right\|_{X_{p_{0}}^{\kappa}}^{p_{0}-1}\\
&\lesssim \left\|E_{\alpha,1}((-it)^{\alpha}D^{\beta})w_{0}\right\|_{p_{0}}+t^{-\kappa}B_{\kappa}\left\|w\right\|_{X_{p_{0}}^{\kappa}}^{p_{0}-1},
\end{align*}
where the constant
$$
B_{\kappa}=B\left(\alpha-\frac{\alpha d}{\beta}(\frac{1}{p_{0}'}-\frac{1}{p_{0}}),1-\kappa(p_{0}-1)\right).
$$
Therefore, we obtain that $\left\|\Psi(w)\right\|_{X_{p_{0}}^{\kappa}}\lesssim \varepsilon$ when the initial $\left\|w_{0}\right\|_{X_{0}}\lesssim \varepsilon$ is small enough, and the operator $\Psi$ maps $\Xi_{\varepsilon}$ to $\Xi_{\varepsilon}$.

Moreover, for any $w,v\in\Xi_{\varepsilon}$ and small $\varepsilon>0$, we obtain that
\begin{align*}
&t^{\kappa}\left\|\Psi(w)-\Psi(v)\right\|_{p_{0}}\\
&\lesssim t^{\kappa}\int_{0}^{t}\left(t-s\right)^{\alpha-1-\frac{\alpha d}{\beta}(\frac{1}{p_{0}'}-\frac{1}{p_{0}})}\left\|g(w(s))-g(v(s))\right\|_{p_{0}'}ds\\
&\lesssim t^{\kappa}\int_{0}^{t}\left(t-s\right)^{\alpha-1-\frac{\alpha d}{\beta}(\frac{1}{p_{0}'}-\frac{1}{p_{0}})}\left(\left\|w(s)\right\|^{p_{0}-2}_{p_{0}}+\left\|v(s)\right\|^{p_{0}-2}_{p_{0}}\right)
\left\|w(s)-v(s)\right\|_{p}ds\\
&\lesssim t^{\kappa}\int_{0}^{t}\left(t-s\right)^{\alpha-1-\frac{\alpha d}{\beta}(\frac{1}{p_{0}'}-\frac{1}{p_{0}})}s^{-\kappa(p_{0}-1)}
\left((s^{\kappa}\left\|w(s)\right\|_{p_{0}})^{p_{0}-2}+(s^{\kappa}\left\|v(s)\right\|_{p_{0}})^{p_{0}-2}\right)
(s^{\kappa}\left\|w(s)-v(s)\right\|_{p_{0}})ds\\
&\lesssim 2^{p_{0}-2}B_{\kappa}\varepsilon^{p_{0}-2}\left\|w-v\right\|_{X_{p_{0}}^{\kappa}}<\frac{1}{2}\left\|w-v\right\|_{X_{p_{0}}^{\kappa}}.
\end{align*}
We get that
$$
\left\|\Psi(w)-\Psi(v)\right\|_{X_{p_{0}}^{\kappa}}<\frac{1}{2}\left\|w-v\right\|_{X_{p_{0}}^{\kappa}}.
$$
The operator $\Psi$ exist the unique fixed point on $\Xi_{\kappa}$ by Banach contraction mapping theorem, which is the mild solution of E.q. \eqref{Eq:F-T-S-E}.

Therefore, for two solutions $w,v$ with the initial data $w_{0}$ and $v_{0}$, respectively, similar the above process, we obtain that
\begin{align*}
\left\|w-v\right\|_{X_{p_{0}}^{\kappa}}&\lesssim \left\|w_{0}-v_{0}\right\|_{X_{0}}+2^{p_{0}-2}B_{\kappa}\varepsilon^{p_{0}-2}\left\|w-v\right\|_{X_{p_{0}}^{\kappa}}\\
&\lesssim\left\|w_{0}-v_{0}\right\|_{X_{0}}+\frac{1}{2}\left\|w-v\right\|_{X_{p_{0}}^{\kappa}},
\end{align*}
this implies that $\left\|w-v\right\|_{X_{0}^{\kappa}}\lesssim \left\|w_{0}-v_{0}\right\|_{X_{0}}$.
The proof is now completed.
\end{proof}
Next we construct the local existence of E.q. \eqref{Eq:F-T-S-E} for case $2<\beta<\infty$.

For $2\leq p<\infty$, $T>0$, we consider the following Banach space
$$
Y^{\nu}_{p}=\left\{w:\left(0,T\right)\rightarrow L^{p}(\mathbb{R}^{d})/\quad \sup_{0<t<T}t^{\nu}\left\|w(t)\right\|_{p}<\infty\right\}\text{ with }\nu=\frac{\alpha d}{\beta}\left(\frac{1}{p'}-\frac{1}{p}\right),
$$
then we have the following result.
\begin{theorem}\label{local existence}
Let $\beta>2$, $d\geq 1$ and for $\frac{\delta-\beta}{d}<\frac{\delta}{2}-1$, assume that $g(w)=|w|^{p_{0}-2}w$ and the condition $\nu\left(p_{0}-1\right)<\alpha$ is satisfied. For any $w_{0}\in L^{p_{0}'}(\mathbb{R}^{d})$, there exist $T>0$ such that E.q. \eqref{Eq:F-T-S-E} has a unique local mild solution in $Y_{p_{0}}^{\nu}$. If $w,v$ are respectively the two local mild solutions of E.q. \eqref{Eq:F-T-S-E} with the initial data $w_{0}$ and $v_{0}$, we have that $\left\|w-v\right\|_{Y_{p_{0}}^{\nu}}\lesssim \left\|w_{0}-v_{0}\right\|_{p_{0}'}$.
\end{theorem}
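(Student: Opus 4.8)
The plan is to run a Banach fixed point argument on a finite interval $(0,T)$, i.e.\ the finite-time analogue of Theorem \ref{global existence}. For $w_{0}\in L^{p_{0}'}(\mathbb{R}^{d})$ I would consider, for a ball $\mathcal{B}_{R}=\{w\in Y^{\nu}_{p_{0}}:\|w\|_{Y^{\nu}_{p_{0}}}\le R\}$, the operator
\[
\Psi(w)(t)=E_{\alpha,1}((-it)^{\alpha}D^{\beta})w_{0}+i^{-\alpha}\int_{0}^{t}(t-s)^{\alpha-1}E_{\alpha,\alpha}((-i(t-s))^{\alpha}D^{\beta})\,|w(s)|^{p_{0}-2}w(s)\,ds,
\]
whose fixed points are exactly the mild solutions of \eqref{Eq:F-T-S-E} on $(0,T)$ by Lemma \ref{find Mild solution}. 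The radius $R$ and the time $T$ will be chosen in terms of $\|w_{0}\|_{p_{0}'}$ alone. Since $\tfrac1{p_{0}}+\tfrac1{p_{0}'}=1$ we have $\nu=\frac{\alpha d}{\beta}\big(\tfrac1{p_{0}'}-\tfrac1{p_{0}}\big)=\frac{\alpha d}{\beta}\big(1-\tfrac2{p_{0}}\big)$, so by \eqref{Mittag-Leffer estimates 1} the linear part is controlled with no loss in $T$:
\[
\sup_{0<t<T}t^{\nu}\big\|E_{\alpha,1}((-it)^{\alpha}D^{\beta})w_{0}\big\|_{p_{0}}\lesssim \sup_{0<t<T}t^{\nu}\,t^{-\frac{\alpha d}{\beta}(1-\frac2{p_{0}})}\|w_{0}\|_{p_{0}'}\lesssim \|w_{0}\|_{p_{0}'}.
\]

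For the Duhamel term I would write $\big\||w(s)|^{p_{0}-2}w(s)\big\|_{p_{0}'}=\|w(s)\|_{p_{0}}^{p_{0}-1}\le s^{-\nu(p_{0}-1)}\|w\|_{Y^{\nu}_{p_{0}}}^{p_{0}-1}$, apply \eqref{Mittag-Leffer estimates 2} under the integral, and reduce to the Beta-type integral
\[
\int_{0}^{t}(t-s)^{\alpha-1-\nu}\,s^{-\nu(p_{0}-1)}\,ds=t^{\alpha-\nu p_{0}}\,B\big(\alpha-\nu,\,1-\nu(p_{0}-1)\big).
\]
The hypothesis $\nu(p_{0}-1)<\alpha$, together with $\alpha<1$ and $p_{0}\ge 2$, forces $\alpha-\nu>0$ and $1-\nu(p_{0}-1)>0$, so this Beta function is finite; restoring the weight $t^{\nu}$ leaves a net factor $t^{\alpha-\nu(p_{0}-1)}$ with positive exponent. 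Hence
\[
\|\Psi(w)\|_{Y^{\nu}_{p_{0}}}\le C_{0}\|w_{0}\|_{p_{0}'}+C_{1}\,T^{\alpha-\nu(p_{0}-1)}\|w\|_{Y^{\nu}_{p_{0}}}^{p_{0}-1}.
\]
Taking $R=2C_{0}\|w_{0}\|_{p_{0}'}$ and then $T$ small enough that $C_{1}T^{\alpha-\nu(p_{0}-1)}R^{p_{0}-2}\le\tfrac12$ makes $\Psi$ a self-map of $\mathcal{B}_{R}$. For the contraction I would use $\big||w|^{p_{0}-2}w-|v|^{p_{0}-2}v\big|\lesssim(|w|^{p_{0}-2}+|v|^{p_{0}-2})|w-v|$, Hölder in $x$, and the same Beta estimate, obtaining
\[
\|\Psi(w)-\Psi(v)\|_{Y^{\nu}_{p_{0}}}\lesssim T^{\alpha-\nu(p_{0}-1)}\big(\|w\|_{Y^{\nu}_{p_{0}}}^{p_{0}-2}+\|v\|_{Y^{\nu}_{p_{0}}}^{p_{0}-2}\big)\|w-v\|_{Y^{\nu}_{p_{0}}};
\]
shrinking $T$ further (still only in terms of $\|w_{0}\|_{p_{0}'}$) renders the constant $<\tfrac12$, so the contraction mapping theorem gives a unique local mild solution in $Y^{\nu}_{p_{0}}$. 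For the comparison estimate, given $w_{0},v_{0}$ I would fix a common $T$ adapted to $\max(\|w_{0}\|_{p_{0}'},\|v_{0}\|_{p_{0}'})$, apply the same bound to the two mild solutions, and get $\|w-v\|_{Y^{\nu}_{p_{0}}}\le C_{0}\|w_{0}-v_{0}\|_{p_{0}'}+\tfrac12\|w-v\|_{Y^{\nu}_{p_{0}}}$, whence $\|w-v\|_{Y^{\nu}_{p_{0}}}\lesssim\|w_{0}-v_{0}\|_{p_{0}'}$.

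The analytic content is entirely in the dispersive estimates of Lemma \ref{Mittag-Leffer operator estimate}, which depend on $\beta>2$ and $\frac{\delta-\beta}{d}<\frac{\delta}{2}-1$; the rest is bookkeeping of exponents. The points that need care are: recognizing that $\nu$ is chosen precisely so the linear term is bounded uniformly in $T$; checking that the Duhamel integral converges and produces a positive power of $T$, which is exactly what $\nu(p_{0}-1)<\alpha$ encodes (and from which $\nu(p_{0}-1)<1$ follows since $\alpha<1$); and, in the dependence-on-data statement, placing the two solutions on a common existence interval by letting $T$ depend on the larger data norm.
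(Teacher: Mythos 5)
Your proposal is correct and follows essentially the same route as the paper: a Banach fixed point argument in $Y^{\nu}_{p_{0}}$ on a ball of radius determined by $\|w_{0}\|_{p_{0}'}$, with the linear part controlled uniformly in $T$ via \eqref{Mittag-Leffer estimates 1} (the weight $\nu$ exactly cancelling the dispersive decay) and the Duhamel term reduced to the Beta integral producing the factor $T^{\alpha-\nu(p_{0}-1)}$. Your bookkeeping of the Beta function as $B(\alpha-\nu,\,1-\nu(p_{0}-1))$ is in fact the correct evaluation (the paper's second argument appears to be a slip), and your explicit handling of the continuous-dependence estimate on a common interval fills in a step the paper leaves implicit.
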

\begin{remark}
The assumption condition in Theorem \ref{local existence}, $\nu(p_{0}-1) < \alpha$, is reasonable. For example, let $p^{*}$ be the positive real root of the equation
\[
dx^2 - (3d + \beta)x + 2d = 0.
\]
For $d = 3$, $\beta>\frac{9}{2}$, and $\alpha\in\left(\frac{3}{8},1\right)$, by calculation, we have
\[
\frac{d-\beta}{2d}<\frac{1}{p^{*}}<\frac{1}{p_{0}},
\]
which ensures that the condition $\nu(p_{0}-1) < \alpha$ is satisfied.
\end{remark}
\begin{proof}
For $T>0$, we consider the operator $\Psi$, that is
$$
\Psi(w)=E_{\alpha,1}((-it)^{\alpha}D^{\beta})w_{0}(x) + i^{-\alpha}
\int_{0}^{t}(t-s)^{\alpha-1}E_{\alpha,\alpha}((-i(t-s))^{\alpha}D^{\beta})g(w(s,x)) \, ds, \text{ }t\in[0,T].
$$
By using Lemma \ref{Mittag-Leffer operator estimate}, we obtain that
\begin{align*}
\left\|E_{\alpha,1}((-it)^{\alpha}D^{\beta})w_{0}(\cdot)\right\|_{Y_{p_{0}}^{\nu}}=\sup_{t\in(0,T)}t^{\nu}
\left\|E_{\alpha,1}((-it)^{\alpha}D^{\beta})w_{0}(\cdot)\right\|_{L^{p_{0}}}\lesssim \left\|w_{0}\right\|_{L^{p_{0}'}},
\end{align*}
and
\begin{align*}
&\left\|\int_{0}^{t}(t-s)^{\alpha-1}E_{\alpha,\alpha}((-i(t-s))^{\alpha}D^{\beta})g(w(s,x)) \, ds\right\|_{Y_{p_{0}}^{\nu}}\\
&\lesssim \sup_{t\in(0,T)}t^{\nu}\int_{0}^{t}(t-s)^{\alpha-1-\frac{\alpha d}{\beta}(1-\frac{2}{p_{0}})}\left\|w(s)\right\|_{L^{p_{0}}}^{p_{0}-1}ds\\
&\leq CT^{\alpha-\nu(p_{0}-1)}B(\alpha-\nu,\nu(p_{0}-1))\left\|w\right\|_{Y_{p_{0}}^{\nu}}^{p_{0}-1}.
\end{align*}
Since $w_{0}\in L^{p_{0}'}(\mathbb{R}^{d})$, we can choose constant $M>0$ such that $\left\|w_{0}\right\|_{p_{0}'}\leq \frac{M}{2C}$. Let us consider the closed ball $B_{M}$ in $Y_{p_{0}}^{\nu}$, that is
$$
B_{M}=\left\{w\in Y_{p_{0}}^{\nu}:\quad \left\|w\right\|_{Y_{p_{0}}^{\nu}}\leq M\right\}.
$$
we choose the constant $T>0$ such that
$$
2CT^{\alpha-\nu(p_{0}-1)}B(\alpha-\nu,\nu(p_{0}-1))M^{p_{0}-2}<1,
$$
hence we can get that for $w\in B_{M}$,
\begin{align*}
\left\|\Psi(w)\right\|_{Y_{p_{0}}^{\nu}}\leq C\frac{M}{2C}+CT^{\alpha-\nu(p_{0}-1)}B(\alpha-\nu,\nu(p_{0}-1))M^{p_{0}-1}\leq \frac{M}{2}+\frac{M}{2}=M,
\end{align*}
and for any $w,v\in Y_{p_{0}}^{\nu}$,
\begin{align*}
\left\|\Psi(w)-\Psi(v)\right\|_{Y_{p_{0}}^{\nu}}&\leq 2CT^{\alpha-\nu(p_{0}-1)}B(\alpha-\nu,\nu(p_{0}-1))M^{p-2}\left\|w-v\right\|_{Y_{p_{0}}^{\nu}}\\
&<\left\|w-v\right\|_{Y_{p_{0}}^{\nu}},
\end{align*}
The Theorem \ref{local existence} is now completed by the Banach fixed point theorem.
\end{proof}
\subsubsection{The case $\beta<2$}
When $\beta <2$, Lemma \ref{Mittag-Leffer operator estimate} is no longer applicable, and we cannot establish the well-posedness using the methods in Theorem \ref{global existence} and Theorem \ref{local existence}. Inspired by \cite{Kenig,Grande}, we establish the following local well-posedness result.

\begin{theorem}\label{local-well-posedness}
Let $d=1$, $\beta > \frac{\delta+1}{2}$, $1<\delta < \frac{3}{2}$, $\varsigma=\frac{1}{2} - \frac{1}{2(p-2)}$, and $s\geq\varsigma$, where $p \geq 4$ is an even integer. If the initial value $w_{0} \in H^{s}(\mathbb{R})$, then there exists $T = T(\left\|w_{0}\right\|_{H^{s}(\mathbb{R})})$ such that the E.q.\eqref{Eq:F-T-S-E} has a unique mild solution $w \in C(0, T; H^{s}(\mathbb{R}))$ satisfying
\begin{align}
\label{regularity1} \left\|\langle D \rangle^{\sigma} w\right\|_{L^{\infty}_{x}L^{2}_{T}} &< \infty,\\
\label{regularity2} \left\|\langle D \rangle^{s} w\right\|_{L^{\infty}_{T}L^{2}_{x}} &< \infty, \\
\label{regularity3} \left\|w\right\|_{L^{2(p-2)}_{x}L^{\infty}_{T}} &< \infty, \\
\label{regularity4} \left\|\langle D \rangle^{\theta(s + \delta - \beta)} w\right\|_{L^{\frac{2(p-2)}{1-\theta}}_{x}L^{\frac{2}{\theta}}_{T}} &< \infty,\\
\label{regularity5} \left\|\langle D \rangle^{(1-\theta)(s + \delta - \beta)} w\right\|_{L^{\frac{2(p-2)}{\theta}}_{x}L^{\frac{2}{1-\theta}}_{T}} &< \infty,
\end{align}
where $\sigma=s + (3\delta - 1)/4- \beta/2$, $\theta\in\big(\gamma/(\varsigma+\gamma),\varsigma/(\varsigma+\gamma)\big)$.
\end{theorem}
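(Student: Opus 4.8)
The plan is a contraction-mapping argument for the Duhamel map attached to the mild representation \eqref{mild solution}, carried out in the space $Z_T$ of functions $w$ for which the five quantities appearing in \eqref{regularity1}--\eqref{regularity5} are all finite, normed by their sum together with $\|w\|_{C(0,T;H^s)}$; closed balls of $Z_T$ are complete metric spaces under the induced metric. I would set $\Psi(w)$ to be the right-hand side of \eqref{mild solution} with $g(w)=|w|^{p-2}w$. By Lemma \ref{find Mild solution}, up to harmless constants the data term $E_{\alpha,1}((-it)^\alpha D^\beta)w_0$ splits into $e^{-itD^\delta}w_0$ plus the fractional-heat integral $\int_0^\infty\frac{e^{-rtD^\delta}r^{\alpha-1}}{i^\alpha r^{2\alpha}-2r^\alpha\cos(\alpha\pi)+(-i)^\alpha}\,dr\,w_0$, while the Duhamel term splits into pieces built from $D^{\delta-\beta}e^{-i(t-s)D^\delta}$ and from $\int_0^\infty\frac{D^{\delta-\beta}e^{-r(t-s)D^\delta}r^\alpha}{i^\alpha r^{2\alpha}-2r^\alpha\cos(\alpha\pi)+(-i)^\alpha}\,dr$; hence every norm of $\Psi(w)$ is controlled by the operators estimated in Section 3.

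\textbf{Linear estimates.} For the $\langle D\rangle^{\sigma}L^\infty_xL^2_T$ norm I would combine the smoothing effects \eqref{smoothing 1}--\eqref{smoothing 2}, \eqref{smoothing 7}, \eqref{smoothing 9}, \eqref{smoothing 10}--\eqref{smoothing 11} for the high-frequency contribution with the completion estimates \eqref{smoothing 12}--\eqref{smoothing 15} (which dispatch low frequencies through the embedding $H^{s+\varepsilon}\hookrightarrow H^s$ and a Hörmander-multiplier argument) for the remainder; for the $\langle D\rangle^{s}L^\infty_TL^2_x$ norm, \eqref{smooth 1}--\eqref{smooth 4} and Plancherel; for the $L^{2(p-2)}_xL^\infty_T$ norm, Lemma \ref{maximum function estimate} applied with $p_s=2(p-2)$, which forces $s=\varsigma$ and so explains the threshold $s\ge\varsigma$; and for the two Besov-type mixed norms \eqref{regularity4}--\eqref{regularity5}, the analytic-interpolation Lemma \ref{interpolation estimate}, which is legitimate exactly when $\theta\in\big(\gamma/(\varsigma+\gamma),\varsigma/(\varsigma+\gamma)\big)$, so that the effective derivative orders $\theta s-(1-\theta)\gamma$, $\theta\varsigma-(1-\theta)\gamma'$, $\theta(\delta-\beta+\varsigma)-(1-\theta)\nu'$, etc., remain in the admissible ranges there. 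On the Duhamel side all of these reduce the matter to the single scalar quantity $\|\langle D\rangle^{s+\delta-\beta}g(w)\|$ measured in $L^1_TL^2_x$ and $L^2_{T,x}$, and each reduction carries a strictly positive power of $T$.

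\textbf{Nonlinear estimate (the crux).} It remains to bound $\|\langle D\rangle^{s+\delta-\beta}(|w|^{p-2}w)\|$ in those mixed norms by $\|w\|_{Z_T}^{p-1}$. Since $p$ is even, $|w|^{p-2}w=(w\bar w)^{(p-2)/2}w$ is a polynomial in $(w,\bar w)$, so the fractional Leibniz rule (Proposition \ref{Leibnitz rule}) applies, and iterating it in the style of Remark \ref{A.S.I.E} produces an analogue of \eqref{edge equation} with $m\leftrightarrow p-1$, in which the unavailable $L^\infty_x$ endpoint is bypassed and the derivative $\langle D\rangle^{s+\delta-\beta}$ is distributed as $\langle D\rangle^{\theta(s+\delta-\beta)}\langle D\rangle^{(1-\theta)(s+\delta-\beta)}$ — this is exactly why \eqref{regularity4}--\eqref{regularity5} are built into $Z_T$. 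One then checks Hölder compatibility of the exponent pairs $\big(\tfrac{2(p-2)}{1-\theta},\tfrac{2}{\theta}\big)$, $\big(\tfrac{2(p-2)}{\theta},\tfrac{2}{1-\theta}\big)$, $\big(2(p-2),\infty\big)$ and $\big(\infty,2\big)$ against the target $(p',1)$- and $(2,2)$-type exponents, the slack being absorbed by the factors $\|w\|_{L^{2(p-2)}_xL^\infty_T}^{p-3}$ and $\|\langle D\rangle^{\sigma}w\|_{L^\infty_xL^2_T}$. The difference $\Psi(w)-\Psi(v)$ is handled identically via $|g(w)-g(v)|\lesssim(|w|^{p-2}+|v|^{p-2})|w-v|$.

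\textbf{Conclusion and the main difficulty.} Collecting the positive powers of $T$ shows that, for a radius $R\simeq\|w_0\|_{H^s}$ and a time $T=T(\|w_0\|_{H^s})$ small enough, $\Psi$ maps $\{\|w\|_{Z_T}\le R\}$ into itself and contracts there; Banach's fixed-point theorem then gives the unique mild solution, $w\in C(0,T;H^s)$ follows from the $L^\infty_TH^s$ bound together with continuity of the Duhamel integral, and the stability bound $\|w-v\|_{Z_T}\lesssim\|w_0-v_0\|_{H^s}$ is read off the contraction inequality. I expect the real obstacle to be the simultaneous compatibility of all the parameters: one must exhibit a single $\theta$ in the stated interval for which (i) Lemma \ref{interpolation estimate} applies to both \eqref{regularity4} and \eqref{regularity5}, (ii) the Leibniz exponents in the analogue of \eqref{edge equation} close, and (iii) every accumulated power of $T$ is strictly positive, and then to verify that this is possible precisely under $1<\delta<3/2$, $\beta>(\delta+1)/2$, $s\ge\varsigma$, $p\ge4$ even. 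The value $\sigma=s+(3\delta-1)/4-\beta/2$ should emerge as the common exponent reconciling the gain $\langle D\rangle^{(\delta-1)/2}$ from \eqref{smoothing 1} with the derivative-loss-corrected gain from \eqref{smoothing 2} and \eqref{smoothing 9} after the $\theta$-interpolation.
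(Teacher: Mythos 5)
Your proposal follows essentially the same route as the paper's proof: the same contraction space built from the five norms \eqref{regularity1}--\eqref{regularity5}, the same splitting of the solution operator via Lemma \ref{find Mild solution} into Schr\"{o}dinger and fractional-heat pieces, the same linear estimates from Lemmas \ref{smoothing effects 1}--\ref{interpolation estimate}, and the same treatment of the nonlinearity via Proposition \ref{Leibnitz rule} and the iteration of Remark \ref{A.S.I.E}. The only detail left implicit in your sketch that the paper makes explicit is the concrete choice of auxiliary exponents $\gamma'=(5\delta-2\beta-3)/8$ and $\nu'=\gamma'-\delta+\beta$, which resolves the parameter-compatibility issue you correctly flag as the main obstacle.
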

\begin{remark}
We can choose $\frac{1+\sqrt{1+12\beta}}{6} < \alpha < 1$ such that
$\delta + 1 <2\beta<3\alpha$.
Therefore, the condition $\frac{\delta + 1}{2} < \beta$ and $\delta < \frac{3}{2}$ is reasonable.
\end{remark}
\begin{proof}
Define the operator $\Psi(w)$ as in \eqref{mild solution}. Similar to Kenig \cite[\emph{Commun. Pure Appl. Math.}]{Kenig}, without loss of generality, we only consider the case $s \in [\varsigma, \frac{3}{4})$. Once this result is established, the proof for $s \geq \frac{3}{4}$ becomes simpler, as in our estimates, the highest derivatives always appear linearly. The verification that $\Psi(w) \in C(0, T; H^{s}(\mathbb{R}))$ can be found in \cite{Grande}. Furthermore, based on Lemma \ref{find Mild solution}, up to a constant, we have that
\begin{align*}
\Psi(w)&\sim \left(e^{-itD^{\delta}} + \int_{0}^{\infty}
\frac{e^{-rtD^{\delta}}r^{\alpha-1}}
{i^{\alpha}r^{2\alpha} - 2r^{\alpha}\cos(\alpha\pi) + (-i)^{\alpha}} \, dr\right) w_{0}(x)\\\notag
&\quad + i^{-\alpha} \int_{0}^{t} \left(D^{\delta-\beta}
e^{-i(t-s)D^{\delta}} + \int_{0}^{\infty} \frac{D^{\delta-\beta}
e^{-r(t-s)D^{\delta}} r^\alpha}{i^{\alpha}r^{2\alpha} - 2r^\alpha \cos(\alpha\pi) + (-i)^{\alpha}} \, dr \right) g(w(s,x)) \, ds.
\end{align*}
Consider the following Banach space:
\[
\Theta_{T} = \left\{ w \in C\left(0, T; H^{s}(\mathbb{R})\right) : \lambda(w) = \max_{j=1,2,3,4,5} \lambda_{j}(w) < \infty \right\},
\]
where $\lambda_{1}(w)$, $\lambda_{2}(w)$, $\lambda_{3}(w)$, $\lambda_{4}(w)$, and $\lambda_{5}(w)$ are defined as in \eqref{regularity1}-\eqref{regularity5}.

Next, we sequentially estimate $\lambda_{j}(\Psi(w))$ for $j = 1, 2, 3, 4, 5$.

We choose $\gamma' = (5\delta - 2\beta - 3)/8$. Noting that $\gamma' - \delta + \beta < (\delta - 1)/2$ and setting $\nu' = \gamma' - \delta + \beta$, we have $0<\sigma-\gamma<\sigma-\gamma'<s$ and $0<\sigma-\nu<\sigma-\nu'<s+\delta-\beta$.

By \eqref{smoothing 1}, \eqref{smoothing 9}, \eqref{smoothing 10}, \eqref{smoothing 11}, \eqref{smoothing 12}, \eqref{smoothing 13}, \eqref{smoothing 14}, and \eqref{smoothing 15}, we obtain
\begin{align*}
\lambda_{1}\left(\Psi(w)\right) &\lesssim \left(1 + T^{\frac{1}{2}} + T^{\frac{\gamma - \gamma'}{\delta}}\right)\left\|w_{0}\right\|_{L^{2}_{x}}+\left\|D^{\sigma-\gamma}w_{0}\right\|_{L^{2}_{x}}+T^{\frac{\gamma-\gamma'}{\delta}}
\left\|D^{\sigma-\gamma'}w_{0}\right\|_{L^{2}_{x}}\\
&\quad+ \left(T^{\frac{1}{2}} + T + T^{\frac{\nu}{\delta} + \frac{1}{2}}+T^{\frac{\nu-\nu'}{\delta} + \frac{1}{2}}\right) \left\|(|w|^{p-2} w)\right\|_{L^{2}_{T,x}}+T^{\frac{1}{2}}\big\|D^{\sigma-\nu}(|w|^{p-2}w)\big\|_{L^{2}_{T,x}}\\
&\quad+T^{\frac{\nu-\nu'}{\delta} + \frac{1}{2}}\big\|D^{\sigma-\nu}(|w|^{p-2}w)\big\|_{L^{2}_{T,x}}\\
&\lesssim \left(1 + T^{\frac{1}{2}} + T^{\frac{\gamma - \gamma'}{\delta}}\right) \left\|\langle D \rangle^{s} w_{0}\right\|_{L^{2}_{x}} + \left(T^{\frac{1}{2}} + T + T^{\frac{\nu}{\delta} + \frac{1}{2}}+T^{\frac{\nu-\nu'}{\delta}
+ \frac{1}{2}}\right) \left\|\langle D \rangle^{s + \delta - \beta} (|w|^{p-2} w)\right\|_{L^{2}_{T,x}}.
\end{align*}

By \eqref{smooth 1}, \eqref{smooth 2}, \eqref{smooth 3}, and \eqref{smooth 4}, similar the estimate of $\lambda_{1}\left(\Psi(w)\right)$, we have
\begin{align*}
\lambda_{2}\left(\Psi(w)\right) &\lesssim \left\|\langle D \rangle^{s} w_{0}\right\|_{L^{2}_{x}} + \left(T^{\frac{1}{2}} + T^{\frac{3\alpha-2}{4}}\right) \left\|\langle D \rangle^{s + \delta - \beta} (|w|^{p-2} w)\right\|_{L^{2}_{T,x}}.
\end{align*}

By \eqref{smoothing 16}, \eqref{smoothing 17}, \eqref{smoothing 18}, and \eqref{smoothing 19}, we obtain
\begin{align*}
\lambda_{3}\left(\Psi(w)\right) &\lesssim \left\|\langle D \rangle^{s} w_{0}\right\|_{L^{2}_{x}} + \left(T^{\frac{1}{2}} + T^{\alpha - \frac{1}{2}}\right) \left\|\langle D \rangle^{s + \delta - \beta} (|w|^{p-2} w)\right\|_{L^{2}_{T,x}}.
\end{align*}

By \eqref{smoothing 20}, \eqref{smoothing 21}, \eqref{smoothing 22}, and \eqref{smoothing 23}, we have
\begin{align*}
&\lambda_{4}\left(\Psi(w)\right)\\&\lesssim \left(1 + T^{\frac{(\gamma - \gamma')\theta}{\delta}}\right) \left\|\langle D \rangle^{s - \theta \nu'} w_{0}\right\|_{L^{2}_{x}} + \left(T^{\frac{1}{2}} + T^{\frac{1-2(1-\alpha)(1-\theta)}{2}+\frac{(\nu-\nu')\theta}{\delta}}\right) \left\|\langle D \rangle^{s + \delta - \beta - \theta \nu'} (|w|^{p-2} w)\right\|_{L^{2}_{T,x}} \\
&\lesssim \left(1 + T^{\frac{(\gamma - \gamma')\theta}{\delta}}\right) \left\|\langle D \rangle^{s} w_{0}\right\|_{L^{2}_{x}} + \left(T^{\frac{1}{2}} + T^{\frac{1-2(1-\alpha)(1-\theta)}{2}+\frac{(\nu-\nu')\theta}{\delta}}\right) \left\|\langle D \rangle^{s + \delta - \beta} (|w|^{p-2} w)\right\|_{L^{2}_{T,x}}.
\end{align*}

Similarly,
\begin{align*}
&\lambda_{5}\left(\Psi(w)\right)\\
 &\lesssim\left(1 + T^{\frac{(\gamma - \gamma')(1-\theta)}{\delta}}\right) \left\|\langle D \rangle^{s} w_{0}\right\|_{L^{2}_{x}} + \left(T^{\frac{1}{2}} + T^{\frac{1-2(1-\alpha)\theta}{2}+\frac{(\nu-\nu')(1-\theta)}{\delta}}\right) \left\|\langle D \rangle^{s + \delta - \beta} (|w|^{p-2} w)\right\|_{L^{2}_{T,x}}.
\end{align*}

Noting that $s + \delta - \beta \in (0, 1)$ and $s + \delta - \beta<\sigma$, by Proposition \ref{Leibnitz rule}, we obtain
\begin{align*}
\left\|\langle D \rangle^{s + \delta - \beta} (|w|^{p-2} w)\right\|_{L^{2}_{T,x}} &\lesssim \left\|(\langle D \rangle^{s + \delta - \beta} (|w|^{p-2})) w\right\|_{L^{2}_{T,x}} + \left\|(\langle D \rangle^{s + \delta - \beta} w) |w|^{p-2}\right\|_{L^{2}_{T,x}} \\
&\quad + \left\|\langle D \rangle^{\theta (s + \delta - \beta)} w\right\|_{L^{\frac{2(p-2)}{1-\theta}}_{x} L^{\frac{2}{\theta}}_{T}} \left\|\langle D \rangle^{(1 - \theta) (s + \delta - \beta)} |w|^{p-2}\right\|_{L^{\eta}_{x} L^{\frac{2}{1-\theta}}_{T}},
\end{align*}
where $\eta = \frac{p + \theta - 3}{2(p - 2)}$. Thus, we have
\begin{align*}
\left\|\langle D \rangle^{(1 - \theta) (s + \delta - \beta)} |w|^{p-2}\right\|_{L^{\eta}_{x} L^{\frac{2}{1-\theta}}_{T}} &\lesssim \left\||w|^{p-3}\right\|_{L^{\frac{2(p-2)}{p-3}}_{x} L^{\infty}_{T}} \left\|\langle D \rangle^{(1 - \theta) (s + \delta - \beta)} w\right\|_{L^{\frac{2(p-2)}{\theta}}_{x} L^{\frac{2}{1-\theta}}_{T}} \\
&\lesssim \lambda_{3}(w)^{p-3} \lambda_{5}(w),
\end{align*}
\begin{align*}
\left\|(\langle D \rangle^{s + \delta - \beta} w) |w|^{p-2}\right\|_{L^{2}_{T,x}} &\lesssim \left\|\langle D \rangle^{s + \delta - \beta} w\right\|_{L^{\infty}_{x} L^{2}_{T}} \left\|w\right\|_{L^{2(p-2)}_{x} L^{\infty}_{T}}^{p-2} \\
&\lesssim \lambda_{1}(w) \lambda_{3}(w)^{p-2},
\end{align*}
and
\begin{align*}
\left\|(\langle D \rangle^{s + \delta - \beta} (|w|^{p-2})) w\right\|_{L^{2}_{T,x}} &\lesssim \left\|w\right\|_{L^{2(p-2)}_{x} L^{\infty}_{T}} \left\|\langle D \rangle^{s + \delta - \beta} |w|^{p-2}\right\|_{L^{\frac{2(p-2)}{p-3}}_{x} L^{2}_{T}} \\
&\lesssim \left\|w\right\|_{L^{2(p-2)}_{x} L^{\infty}_{T}}^{p-3} \left\|\langle D \rangle^{\theta (s + \delta - \beta)} w\right\|_{L^{\frac{2(p-2)}{1-\theta}}_{x} L^{\frac{2}{\theta}}_{T}} \left\|\langle D \rangle^{(1 - \theta) (s + \delta - \beta)} w\right\|_{L^{\frac{2(p-2)}{\theta}}_{x} L^{\frac{2}{1-\theta}}_{T}} \\
&\quad + \left\|w\right\|_{L^{2(p-2)}_{x} L^{\infty}_{T}}^{p-2} \left\|\langle D \rangle^{s + \delta - \beta} w\right\|_{L^{\infty}_{x} L^{2}_{T}} \\
&\lesssim \lambda_{3}(w)^{p-3} \lambda_{4}(w) \lambda_{5}(w) + \lambda_{3}(w)^{p-2} \lambda_{1}(w),
\end{align*}
where we have used Remark \ref{A.S.I.E}. Therefore, summarizing the above estimates, we have
\begin{align*}
\left\|\langle D \rangle^{s + \delta - \beta} (|w|^{p-2} w)\right\|_{L^{2}_{T,x}} &\lesssim \lambda_{3}(w)^{p-3} \lambda_{4}(w) \lambda_{5}(w) + \lambda_{3}(w)^{p-2} \lambda_{1}(w) + \lambda_{1}(w) \lambda_{3}(w)^{p-2} \\
&\quad + \lambda_{3}(w)^{p-3} \lambda_{4}(w) \lambda_{5}(w) \\
&\lesssim \lambda(w)^{p-1}.
\end{align*}

Noting that $w_{0} \in H^{s}(\mathbb{R})$, there exists $R > 0$ such that $\left\|w_{0}\right\|_{H^{s}(\mathbb{R})} \leq R/(2C)$. We can choose $0 < T < 1$ such that
\[
2C \big(T + T^{\frac{1}{2}} + T^{\frac{3\alpha-2}{4}}+ T^{\alpha - \frac{1}{2}} + T^{\frac{1}{2} + \frac{\nu}{\delta}} + T^{\frac{1}{2} + \frac{\nu-\nu'}{\delta}}+ T^{\frac{1-2(1-\alpha)(1-\theta)}{2}+\frac{(\nu-\nu')\theta}{\delta}} + T^{\frac{1-2(1-\alpha)\theta}{2}+\frac{(\nu-\nu')(1-\theta)}{\delta}}\big) R^{p-2} < 1.
\]

Considering the closed ball $B(R) \subset \Theta_{T}$, we have
\begin{align*}
&\lambda\left(\Psi(w)\right)\\ &\leq C \frac{R}{2C}+\\
&\quad +  C \left(T + T^{\frac{1}{2}}
+ T^{\frac{3\alpha-2}{4}}+ T^{\alpha - \frac{1}{2}} + T^{\frac{1}{2} + \frac{\nu-\nu'}{\delta}} + T^{\frac{1-2(1-\alpha)(1-\theta)}{2}+\frac{(\nu-\nu')\theta}{\delta}} + T^{\frac{1-2(1-\alpha)\theta}{2}+\frac{(\nu-\nu')(1-\theta)}{\delta}}\right) R^{p-2} \cdot R \\
&\leq \frac{R}{2} + \frac{R}{2} = R.
\end{align*}

Furthermore, for any $v, w \in B(R)$, similar the above process, we obtain
\begin{align*}
&\lambda\left(\Psi(v) - \Psi(w)\right) \\&\leq 2C \left(T + T^{\frac{1}{2}}
+ T^{\frac{3\alpha-2}{4}}+ T^{\alpha - \frac{1}{2}} + T^{\frac{1}{2} + \frac{\nu-\nu'}{\delta}} + T^{\frac{1-2(1-\alpha)(1-\theta)}{2}+\frac{(\nu-\nu')\theta}{\delta}} + T^{\frac{1-2(1-\alpha)\theta}{2}+\frac{(\nu-\nu')(1-\theta)}{\delta}}\right)\\
&\quad\cdot \left(\lambda(v)^{p-2} + \lambda(w)^{p-2}\right) \lambda(v - w) \\
&< \lambda(v - w).
\end{align*}

Therefore, by the Banach fixed-point theorem, equation \eqref{Eq:F-T-S-E} has a unique mild solution $w \in C(0, T; H^{s}(\mathbb{R}))$ that satisfies \eqref{regularity1}-\eqref{regularity5}. The proof is completed now.
\end{proof}

\subsection{Asymptotic behavior and Self-similar solutions}
In this section, we construct the asymptotic behavior and self-similar solutions about E.q. \eqref{Eq:F-T-S-E}.
\begin{theorem}\label{asymptotic behavior}
Let $v$ and $w$ be the global solutions of Equation \eqref{Eq:F-T-S-E} as determined by Theorem \ref{global existence}, with corresponding initial values $v_{0}$ and $w_{0}$, respectively. Then,
\begin{align*}
\lim_{t\rightarrow\infty}t^{\kappa}\left\|w(t)-v(t)\right\|_{L^{p_{0}}}=0 \quad \text{if and only if} \quad \lim_{t\rightarrow\infty}t^{\kappa}\left\|E_{\alpha,1}((-it)^{\alpha}D^{\beta})(w_{0}-v_{0})\right\|_{L^{p_{0}}}=0.
\end{align*}
\end{theorem}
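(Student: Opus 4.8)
The plan is to exploit the mild-solution representation from Lemma \ref{find Mild solution} together with the dispersive estimate \eqref{Mittag-Leffer estimates 2} of Lemma \ref{Mittag-Leffer operator estimate}, reducing the asymptotics of the nonlinear difference $w-v$ to the asymptotics of the linear difference $E_{\alpha,1}((-it)^{\alpha}D^{\beta})(w_{0}-v_{0})$. Write $\Phi(t)=t^{\kappa}\|w(t)-v(t)\|_{L^{p_{0}}}$ and $\Phi_{0}(t)=t^{\kappa}\|E_{\alpha,1}((-it)^{\alpha}D^{\beta})(w_{0}-v_{0})\|_{L^{p_{0}}}$. Since both $w$ and $v$ solve the integral equation \eqref{mild solution}, subtracting gives
\begin{align*}
w(t)-v(t) &= E_{\alpha,1}((-it)^{\alpha}D^{\beta})(w_{0}-v_{0})\\
&\quad + i^{-\alpha}\int_{0}^{t}(t-s)^{\alpha-1}E_{\alpha,\alpha}((-i(t-s))^{\alpha}D^{\beta})\big(g(w(s))-g(v(s))\big)\,ds.
\end{align*}
Applying the $L^{p_{0}'}\!\to\! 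L^{p_{0}}$ bound \eqref{Mittag-Leffer estimates 2} to the Duhamel term, then the pointwise Lipschitz estimate $|g(w)-g(v)|\lesssim (|w|^{p_{0}-2}+|v|^{p_{0}-2})|w-v|$ followed by H\"older's inequality exactly as in the proof of Theorem \ref{global existence}, yields
\[
\Phi(t)\le \Phi_{0}(t) + C\,\varepsilon^{p_{0}-2}\, t^{\kappa}\!\int_{0}^{t}(t-s)^{\alpha-1-\frac{\alpha d}{\beta}(\frac{1}{p_{0}'}-\frac{1}{p_{0}})}s^{-\kappa(p_{0}-1)}\sup_{0<\tau\le s}\!\Big(\tau^{\kappa}\|w(\tau)-v(\tau)\|_{L^{p_{0}}}\Big)\frac{ds}{s^{0}}.
\]

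The necessity direction ($\Rightarrow$) is immediate: from the same identity, $\Phi_{0}(t)\le \Phi(t) + C\varepsilon^{p_{0}-2}\|w-v\|_{X_{p_{0}}^{\kappa}}$ does not vanish by itself, so instead one rearranges to bound $\Phi_{0}(t)$ by $\Phi(t)$ plus a Duhamel tail and shows the tail is $o(1)$; alternatively, and more cleanly, note that the linear term equals $(w-v)$ minus the Duhamel term, and the Duhamel term's $L^{p_0}$-norm times $t^\kappa$ tends to $0$ whenever $\Phi\to 0$ (by the dominated-convergence argument below applied with $\sup_{\tau\le s}\Phi(\tau)\to0$), hence $\Phi_0\le \Phi + o(1)\to 0$. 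For the sufficiency direction ($\Leftarrow$), set $\ell=\limsup_{t\to\infty}\Phi(t)$, which is finite since $w-v\in X_{p_{0}}^{\kappa}$. Split the Duhamel integral at $s=t/2$. On $[t/2,t]$ one has $\sup_{\tau\le s}\Phi(\tau)$ close to $\ell$ for large $t$ and the $s$-integral contributes $C\varepsilon^{p_{0}-2}B_{\kappa}<\tfrac12$ (the same Beta-function constant that made the contraction work in Theorem \ref{global existence}); on $[0,t/2]$ the factor $t^{\kappa}(t-s)^{\alpha-1-\frac{\alpha d}{\beta}(\frac1{p_0'}-\frac1{p_0})}\to 0$ uniformly while $\int_0^{t/2}s^{-\kappa(p_0-1)}\,ds$ times $\|w-v\|_{X_{p_0}^\kappa}^{p_0-1}$ stays controlled, so that piece is $o(1)$. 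Taking $\limsup_{t\to\infty}$ through the inequality gives $\ell \le 0 + \tfrac12\ell + 0$, forcing $\ell=0$, i.e. $\Phi(t)\to 0$.

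The main obstacle is handling the low-time portion $[0,t/2]$ of the Duhamel integral carefully: one must verify that $\kappa(p_{0}-1)<1$ (an assumed hypothesis of Theorem \ref{global existence}) makes $\int_0^{t/2}s^{-\kappa(p_0-1)}ds = C\,t^{1-\kappa(p_0-1)}$ integrable at $0$ and that, after multiplying by $t^{\kappa}\sup_{t/2\le s\le t}(t-s)^{\alpha-1-\frac{\alpha d}{\beta}(\frac1{p_0'}-\frac1{p_0})}$, which behaves like $t^{\kappa}\cdot t^{\alpha-1-\frac{\alpha d}{\beta}(\frac1{p_0'}-\frac1{p_0})}$, the combined power of $t$ is strictly negative — this is precisely the scaling identity $\kappa = \frac{\alpha}{\beta}(\frac{\beta}{p_0-2}-\frac{d}{p_0})$ together with $\alpha - \frac{\alpha d}{\beta}(\frac1{p_0'}-\frac1{p_0}) = -\kappa(p_0-1)+1$, which one reads off from the computation of $B_\kappa$ in Theorem \ref{global existence}. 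A minor technical point is that to pass the $\limsup$ through the $[t/2,t]$ integral one should bound $\sup_{\tau\le s}\Phi(\tau)\le \sup_{\tau\ge t/2}\Phi(\tau)+\|w-v\|_{X_{p_0}^\kappa}^{p_0-1}\chi_{\{\text{small}\}}$ and use that $\sup_{\tau\ge t/2}\Phi(\tau)\downarrow\ell$; this is routine once the integral constant is pinned below $\tfrac12$ by smallness of $\varepsilon$.
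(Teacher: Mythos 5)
Your overall architecture matches the paper's (subtract the two Duhamel formulas, apply \eqref{Mittag-Leffer estimates 2}, split the time integral, and run a $\limsup$ absorption with the smallness of $\varepsilon$), but the sufficiency direction as written contains two concrete errors. First, the scaling claim for the $[0,t/2]$ piece is false: with $a=\alpha-\frac{\alpha d}{\beta}\bigl(\frac{1}{p_0'}-\frac{1}{p_0}\bigr)$ one has $a=\kappa(p_0-2)$ (not $1-\kappa(p_0-1)$), so the combined power of $t$ in
$t^{\kappa}\cdot t^{a-1}\cdot t^{1-\kappa(p_0-1)}$ is $\kappa+\kappa(p_0-2)-\kappa(p_0-1)=0$, \emph{exactly} zero — the very scale invariance you invoke forces this. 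Hence the low-time piece is $O(1)$, of size $C\varepsilon^{p_0-2}\|w-v\|_{X_{p_0}^{\kappa}}^{p_0-1}\int_0^{1/2}(1-\sigma)^{a-1}\sigma^{-\kappa(p_0-1)}d\sigma$, not $o(1)$, and your final inequality degenerates to $\ell\le C+\frac12\ell$ rather than $\ell\le\frac12\ell$. The paper avoids this by splitting at $s=\varrho t$ and sending $\varrho\to0^{+}$ at the end, so that the low-time constant $H(\varrho)=\int_0^{\varrho}(1-\sigma)^{a-1}\sigma^{-\kappa(p_0-1)}d\sigma$ can be made arbitrarily small uniformly in $t$; equivalently, after the substitution $s=t\sigma$ one may apply reverse Fatou with the integrable dominant $\|w-v\|_{X_{p_0}^{\kappa}}(1-\sigma)^{a-1}\sigma^{-\kappa(p_0-1)}$ to put the factor $\limsup_{t}\Phi(t\sigma)\le\ell$ under the whole integral. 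A fixed split at $t/2$ cannot close the argument.

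Second, replacing $s^{\kappa}\|w(s)-v(s)\|_{L^{p_0}}$ by the running supremum $\sup_{0<\tau\le s}\Phi(\tau)$ destroys the decay you are trying to propagate: the running sup reaches back to $\tau$ near $0$ and never tends to $\ell$ (nor to $0$ in the necessity direction), so the claim that it is ``close to $\ell$'' on $[t/2,t]$ is wrong, and the proposed repair $\sup_{\tau\le s}\Phi(\tau)\le\sup_{\tau\ge t/2}\Phi(\tau)+\cdots$ is not a valid inequality. Keep $\Phi(s)$ evaluated at the integration variable; then for $s\in[\varrho t,t]$ one has $s\ge\varrho t\to\infty$ and $\limsup_{t\to\infty}\sup_{s\in[\varrho t,t]}\Phi(s)\le\ell$, which is all that is needed. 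With these two repairs your argument becomes the paper's proof. The necessity direction is fine in spirit but inherits the same two defects, since it relies on the same bound for the Duhamel term.
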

\begin{proof}
On the hand, if $\lim_{t\rightarrow\infty}t^{\kappa}\left\|E_{\alpha,1}((-it)^{\alpha}D^{\beta})(w_{0}-v_{0})\right\|_{L^{p_{0}}}=0$, for any $0<\varrho<1$, we can get that
\begin{align*}
&t^{\kappa}\left\|w(t)-v(t)\right\|_{L^{p_{0}}}\\
&\lesssim t^{\kappa}\left\|E_{\alpha,1}((-it)^{\alpha}D^{\beta})(w_{0}-v_{0})\right\|_{L^{p_{0}}}\\
&\quad +
t^{\kappa}\int_{0}^{t}(t-s)^{\alpha-1}\left\|E_{\alpha,\alpha}((-it)^{\alpha}D^{\beta})
\left(\left(|w(s)|^{p_{0}-2}+|v(s)|^{p_{0}-2}\right)(w(s)-v(s))\right)\right\|_{L^{p_{0}}}ds\\
&\lesssim t^{\kappa}\left\|E_{\alpha,1}((-it)^{\alpha}D^{\beta})(w_{0}-v_{0})\right\|_{L^{p_{0}}}\\
&\quad +t^{\kappa}\left(\int_{0}^{\varrho t}+\int_{\varrho t}^{t}\right)(t-s)^{\alpha-1-\frac{\alpha d}{\beta}(1-\frac{2}{p_{0}})}\left\|
\left(|w(s)|^{p_{0}-2}+|v(s)|^{p_{0}-2}\right)(w(s)-v(s))\right\|_{L^{p_{0}'}}ds\\
&\lesssim t^{\kappa}\left\|E_{\alpha,1}((-it)^{\alpha}D^{\beta})(w_{0}-v_{0})\right\|_{L^{p_{0}}}\\
&\quad +t^{\kappa}\varepsilon^{p_{0}-2}\left(\int_{0}^{\varrho t}+\int_{\varrho t}^{t}\right)(t-s)^{\alpha-1-\frac{\alpha d}{\beta}(1-\frac{2}{p_{0}})}s^{-\kappa(p_{0}-1)}\left(s^{\kappa}\left\|w(s)-v(s)\right\|_{L^{p_{0}}}\right)ds\\
&\lesssim t^{\kappa}\left\|E_{\alpha,1}((-it)^{\alpha}D^{\beta})(w_{0}-v_{0})\right\|_{L^{p_{0}}}\\
&\quad +t^{\kappa}\varepsilon^{p_{0}-2}\left(\int_{0}^{\varrho}+\int_{\varrho}^{1}\right)(1-s)^{\alpha-1-\frac{\alpha d}{\beta}(1-\frac{2}{p_{0}})}s^{-\kappa(p_{0}-1)}\left((ts)^{\kappa}\left\|w(ts)-v(ts)\right\|_{L^{p_{0}}}\right)ds,
\end{align*}
hence we have that
\begin{align*}
\overline{\lim_{t\rightarrow\infty}}t^{\kappa}\left\|w(t)-v(t)\right\|_{L^{p_{0}}}\leq C\varepsilon^{p_{0}-2}\left(1+H(\varrho)\right)\overline{\lim_{t\rightarrow\infty}}t^{\kappa}\left\|w(t)-v(t)\right\|_{L^{p_{0}}},
\end{align*}
note that the constant $\varepsilon$ is small and $\lim_{\varrho\rightarrow 0^{+}}H(\varrho)=0$, we get that
$$
\lim_{t\rightarrow\infty}t^{\kappa}\left\|w(t)-v(t)\right\|_{L^{p_{0}}}=0.
$$
On the other hand, if $\lim_{t\rightarrow\infty}t^{\kappa}\left\|w(t)-v(t)\right\|_{L^{p_{0}}}=0$, note that
\begin{align*}
&t^{\kappa}\left\|E_{\alpha,1}((-it)^{\alpha}D^{\beta})(w_{0}-v_{0})\right\|_{L^{p_{0}}}\\
&\lesssim t^{\kappa}\left\|w(t)-v(t)\right\|_{L^{p_{0}}}+t^{\kappa}\int_{0}^{t}(t-s)^{\alpha-1}\left\|E_{\alpha,\alpha}((-it)^{\alpha}D^{\beta})
\left(\left(|w(s)|^{p_{0}-2}+|v(s)|^{p_{0}-2}\right)(w(s)-v(s))\right)\right\|_{L^{p_{0}}}ds\\
&\leq C\varepsilon^{p_{0}-2}\left(1+H(\varrho)\right)\overline{\lim_{t\rightarrow \infty}}t^{\kappa}\left\|w(t)-v(t)\right\|_{L^{p_{0}}},
\end{align*}
hence we obtain that
$$
\overline{\lim_{t\rightarrow\infty}}t^{\kappa}\left\|E_{\alpha,1}((-it)^{\alpha}D^{\beta})(w_{0}-v_{0})\right\|_{L^{p_{0}}}=0.
$$
The proof is now completed.
\end{proof}
\begin{theorem}\label{self-similar solution}
 Assume the condition Theorem {\rm\ref{global existence}} is satisfied and the initial value $w_{0}$ is a homogeneous function with homogeneity degree $-\frac{\beta}{p_{0}-2}$ and satisfies $\left\|E_{\alpha,1}((-i)^{\alpha}D^{\beta})w_{0}\right\|_{L^{p_{0}}}$ being sufficiently small. Then, the equation \eqref{Eq:F-T-S-E} admits a self-similar solution in $X_{p_{0}}^{\kappa}$, i.e., for all $\lambda > 0$,
\[
w(x,t) = \lambda^{\frac{\beta}{p-2}} w(\lambda x, \lambda^{\delta} t) \quad \text{a.e. } x \in \mathbb{R}^{d}, t > 0.
\]
\end{theorem}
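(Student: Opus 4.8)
The plan is to combine the scaling structure of equation \eqref{Eq:F-T-S-E} with the uniqueness part of Theorem \ref{global existence}. First I would record the scaling invariance: if $w$ solves \eqref{Eq:F-T-S-E} with nonlinearity $g(w)=|w|^{p_0-2}w$ and initial data $w_0$, then for every $\lambda>0$ the rescaled function $w_\lambda(x,t)=\lambda^{\frac{\beta}{p_0-2}}w(\lambda x,\lambda^{\delta}t)$ also solves \eqref{Eq:F-T-S-E}, with initial data $(w_0)_\lambda(x)=\lambda^{\frac{\beta}{p_0-2}}w_0(\lambda x)$. This is checked directly on the mild formulation \eqref{mild solution}: the Caputo derivative $\partial_t^\alpha$ scales like $\lambda^{\delta\alpha}=\lambda^{\beta}$ under $t\mapsto\lambda^\delta t$, the fractional Laplacian $(-\Delta)^{\beta/2}$ scales like $\lambda^\beta$ under $x\mapsto\lambda x$, and the nonlinearity scales like $\lambda^{\beta}$ precisely because the homogeneity exponent of $w$ in $x$ was chosen to be $\beta/(p_0-2)$ so that $|w_\lambda|^{p_0-2}w_\lambda=\lambda^{\beta}\big(|w|^{p_0-2}w\big)(\lambda x,\lambda^\delta t)$. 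Equivalently, and more cleanly, one verifies that the symbols $E_{\alpha,1}((-it)^\alpha|\xi|^\beta)$ and $(t-s)^{\alpha-1}E_{\alpha,\alpha}((-i(t-s))^\alpha|\xi|^\beta)$ appearing in \eqref{mild solution} are invariant under the simultaneous substitution $t\mapsto\lambda^\delta t$, $\xi\mapsto\lambda^{-1}\xi$ up to the factor $\lambda^{\alpha\beta/\alpha}=\lambda^{\beta}$ for the Duhamel term, and then conjugating by the $L^{p_0}$-scaling of $w$.

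Next I would check that the space $X_{p_0}^\kappa$ is scale invariant with exactly this exponent. A direct computation with $\kappa=\frac{\alpha}{\beta}\big(\frac{\beta}{p_0-2}-\frac{d}{p_0}\big)$ gives
\begin{align*}
\sup_{0<t<\infty}t^{\kappa}\|w_\lambda(t)\|_{L^{p_0}}
&=\sup_{0<t<\infty}t^{\kappa}\lambda^{\frac{\beta}{p_0-2}-\frac{d}{p_0}}\|w(\lambda^\delta t)\|_{L^{p_0}}
=\sup_{0<\tau<\infty}(\lambda^{-\delta}\tau)^{\kappa}\lambda^{\delta\kappa}\|w(\tau)\|_{L^{p_0}}
=\|w\|_{X_{p_0}^\kappa},
\end{align*}
using $\frac{\beta}{p_0-2}-\frac{d}{p_0}=\delta\kappa$. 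Similarly the initial data norm is invariant: $\|(w_0)_\lambda\|_{X_0}=\|w_0\|_{X_0}$, since $E_{\alpha,1}((-it)^\alpha D^\beta)$ commutes with this scaling in the same way. Because $w_0$ is homogeneous of degree $-\beta/(p_0-2)$, we have $(w_0)_\lambda=w_0$ for all $\lambda>0$; combined with the smallness hypothesis $\|E_{\alpha,1}((-i)^\alpha D^\beta)w_0\|_{L^{p_0}}$ small — which, after undoing the $t=1$ normalization via scaling, is equivalent to $\|w_0\|_{X_0}\lesssim\varepsilon$ — the existence part of Theorem \ref{global existence} produces a unique global mild solution $w\in X_{p_0}^\kappa$ with data $w_0$.

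The final step is the uniqueness argument. Since $(w_0)_\lambda=w_0$, the function $w_\lambda$ is a global mild solution of \eqref{Eq:F-T-S-E} with the same initial data $w_0$, and by the invariance just established $w_\lambda\in X_{p_0}^\kappa$ with $\|w_\lambda\|_{X_{p_0}^\kappa}=\|w\|_{X_{p_0}^\kappa}\lesssim\varepsilon$, so $w_\lambda$ lies in the same ball $\Xi_\varepsilon$ on which the contraction mapping $\Psi$ has a unique fixed point. Hence $w_\lambda=w$ for every $\lambda>0$, which is exactly the self-similarity identity $w(x,t)=\lambda^{\frac{\beta}{p_0-2}}w(\lambda x,\lambda^\delta t)$ a.e. The main obstacle — really the only point requiring care — is the bookkeeping in the first step: one must confirm that the Mittag-Leffler solution operators genuinely respect the parabolic-type scaling $t\sim x^\delta$ (equivalently that $z=(-it)^\alpha|\xi|^\beta$ is the natural scale-invariant variable), and that the $i^{-\alpha}$ and $(t-s)^{\alpha-1}$ factors in the Duhamel term contribute the correct power of $\lambda$; everything else is a routine matching of exponents against the definition of $\kappa$.
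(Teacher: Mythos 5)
Your argument is correct, but it takes a different logical route from the paper. The paper first proves the scaling identity $E_{\alpha,1}((-it)^{\alpha}D^{\beta})w_{0}(x)=\lambda^{\frac{\beta}{p_{0}-2}}E_{\alpha,1}((-i)^{\alpha}(\lambda^{\delta}t)^{\alpha}D^{\beta})w_{0}(\lambda x)$ (which also shows $t^{\kappa}\|E_{\alpha,1}((-it)^{\alpha}D^{\beta})w_{0}\|_{L^{p_{0}}}$ is independent of $t$, justifying the smallness hypothesis), and then proves by induction that every Picard iterate $\overline{w}_{n}$ is self-similar, passing to the limit in $X_{p_{0}}^{\kappa}$. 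You instead show that the rescaled function $w_{\lambda}$ is itself a global mild solution with the same data $w_{0}$ (using $(w_{0})_{\lambda}=w_{0}$ by homogeneity and the scale invariance of the $X_{p_{0}}^{\kappa}$-norm, so that $w_{\lambda}$ lies in the same ball $\Xi_{\varepsilon}$), and conclude $w_{\lambda}=w$ from uniqueness of the fixed point of $\Psi$. The two proofs rest on the same core computation — the invariance of the symbols $E_{\alpha,1}((-it)^{\alpha}|\xi|^{\beta})$ and $(t-s)^{\alpha-1}E_{\alpha,\alpha}((-i(t-s))^{\alpha}|\xi|^{\beta})\,ds$ under $t\mapsto\lambda^{\delta}t$, $\xi\mapsto\lambda^{-1}\xi$ up to the factor $\lambda^{\beta}=\lambda^{\delta\alpha}$ — but your uniqueness argument avoids the induction on iterates and is the more economical of the two, at the cost of having to verify once that the full Duhamel map intertwines with the rescaling (which the paper only checks on self-similar inputs). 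One small slip in your bookkeeping: with $w_{\lambda}(x,t)=\lambda^{\frac{\beta}{p_{0}-2}}w(\lambda x,\lambda^{\delta}t)$ one has $|w_{\lambda}|^{p_{0}-2}w_{\lambda}=\lambda^{\frac{\beta(p_{0}-1)}{p_{0}-2}}\bigl(|w|^{p_{0}-2}w\bigr)(\lambda x,\lambda^{\delta}t)$, i.e.\ the nonlinearity gains $\lambda^{\beta}$ relative to the rescaling of $w$ itself, not the bare factor $\lambda^{\beta}$ you wrote; this extra $\lambda^{\beta}$ is then cancelled by the $\lambda^{-\delta\alpha}$ produced by the substitution $s\mapsto\lambda^{\delta}s$ in the Duhamel integral, exactly as in the paper's computation for $\overline{w}_{1}$, so the conclusion is unaffected.
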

\begin{proof}
Notice that $w_{0}(\lambda x) = \lambda^{-\frac{\beta}{p_{0}-2}} w_{0}(x)$. We can derive
\[
\widehat{w}_{0}(\lambda \xi) = \lambda^{-d + \frac{\beta}{p_{0}-2}} \widehat{w}_{0}(\xi),
\]
and consequently,
\begin{align}\label{Mitleffer self similar}
E_{\alpha,1}((-it)^{\alpha}D^{\beta})w_{0}(x) &= \int_{\mathbb{R}^{d}} E_{\alpha,1}((-it)^{\alpha} |\xi|^{\beta}) \widehat{w}_{0}(\xi) e^{ix \xi} d\xi \quad (\lambda \zeta = \xi) \notag \\
&= \lambda^{d} \int_{\mathbb{R}^{d}} E_{\alpha,1}((-i)^{\alpha} (\lambda^{\delta} t)^{\alpha} |\zeta|^{\beta}) \widehat{w}_{0}(\lambda \zeta) e^{i \lambda x \cdot \zeta} d\zeta \notag \\
&= \lambda^{\frac{\beta}{p_{0}-2}} E_{\alpha,1}((-i)^{\alpha} (\lambda^{\delta} t)^{\alpha} D^{\beta}) w_{0}(\lambda x).
\end{align}
Thus, for any $t > 0$,
\begin{align*}
t^{\kappa} \left\|E_{\alpha,1}((-it)^{\alpha} D^{\beta}) w_{0}(\cdot)\right\|_{L^{p_{0}}} &= \lambda^{\frac{\beta}{p_{0}-2}} t^{\kappa} \left\|E_{\alpha,1}((-i)^{\alpha} (\lambda^{\delta} t)^{\alpha} D^{\beta}) w_{0}(\lambda \cdot)\right\|_{L^{p_{0}}} \\
&= t^{\kappa} t^{-\frac{\alpha}{p_{0}-2} + \frac{\alpha d}{p \beta}} \left\|E_{\alpha,1}((-i)^{\alpha} D^{\beta}) w_{0}(\cdot)\right\|_{L^{p_{0}}} \\
&= \left\|E_{\alpha,1}((-i)^{\alpha} D^{\beta}) w_{0}(\cdot)\right\|_{L^{p_{0}}}.
\end{align*}
Therefore, when $\left\|E_{\alpha,1}((-i)^{\alpha}D^{\beta})w_{0}(\cdot)\right\|_{L^{p_{0}}}$ is sufficiently small, by Theorem \ref{global existence} and the Banach fixed-point theorem, the equation \eqref{Eq:F-T-S-E} has a unique global solution $w \in X_{p_{0}}^{\kappa}$. On the other hand, the fixed point $w \in X_{p_{0}}^{\kappa}$ must be the limit of the following Picard iteration sequence:
\begin{align*}
\overline{w}_{0}(t,x) &= E_{\alpha,1}((-it)^{\alpha} D^{\beta}) w_{0}(x), \\
\overline{w}_{1}(t,x) &= \overline{w}_{0}(t,x) + \int_{0}^{t} (t-s)^{\alpha-1} E_{\alpha,\alpha}((-i(t-s))^{\alpha} D^{\beta}) |\overline{w}_{0}(s,x)|^{p_{0}-2} \overline{w}_{0}(s,x) ds, \\
\overline{w}_{2}(t,x) &= \overline{w}_{0}(t,x) + \int_{0}^{t} (t-s)^{\alpha-1} E_{\alpha,\alpha}((-i(t-s))^{\alpha} D^{\beta}) |\overline{w}_{1}(s,x)|^{p_{0}-2} \overline{w}_{1}(s,x) ds, \\
&\vdots \\
\overline{w}_{n}(t,x) &= \overline{w}_{0}(t,x) + \int_{0}^{t} (t-s)^{\alpha-1} E_{\alpha,\alpha}((-i(t-s))^{\alpha} D^{\beta}) |\overline{w}_{n-1}(s,x)|^{p_{0}-2} \overline{w}_{n-1}(s,x) ds.
\end{align*}
Clearly, by \eqref{Mitleffer self similar}, $\overline{v}_{0}(t,x)$ satisfies the self-similar property, i.e.,
\[
\overline{w}_{0}(t,x) = \lambda^{\frac{\beta}{p_{0}-2}} \overline{w}_{0}(\lambda^{\delta} t, \lambda x).
\]
Furthermore, we can similarly obtain
\begin{align*}
&\overline{w}_{1}(t,x) \\
&= \overline{w}_{0}(t,x) + \int_{0}^{t} (t-s)^{\alpha-1} E_{\alpha,\alpha}((-i(t-s))^{\alpha} D^{\beta}) |\overline{w}_{0}(s,x)|^{p_{0}-2} \overline{w}_{0}(s,x) ds \\
&= \lambda^{\frac{\beta}{p_{0}-2}} \overline{w}_{0}(\lambda^{\delta} t, \lambda x) + \lambda^{\frac{\beta(p_{0}-1)}{p_{0}-2}} \int_{0}^{t} (t-s)^{\alpha-1} E_{\alpha,\alpha}((-i(t-s))^{\alpha} D^{\beta}) |\overline{w}_{0}(\lambda^{\delta} s, \lambda x)|^{p_{0}-2} \overline{w}_{0}(\lambda^{\delta} s, \lambda x) ds \\
&= \lambda^{\frac{\beta}{p_{0}-2}} \overline{w}_{0}(\lambda^{\delta} t, \lambda x) \\
&\quad + \lambda^{\frac{\beta(p_{0}-1)}{p_{0}-2} - \beta} \int_{0}^{t} (\lambda^{\delta} (t-s))^{\alpha-1} E_{\alpha,\alpha}((-i \lambda^{\beta} (t-s))^{\alpha} D^{\beta}) |\overline{w}_{0}(\lambda^{\delta} s, \lambda x)|^{p_{0}-2} \overline{w}_{0}(\lambda^{\delta} s, \lambda x) d(\lambda^{\delta} s) \\
&= \lambda^{\frac{\beta}{p_{0}-2}} \overline{w}_{0}(\lambda^{\delta} t, \lambda x) + \lambda^{\frac{\beta}{p_{0}-2}} \int_{0}^{\lambda^{\delta} t} (\lambda^{\delta} t - s)^{\alpha-1} E_{\alpha,\alpha}((-i (\lambda^{\delta} t - s))^{\alpha} D^{\beta}) |\overline{w}_{0}(s, \lambda x)|^{p_{0}-2} \overline{w}_{0}(s, \lambda x) ds \\
&= \lambda^{\frac{\beta}{p_{0}-2}} \overline{w}_{1}(\lambda^{\delta} t, \lambda x).
\end{align*}
This shows that $\overline{w}_{1}(t,x)$ also satisfies the self-similar property. Similarly, for any $n \in \mathbb{N}$,
\[
\overline{w}_{n}(t,x) = \lambda^{\frac{\beta}{p_{0}-2}} \overline{w}_{n}(\lambda^{\delta} t, \lambda x).
\]
Consequently, in the topology of $X_{p_{0}}^{\kappa}$,
\[
w(t,x) = \lim_{n \to \infty} \overline{w}_{n}(t,x) = \lim_{n \to \infty} \lambda^{\frac{\beta}{p_{0}-2}} \overline{w}_{n}(\lambda^{\delta} t, \lambda x) = \lambda^{\frac{\beta}{p_{0}-2}} w(\lambda^{\delta} t, \lambda x).
\]
The proof is completed now.
\end{proof}
\section{Conclusion}
In this paper, we consider the local and global well-posedness of the \(\mu=\alpha\) type space-time fractional Schr\"{o}dinger equation with power-type nonlinearity, where the linear part is consistent with the equation proposed by Naber. Due to the derivative loss in the solution operator, this equation is fundamentally different from the \(\mu=1\) type space-time fractional Schr\"{o}dinger equation studied in \cite{Su2}. We adopt a strategy similar to the smoothing effects used by Kenig et al. in their study of the Korteweg-de Vries equation, combined with the fractional Leibniz rule established by Kenig and other harmonic analysis methods, to establish the local well-posedness of E.q.\eqref{Eq:F-T-S-E} in $C(0,T;H^{s})$ when \(\beta < 2\) and \(d = 1\). It is worth noting that this method is only effective for \(d = 1\), and due to the limitations imposed by the fractional heat kernel, the global well-posedness for \(\beta < 2\) remains unclear, which is worthy of further discussion. Furthermore, because the H\"{o}rmander multiplier theorem is not applicable due to the derivative loss in the solution operator, we generalize a regularity result for oscillatory integrals established by Ponce in the study of the Korteweg-de Vries equation equation to higher dimensions. By combining real interpolation techniques, we establish $\mathcal{H}^{b}-\mathcal{H}^{a}$ estimates for the operator $D^{\delta-\beta}\exp(-it^{\alpha}D^{\delta})$. Subsequently, combined with harmonic analysis methods such as time-space estimates for the fractional heat kernel, we prove the global and local well-posedness of mild solutions in the space $X_{p_{0}}^{\kappa}$ for space dimensions $d \geq 1$ and $\beta > 2$. On the other hand, adjusting the value of $\alpha$ to $\alpha\in (1,2)$ allows establishing a connection or transition between the Schr\"{o}dinger equation and the classical wave equation, while ensuring that the resulting equation remains within the class of dispersive equations. These equations are characterized by the distinctive phenomenon where different frequency components of waves propagate at distinct speeds. Consequently, in subsequent research, we will investigate the existence of solutions for Equation \eqref{Eq:F-T-S-E} with $\alpha\in(1,2)$.

\noindent{\bf Declaration of competing interest}\\
The authors declare that they have no competing interests.\\
\noindent{\bf Data availability}\\
No data was used for the research described in the article.\\
\noindent{\bf Acknowledgements}\\
This work was supported by National Natural Science Foundation of China (12471172), Fundo para o Desenvolvimento das Ci\^{e}ncias e da Tecnologia of Macau (No. 0092/2022/A) and Hunan Province Doctoral Research Project CX20230633.

\end{document}